\documentclass[12pt]{amsart}
\usepackage[margin=2.5 cm]{geometry}
\usepackage{amsmath}
\usepackage{graphicx}
\usepackage{booktabs}
\usepackage{amsfonts}
\usepackage{amssymb}
\usepackage{xcolor}
\renewenvironment{proof}{{\bfseries Proof.}}{\qed}
\numberwithin{equation}{section} 
\theoremstyle{theorem}
\newtheorem{theorem}{Theorem}[section] 
\newtheorem{prop}[theorem]{Proposition} 
 
\newtheorem{lemma}[theorem]{Lemma}

\theoremstyle{definition}
\newtheorem{defn}[theorem]{Definition} 
\newtheorem{rk}[theorem]{Remark}

\newcommand{\G}{$\Gamma$ }

\newcommand{\tb}{\textbf}

\newcommand{\imp}{\Rightarrow}
\newcommand{\z}{\mathbb{Z}}
\newcommand{\C}{\mathbb{C}}

\newcommand{\Pic}{$\mathcal{P} $ }
\newcommand{\p}{\mathfrak{p}}

\newcommand{\Z}{\mathbb Z}

\newcommand{\secref}[1]{Section~\ref{#1}}
\newcommand{\thmref}[1]{Theorem~\ref{#1}}
\newcommand{\lemref}[1]{Lemma~\ref{#1}}
\newcommand{\remref}[1]{Remark~\ref{#1}}
\newcommand{\propref}[1]{Proposition~\ref{#1}}

\newcommand{\eqnref}[1]{~{\textrm(\ref{#1})}}

\newcommand{\dd}{\mathfrak{d}}
\newcommand{\B}{\mathcal{B}}
\newcommand{\Pb}{\mathbb{P}}

\DeclareMathOperator{\tr}{tr}

\numberwithin{equation}{section}

\usepackage[backref]{hyperref}

\begin{document}
	\title{Reversibility and its asymptotic counting in Picard group}
\author{Debattam Das}  
\address{Indian Institute of Technology Kanpur, Kanpur 208016, Uttar Pradesh, India}
\email{debattam123@gmail.com, debattam@iitk.ac.in }
    
\author{Krishnendu Gongopadhyay}
	\address{Indian Institute of Science Education and Research (IISER) Mohali,
		Knowledge City,  Sector 81, S.A.S. Nagar 140306, Punjab, India}
	\email{ krishnendu@iisermohali.ac.in}

    \author{Anirban Mukhopadhyay}
    \address{Institute of Mathematical Sciences, IV Cross Road, CIT Campus Taramani, Chennai 600 113 Tamil Nadu, India}
    \email{anirban@imsc.res.in}
	\subjclass[2020]{Primary 11F06; Secondary 20H10, 11N37, 20E45}
	\keywords{ Picard modular groups, reversible elements, reciprocal elements, infinite dihedral subgroups, counting problem}
\begin{abstract}
We investigate reversible elements in the Picard modular group $\mathrm{PSL}(2,\mathbb{Z}[i])$. We show that reversibility coincides with strong reversibility for Kleinian groups, in particular for the Picard group. We classify reversible elements in the Picard group and characterize loxodromic reversible elements up to conjugacy. We prove that each such conjugacy class contains exactly eight special representatives. We also obtain asymptotic estimates for the number of reversible conjugacy classes with bounded trace.
\end{abstract}
	\maketitle

    \section{Introduction}\label{intro}
The study of reversible (or real) elements in groups has a long history and arises naturally in several 
contexts, including geometric group theory, dynamical systems, and the theory of 
discrete subgroups of Lie groups, see, \cite{OS} for an overview. In the setting of Fuchsian 
groups, reversibility admits a rich geometric interpretation: hyperbolic reversible 
elements correspond to closed geodesics invariant under orientation-reversing 
symmetries, and their conjugacy classes can be identified with infinite dihedral 
subgroups.

We recall here that an element \( g \) of a group \( G \) is called \emph{reversible} (or \emph{real}) 
if it is conjugate to its inverse, i.e., there exists \( h \in G \) such that
\(
hgh^{-1} = g^{-1}.
\)
If such a conjugating element (reverser) \( h \) can be chosen to be an involution, then \( g \) 
is said to be \emph{strongly reversible} (or \emph{strongly real}). 
Reversibility is a conjugacy-invariant property, so we can say the conjugacy class containing the reversible elements are reversible classes. And strongly reversible elements 
are precisely those that can be expressed as products of two involutions.

The modular group \( \mathrm{PSL}(2,\mathbb{Z}) \) provides a particularly rich setting in which reversibility intertwines with number theory and geometry. For a non-involutory reversible element in a Fuchsian group, every reverser is itself an involution; in particular, reversibility implies strong reversibility. The reversible elements of the modular group, known as \emph{reciprocal elements}, were systematically studied by Sarnak \cite{sarnak}, who established both a classification of these elements and precise asymptotic counting results, relating them to closed geodesics passing through cone points of order two on the modular surface \(\mathbb{H}^2 / \mathrm{PSL}(2,\mathbb{Z})\). These closed geodesics are known as \textit{reciprocal geodesics}. In a different direction, Bourgain--Kontorovich studied low-lying reciprocal geodesics in \cite{bk}.
These results have also been extended in several directions, including to general  Fuchsian groups, eg.  \cite{es},  and to negatively curved Riemannian orbifolds, see \cite{paulin1}.

More recently, a combinatorial perspective based on word length has been developed. 
Basmajian and Suzzi Valli \cite{combigrowth} initiated the study of reciprocal elements in 
the modular group via free product structures, replacing hyperbolic length by 
word length. This approach was subsequently extended to Hecke groups 
\( \mathbb{Z}/2\mathbb{Z} * \mathbb{Z}/q\mathbb{Z} \) in 
\cite{das2024combinatorial, das2025asymptotic, bmv}, leading to asymptotic counting 
results for reciprocal classes in that setting.

\medskip

In contrast to the Fuchsian case, much less is known for Kleinian groups, where 
the geometry is three-dimensional and the boundary dynamics are substantially 
more intricate. A natural and fundamental example is the \emph{Picard modular group}
\(
\mathcal{P} := \mathrm{PSL}(2,\mathbb{Z}[i]),
\)
which acts discretely on hyperbolic \(3\)-space \( \mathbb{H}^3 \). 
Understanding reversibility in $\mathcal{P}$ is a problem of independent interest and was explicitly raised as an open problem in \cite{OS}. It was also pointed out in \cite{sarnak} for general Bianchi groups, of which the Picard group is the simplest example.

\medskip

The first goal of this paper is to resolve this problem by giving a complete 
classification of reversible elements in \( \mathcal{P} \). 
Our first observation is of a general nature:  for Kleinian groups, reversibility and strong reversibility coincide. Thus every reversible element of a Kleinian group admits an involutory reverser. We then specialize to the Picard modular group and obtain an explicit classification of its reversible elements. Our classification is expressed in terms of a natural invariant 
associated to loxodromic elements.

\medskip

Let \( A \in \mathcal{P} \) be loxodromic, represented by a matrix
\[
\widetilde{A}=
\begin{bmatrix}
a & b\\
c & d
\end{bmatrix},
\qquad c \neq 0.
\]
We associate to \( A \) the quantity
\begin{equation} \label{fa}
f_A := \frac{a-d}{c}.
\end{equation}
This is independent of the choice of lift, and geometrically equals the sum of the two fixed points of \(A\). Note that $f_A$ is not conjugacy invariant.

\medskip

Our first main result provides a complete classification of reversible elements 
in the Picard modular group.

\begin{theorem}\label{thm6}
Let \( g \in \mathcal{P} \). Then \( g \) is reversible if and only if one of the 
following holds:
\begin{enumerate}
\item \( g \) is elliptic;
\item \( g \) is parabolic;
\item \( g \) is loxodromic and conjugate to an element \( h \) such that
\[
f_h \in \{0,\,1,\,i,\,1+i\}.
\]
Furthermore, each reversible class of loxodromic elements with infinite cyclic centralizer in the Picard group contains exactly eight such elements.
\end{enumerate}
\end{theorem}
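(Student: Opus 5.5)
The plan is to use the paper's earlier observation that in a Kleinian group every reversible element is strongly reversible. So $g$ is reversible iff some involution $\tau\in\mathcal{P}$ satisfies $\tau g\tau^{-1}=g^{-1}$. The argument is then a case analysis by type, resting on one structural fact: a description of the involutions of $\mathcal{P}$ up to conjugacy.

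\emph{Involutions.} An involution of $\mathcal{P}$ lifts to $\begin{bmatrix} a&b\\ c&-a\end{bmatrix}$ with $a^2+bc=-1$. Its fixed points on $\partial\mathbb{H}^3$ are $(a\pm i)/c$ (or include $\infty$ if $c=0$), so they lie in $\mathbb{Q}(i)\cup\{\infty\}$. Since $\mathbb{Z}[i]$ has class number one, $\mathcal{P}$ acts transitively on cusps. We may therefore conjugate so that $\infty$ is fixed, giving an upper triangular involution $\sigma_w(z)=-z+w$ with $w\in\mathbb{Z}[i]$. Conjugating by $z\mapsto z+u$ replaces $w$ by $w+2u$. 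Hence every involution is conjugate to some $\sigma_w$ with $w\in\{0,1,i,1+i\}$; we will also check that these four are pairwise non-conjugate, though only existence is needed for the classification.

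\emph{Parabolic and elliptic cases.} A parabolic element fixes a cusp, so after conjugation it is $z\mapsto z+\omega$ with $\omega\in\mathbb{Z}[i]$. The involution $\sigma_0=\mathrm{diag}(i,-i)$, which lies in $\mathrm{PSL}(2,\mathbb{Z}[i])$, conjugates it to $z\mapsto z-\omega$. For elliptics, I would use the known list of maximal finite subgroups of $\mathcal{P}$ (the dihedral groups $D_2,D_3$ and $A_4$) and their conjugacy classes of elements. Order-$2$ elements are trivially reversible. For each class of order-$3$ elements I would exhibit an explicit involutory reverser, e.g. an element $\begin{bmatrix}0&-1\\1&1\end{bmatrix}$ lies in a $D_3$, where an involution inverts it. I must check that every order-$3$ class, including those sitting only in an $A_4$, also lies in some $D_3$ or is otherwise inverted by an explicit matrix. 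I expect this to be a finite computation.

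\emph{Loxodromic case.} Suppose $\tau g\tau^{-1}=g^{-1}$ with $\tau$ an involution and $g$ loxodromic. Then $\tau$ cannot fix both fixed points of $g$, since it would then commute with $g$. So $\tau$ swaps them. Conjugating $\tau$ to $\sigma_w$ with $w\in\{0,1,i,1+i\}$ and replacing $g$ by the corresponding conjugate $h$, the swap condition $-x_1+w=x_2$ reads $f_h=x_1+x_2=w$. Here $c\neq0$ because a loxodromic element of a discrete group cannot fix the cusp $\infty$. Conversely, if $f_h=w$, then $\sigma_w$ preserves $\{x_1,x_2\}$ and reverses the orientation of the axis. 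Hence $\sigma_w h\sigma_w$ has the same axis, reversed direction and the same complex length, so it equals $h^{-1}$. This proves the equivalence.

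\emph{Counting eight.} Fix a reversible $g$ with centralizer $\langle g_0\rangle$, $g_0$ primitive. The representatives $h=xgx^{-1}$ with $f_h\in\{0,1,i,1+i\}$ correspond to pairs $(x,w)$ such that $x^{-1}\sigma_w x$ is a reverser of $g$, with $x$ taken modulo $C(g)$ on the right. The reversers of $g$ form the coset $\tau\langle g_0\rangle$, and all of them are involutions. Conjugation by $g_0$ sends $\tau g_0^k$ to $\tau g_0^{k-2}$, so the reversers fall into exactly two $C(g)$-classes, $\tau$ and $\tau g_0$. For each class, the admissible $x$ with its $w$ are counted by $C(\sigma_w)$ acting on the left, modulo the elements that fix $h$. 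Thus the total count is $2\cdot|C(\sigma_w)/\langle\sigma_w\rangle|\cdot(\text{correction})$. For $\sigma_0$ the centralizer is $\{1,\sigma_0,z\mapsto -1/z,z\mapsto 1/z\}$, of order $4$, and I would compute the other $C(\sigma_w)$ similarly. This step is the main obstacle. I must correctly determine $C(\sigma_w)$ in $\mathcal{P}$ for each $w$, decide which of these elements give distinct conjugates $h$, and confirm that the bookkeeping always yields exactly $8$. I would first verify the count on an explicit example to calibrate the correction factor.
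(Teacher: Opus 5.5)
Your classification argument is correct and follows essentially the paper's route. You reduce to involutory reversers via Proposition~\ref{pro}, conjugate the reverser to one of the four standard involutions, and read off $f_h=w$ because the reverser must swap the fixed points. Deriving the four involution classes from cusp transitivity, and proving the converse by a fixed-point argument, are valid replacements for the citation in Theorem~\ref{3.1} and the matrix computation of Lemma~\ref{3.4}.

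The order-$3$ elliptic case, which you leave as a finite check, closes with the fact the paper imports. Elliptic elements of $\mathcal P$ have trace in $\{0,\pm1\}$, so they have order $2$ or $3$. All elements of order $3$ form one conjugacy class, that of $u_3(z)=-1/(z+1)$. This is exactly your matrix $\begin{bmatrix}0&-1\\1&1\end{bmatrix}$, and it is inverted by the involution $z\mapsto 1/z$.

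The genuine gap is the claim of exactly eight representatives, which you do not prove. You leave $C(\sigma_w)$ undetermined for $w\neq0$ and the ``correction factor'' unknown. Dividing by $\langle\sigma_w\rangle$ is actually wrong: $\sigma_w$ sends a special representative $h$ to $h^{-1}\neq h$, which is a different special representative. Three ingredients are missing.

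First, you need Lemma~\ref{4.1}: for every $w\in\{0,1,i,1+i\}$, $C(\sigma_w)$ is a Klein four-group. This means solving $X\sigma_w=\pm\sigma_w X$ in $\mathrm{SL}(2,\mathbb Z[i])$ for each $w$.

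Second, you need the pairwise non-conjugacy of the $\sigma_w$, which you set aside as unnecessary. Without it, a single reverser class could yield representatives with two different values of $f_h$, doubling the count.

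Third, the bookkeeping must be done correctly. Special representatives with $f_h=w$ correspond to cosets $xC(g)$ such that $x^{-1}\sigma_w x$ is a reverser. Suppose the reverser class $\{\tau g_0^{2m}\}$ is conjugate to $\sigma_w$, and fix $x_0$ with $x_0^{-1}\sigma_w x_0=\tau$. Since $g_0^{-j}\tau g_0^{j}=\tau g_0^{2j}$, each relevant coset has a unique representative $x$ with $x^{-1}\sigma_w x=\tau$. Then $x\in C(\sigma_w)x_0$.

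Set $h_0=x_0gx_0^{-1}$. The elements $yh_0y^{-1}$ with $y\in C(\sigma_w)$ are pairwise distinct, because $C(\sigma_w)\cap C(h_0)$ is a finite subgroup of an infinite cyclic group, hence trivial. So each reverser class contributes exactly $|C(\sigma_w)|=4$ elements: $h_0,\ h_0^{-1},\ th_0t,\ th_0^{-1}t$, where $C(\sigma_w)=\{1,\sigma_w,t,t\sigma_w\}$.

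The two classes $[\tau]$ and $[\tau g_0]$ give disjoint sets, since the coset $xC(g)$ determines the class of $x^{-1}\sigma_w x$. This gives exactly $8$, with no correction factor.
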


\medskip
The asymptotic counting of strongly reversible closed geodesics on negatively curved orbifolds has been developed in considerable generality by Parkkonen and Paulin \cite{paulin1}. Their framework, however, does not apply directly to the Picard modular group, because the relevant fixed-point sets have infinite skinning measure. Related counting arguments appear in \cite{paulin2} in the setting of ambiguous classes. In the present work, we therefore exploit instead the explicit arithmetic structure of \(\mathrm{PSL}(2,\mathbb{Z}[i])\) to study the asymptotic growth of reversible conjugacy classes.

The second goal of this paper is to determine the asymptotic growth of reversible loxodromic conjugacy classes in the Picard modular group. The classification above reduces this counting problem to an explicit arithmetic problem over the Gaussian integers. More precisely, reversible loxodromic elements admit special representatives of the forms described in Theorem~\ref{thm6} and Lemma~\ref{3.4}, and counting such representatives with bounded trace leads naturally to shifted divisor sums over \(\mathbb Z[i]\).

To obtain the required asymptotics, we establish a Gaussian-integer analogue of Ingham's classical additive divisor theorem \cite{ingham1927}, for an arbitrary fixed shift \(\kappa\in\mathbb Z[i]\setminus\{0\}\). The case \(\kappa=1\) also follows from the more general framework of Parkkonen and Paulin \cite[Theorem~2]{paulin2}. Applying this divisor-sum estimate, we obtain in Theorem~\ref{thm:BT} an asymptotic formula for the number of reversible elements in special normal form. Finally, the analysis of centralizers, together with Lemma~\ref{lem: exact}, determines the number of such special representatives lying in a given reversible conjugacy class, allowing us to pass from the count of special representatives to the asymptotic count of reversible conjugacy classes.
\begin{theorem}\label{Rcount}
    Let $R_T$ be the set of all reversible loxodromic classes where the norm of the trace in each conjugacy class is at most $T$. Then
    \[ |R_T|=\frac{3\pi}{32G}T(\log T)^{2}+O\bigl(T\log T(\log\log T)^{2}\bigr),\]
   where $G=L(2,\chi_{-4})
   $ is Catalan's constant.
    
\end{theorem}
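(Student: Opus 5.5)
Our plan follows the reduction outlined in the introduction. By Theorem~\ref{thm6}, every reversible loxodromic class contains an element $h$ with $f_h\in\{0,1,i,1+i\}$. Writing a lift as $\begin{bmatrix} a & b\\ c& d\end{bmatrix}$ with $a-d=f_h c$, we get $a=d+f_hc$ and the determinant condition $d(d+f_h c)-bc=1$. We therefore parametrize the special representatives by the trace $t=2d+f_hc$ together with a factorization. For example, when $f_h=0$ we have $a=d=t/2$ and $bc=d^2-1=(d-1)(d+1)$. For the other values of $f_h$, completing the square gives $4bc = t^2-4 - f_h^2c^2$, or equivalently $c\,(4b+f_h^2c)=t^2-4$. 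In every case, the number of special representatives with $\tr h = t$ (up to sign) is a divisor-type count: a number of factorizations $\alpha\beta=N$ in $\mathbb Z[i]$ with congruence side conditions, where $N$ is $(t/2)^2-1$ or a similar shifted product $m(m+\kappa)$. The normal-form lemma (Lemma~\ref{3.4}) fixes the exact congruence conditions.

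\textbf{Step 1: counting special representatives.} Summing over $|t|^2\le T$ turns the count into shifted convolution sums $\sum_{N(m)\le X} \tau_{\mathbb Z[i]}(m)\,\tau_{\mathbb Z[i]}(m+\kappa)$ for fixed shifts $\kappa$ (such as $\kappa=2$), possibly restricted to residue classes. Here $\tau$ counts divisors modulo units. We then apply the Gaussian analogue of Ingham's theorem established earlier, which gives a main term of the form $c_\kappa X(\log X)^2$ with an explicit constant involving $L(2,\chi_{-4})=G$ and local factors at primes dividing $\kappa$. This is precisely Theorem~\ref{thm:BT}, which we take as established: the number of special representatives with trace norm at most $T$ is $C\,T(\log T)^2+O(T\log T(\log\log T)^2)$. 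The constant $C$ is the sum of the four contributions $f_h\in\{0,1,i,1+i\}$ after accounting for the $\pm$ identification in $\mathrm{PSL}$.

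\textbf{Step 2: passing to classes.} By Theorem~\ref{thm6}(3), each reversible class whose centralizer is infinite cyclic contains exactly eight special representatives. Lemma~\ref{lem: exact} identifies the exceptional classes, namely those with larger centralizer (e.g.\ classes where the element commutes with an elliptic element, or where an extra symmetry coincides). For these classes the number of representatives differs but stays bounded, so
\[
|R_T|=\tfrac18\#\{\text{special representatives with } |\tr|^2\le T\} + O(E_T),
\]
where $E_T$ counts the exceptional classes. We will show $E_T=O(T\log T)$, and likely $O(T)$: an exceptional element satisfies an extra algebraic condition, which cuts the divisor sum down to a single-divisor sum $\sum \tau(m)\ll T\log T$. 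Hence the error stays within $O(T\log T(\log\log T)^2)$. Dividing the constant from Step 1 by $8$ should produce $\frac{3\pi}{32G}$. We will check this by computing the local densities at the prime $1+i$, which is where the four values of $f_h$ behave differently.

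\textbf{Main obstacle.} The hard part is the bookkeeping in Steps 1 and 2. We must verify that the normal-form parametrization is a bijection onto the special representatives, with no double counting across the four values of $f_h$ and the correct unit and sign conventions. We must also control the exceptional classes uniformly. We will first carry out the $f_h=0$ case explicitly and match its constant against the Ingham-type formula. We then treat $f_h=1,i,1+i$ by substituting $c\mapsto$ the appropriate residue class modulo $(1+i)$ or $2$, which reduces each to a shifted divisor sum with a congruence condition.
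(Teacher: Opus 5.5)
Your proposal is correct and is essentially the paper's proof: take Theorem~\ref{thm:BT}, halve it because $\mathcal{B}_T$ counts matrices in $\mathrm{SL}(2,\mathbb{Z}[i])$ and is stable under $M\mapsto -M$, divide by the $8$ special forms per class from Lemma~\ref{lem: exact}, and discard the classes with centraliser $\mathbb{Z}\oplus\mathbb{Z}/3\mathbb{Z}$. Make the halving an explicit step rather than folding it into your $C$ while also calling $C$ ``precisely'' the constant of Theorem~\ref{thm:BT}: dividing $\frac{3\pi}{2G}$ by $8$ alone gives $\frac{3\pi}{16G}$, not $\frac{3\pi}{32G}$.

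For the exceptional classes, the paper uses the sharper fact that they are powers of at most three primitive elements commuting with an order-$3$ elliptic, so only $O(\log T)$ of them have $N\mu\le T$. Your $O(T\log T)$ bound via an unspecified ``extra algebraic condition'' is only a sketch, but it also suffices and can be made concrete. Each such class meets the centraliser of $u_3$, whose elements are $xI+y\left(\begin{smallmatrix}0&-1\\1&1\end{smallmatrix}\right)$ with $x,y\in\mathbb{Z}[i]$ and $x^2+xy+y^2=1$. Fixing the trace $2x+y=\mu$ leaves a quadratic in $x$, so at most two such matrices have any given trace, giving $O(T)$ exceptional classes.
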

Recently, it is shown that Catalan's constant is irrational in \cite{sun2026catalan}.

\medskip

The paper is organized as follows. In Section~\ref{prelim}, we recall 
preliminaries on Kleinian groups and the Picard modular group. In \secref{sec3}, we showed the reversible and strongly reversible are the same in the case of Kleinian group.
In \secref{sec4}, we prove the classification theorem. In \secref{sec5}, we deduced the centralisers of the Picard group. In Section~\ref{sec:ingham} we establish the
analog of Ingham's additive divisor theorem over $\mathbb{Z}[i]$
(Theorem~\ref{thm:ingham}). In Section~\ref{sec:applying} we deduce the asymptotics of
the restricted divisor sum $S(T)$ governing the matrices with $c$ coprime to $2$. In
Section~\ref{sec:ramified} we treat the complementary matrices with $(1+i)\mid c$ and
assemble the constant, completing the proof of Theorem~\ref{thm:BT}.

\section{Preliminaries}\label{prelim}

In this section, we recall some standard facts concerning the action of \(\mathrm{PSL}(2,\mathbb{C})\) on hyperbolic \(3\)-space \(\mathbb{H}^3\), following \cite{MR698777} and \cite{martelli2016introduction}. We also recall the classification of its elements and outline basic properties of the Picard modular group that will be needed in the later sections.
We use the upper half-space model
\[
\mathbb{H}^3=\{(z,t)\in \mathbb{C}\times \mathbb{R}: t>0\},
\]
whose boundary at infinity is naturally identified with
\[
\partial \mathbb{H}^3=\mathbb{C}\cup\{\infty\}.
\]
The group \(\rm PSL(2,\mathbb{C})\) acts on \(\mathbb{H}^3\cup \partial \mathbb{H}^3\) by Möbius transformations
\[
z\mapsto \frac{az+b}{cz+d},
\qquad
\begin{pmatrix}
a & b\\
c & d
\end{pmatrix}\in \rm SL(2,\mathbb{C}),
\]
where two matrices differing by sign determine the same element of \(\rm PSL(2,\mathbb{C})\). 
A discrete subgroup of \(\rm PSL(2,\mathbb{C})\) is called a \emph{Kleinian group}, and it acts properly discontinuously on $\mathbb H^3$.

A basic example of Kleinian group, which is not torsion free, is the Picard modular group
$\mathcal{P}:=\mathrm{PSL}(2,\mathbb{Z}[i])$,
where \(\mathbb{Z}[i]\) denotes the ring of Gaussian integers. Also, the Picard group is the simplest group in the class of Bianchi groups $\mathrm{PSL}(2,\mathcal{O}_d)$ where $d\in \mathbb N$, and $\mathcal O_d$ is the ring of integers in $\mathbb Q(\sqrt{-d})$. The units in \(\mathbb{Z}[i]\) are
\[
\mathbb{Z}[i]^\times=\{\pm 1,\pm i\}.
\] For \(c\in \mathbb{Z}[i]\setminus\{0\}\), we write \((c)\) for the principal ideal generated by \(c\). Accordingly, congruences modulo \(c\) are understood in the ring \(\mathbb{Z}[i]/(c)\). We denote  $a-b\in (c)$ in $\mathbb{Z}[\textit{i}]$ by  $a\equiv b \mod{c}$.  

For each element \(g\in \mathrm {PSL}(2,\mathbb{Z}[i])\), there are exactly two lifts in \(\mathrm{SL}(2,\mathbb{Z}[i])\), namely \(\widetilde g\) and \(-\widetilde g\).  Let
\[
A=\begin{pmatrix}
a & b\\
c & d
\end{pmatrix}\in \rm{SL}(2,\mathbb{C}).
\]
The fixed points of the corresponding Möbius transformation on \(\partial \mathbb{H}^3\) are the solutions of
\[
\frac{az+b}{cz+d}=z,
\]
equivalently,
\[
cz^2+(d-a)z-b=0.
\]
In particular, if \(c\neq 0\), then the sum of the two fixed points is
$\dfrac{a-d}{c}$. 

This motivates the notation $f_A$ at \eqnref{fa} in \secref{intro}. The element \(f_A\) is well defined in the loxodromic elements in \(\mathrm{PSL}(2,\mathbb{Z}[i])\), since replacing \(\widetilde A\) by \(-\widetilde A\) does not change the value of \((a-d)/c\). Moreover, when \(A\) is not parabolic and \(c\neq 0\), the element \(f_A\) is equal to the sum of the two fixed points of \(A\) on \(\partial\mathbb{H}^3\).

We recall the standard classification of elements of $\mathrm{PSL}(2,\mathbb{C})$ via their fixed points on $\partial \mathbb{H}^3$. An element $g$ is \emph{elliptic} if it fixes a point in $\mathbb{H}^3$, \emph{parabolic} if it has exactly one fixed point on $\partial \mathbb{H}^3$, and \emph{loxodromic} if it is non-elliptic and has exactly two fixed points on $\partial \mathbb{H}^3$.  In particular, every non-parabolic element has two fixed points on $\partial \mathbb{H}^3$. Loxodromic elements are also referred to as hyperbolic elements sometimes in the literature (see~\cite{martelli2016introduction}). 

Choosing a lift of $g$ to $\mathrm{SL}(2,\mathbb{C})$, elliptic and loxodromic elements are semisimple, whereas parabolic elements are not. Up to conjugacy, a loxodromic element is represented by a diagonal matrix with eigenvalues $r e^{i\theta}$ and $r^{-1} e^{i\theta}$, where $r>1$ and $0 \le \theta < \pi$, while a parabolic element is represented by an upper triangular unipotent matrix.

If an element \(g\in\rm  PSL(2,\mathbb{C})\) fixes \(\infty\), then it is represented on \(\mathbb{C}\cup\{\infty\}\) by an affine map
\[
z\mapsto az+b,
\qquad a,b\in \mathbb{C},\ a\neq 0.
\]
This observation will be used repeatedly in Section~3 after conjugating one fixed point of an element to \(\infty\). 

The fixed points of loxodromic elements in Kleinian groups are closely tied to the structure of the group. The following theorem makes this relationship precise.	
\begin{theorem}\cite[Theorem 4.3.5]{MR698777}\label{2.2.1}
    \begin{enumerate}
        \item Two M\"{o}bius transformations $g$ and $h$ have a common fixed
point in $\widehat{\mathbb{C}}=\mathbb C\cup \{\infty\}$ if and only if $trace[g, h] = 2$.
\item If $g$ and $h$ (neither the identity) have a common fixed point in $\widehat{\mathbb{C}}$ then either:\begin{enumerate}
    \item  $[g, h] = I $(so $gh = hg$) and $Fix(g) = Fix(h)$; or
\item $[g, h]$ is parabolic (and $gh\neq hg$) and $Fix(g) \neq Fix(h).$ 
\end{enumerate}

    \end{enumerate}
\end{theorem}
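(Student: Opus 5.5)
Since this is the cited result \cite[Theorem 4.3.5]{MR698777}, I would give a direct matrix computation. The idea is to normalise $g$ by conjugation in $\mathrm{PSL}(2,\mathbb C)$; this changes neither $\tr[g,h]$ nor the incidence of fixed points. The trace is well defined on $\mathrm{PSL}$ because $[g,h]$ does not depend on the choice of lifts.

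\textbf{Part (1).} Assume $g\neq I$ and split into two cases according to the type of $g$.

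If $g$ is parabolic, conjugate so that $g=\begin{pmatrix}1&1\\0&1\end{pmatrix}$, whose only fixed point is $\infty$. Write $h=\begin{pmatrix}a&b\\c&d\end{pmatrix}$. A short computation gives $\tr[g,h]=2+c^2$. Hence $\tr[g,h]=2$ if and only if $c=0$, which holds if and only if $h(\infty)=\infty$, i.e.\ $h$ shares the fixed point of $g$.

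If $g$ is not parabolic, it has two fixed points, so conjugate so that $g=\mathrm{diag}(k,k^{-1})$ with $k\neq\pm1$. Then
\[
ghg^{-1}=\begin{pmatrix}a&k^2b\\ k^{-2}c&d\end{pmatrix}, \qquad h^{-1}=\begin{pmatrix}d&-b\\-c&a\end{pmatrix}.
\]
Multiplying these and using $ad-bc=1$ gives
\[
\tr[g,h]=2ad-bc(k^2+k^{-2})=2-bc\,(k-k^{-1})^2 .
\]
Since $k-k^{-1}\neq0$, we get $\tr[g,h]=2$ if and only if $bc=0$. Now $b=0$ means $h(0)=0$, and $c=0$ means $h(\infty)=\infty$. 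So the condition holds exactly when $h$ fixes a fixed point of $g$. If $g=I$ both sides are trivially true.

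\textbf{Part (2).} Conjugate the common fixed point to $\infty$, so that $g(z)=\alpha z+\beta$ and $h(z)=\gamma z+\delta$ with $\alpha,\gamma\neq0$. The linear parts of $g,h,g^{-1},h^{-1}$ multiply to $1$, so $[g,h]$ is a translation $z\mapsto z+t$. Such a map is either the identity ($t=0$) or parabolic ($t\neq0$).

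Expanding $gh$ and $hg$ shows that $gh=hg$ if and only if
\[
(\alpha-1)\delta=(\gamma-1)\beta .
\]
It remains to check that this commuting condition is equivalent to $\mathrm{Fix}(g)=\mathrm{Fix}(h)$.

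\begin{itemize}
\item If $\alpha=1$, then $g$ is a nontrivial translation, so $\beta\neq0$. Commuting then forces $\gamma=1$, so both maps are translations with sole fixed point $\infty$. Conversely, two translations always commute. The case $\gamma=1$ is symmetric.
\item If $\alpha,\gamma\neq1$, the finite fixed points are $\beta/(1-\alpha)$ and $\delta/(1-\gamma)$. The commuting identity says exactly that these coincide, so commuting is equivalent to $\mathrm{Fix}(g)=\mathrm{Fix}(h)$.
\end{itemize}

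This gives the dichotomy: either $[g,h]=I$ and $\mathrm{Fix}(g)=\mathrm{Fix}(h)$, or $[g,h]$ is a nontrivial translation, hence parabolic, and $\mathrm{Fix}(g)\neq\mathrm{Fix}(h)$.

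The only delicate point is the bookkeeping in part (2) when one of the elements is parabolic. There the fixed-point sets have different cardinalities, and this must be handled separately, as in the first bullet above.
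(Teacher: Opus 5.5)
Your proof is correct, and it is essentially the standard argument behind the cited result \cite[Theorem 4.3.5]{MR698777}. You normalise $g$ to $z\mapsto z+1$ or to $\mathrm{diag}(k,k^{-1})$, read off $\tr[g,h]=2+c^2$ and $\tr[g,h]=2-bc(k-k^{-1})^2$, and for (2) send the common fixed point to $\infty$, so that $[g,h]$ is a translation that is trivial exactly when $(\alpha-1)\delta=(\gamma-1)\beta$. The paper gives no proof of its own and simply quotes Beardon.
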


\section{Reversibility In Kleinian group}\label{sec3}
In this section, we show that the reversible classes and the strongly reversible classes are same in Kleinian group. 


\begin{prop}\label{pro}
	Let $ \Gamma $ be a Kleinian group (possibly with involutions). Let $ g $ be an element of $ \Gamma $. Then the following statements are equivalent:
	\begin{enumerate}
		\item $ g $ is a reversible.
	\item $ g $ is a strongly reversible.
	\end{enumerate}
\end{prop}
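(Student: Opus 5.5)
The implication (2) $\Rightarrow$ (1) is immediate from the definitions, so the plan is to prove (1) $\Rightarrow$ (2). In fact I expect discreteness of $\Gamma$ to play no role: the argument is a computation in $\mathrm{PSL}(2,\mathbb{C})$. I will show that if $g\neq 1$ is not an involution and $hgh^{-1}=g^{-1}$, then the reverser $h$ is itself an involution. The remaining cases $g=1$ and $g^2=1$ are reversed by the identity, which is trivially reversible. These are strongly reversible under the convention that "involution" means $h^2=1$; I will state this convention explicitly. If $g^2=1$ and the stricter convention $h\neq 1$ is used, I would need an involution in $\Gamma$ commuting with $g$, and this need not exist, so the convention must be fixed.

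The key observation is that $h$ maps $\mathrm{Fix}(g)$ onto $\mathrm{Fix}(hgh^{-1})=\mathrm{Fix}(g^{-1})=\mathrm{Fix}(g)$. So $h$ permutes the fixed point set of $g$ on $\partial\mathbb{H}^3$. Since conjugating the pair $(g,h)$ inside $\mathrm{PSL}(2,\mathbb{C})$ preserves both the relation $hgh^{-1}=g^{-1}$ and the property $h^2=1$, I may normalise the fixed points freely.

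\emph{Parabolic case.} Conjugate so that $g(z)=z+1$. Then $h$ fixes $\infty$, so $h(z)=az+b$ with $a\neq 0$. A direct computation gives $hgh^{-1}(z)=z+a$. Setting this equal to $g^{-1}(z)=z-1$ forces $a=-1$. Hence $h(z)=-z+b$, which satisfies $h^2=1$ and $h\neq 1$.

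\emph{Semisimple case (elliptic or loxodromic).} Conjugate so that $\mathrm{Fix}(g)=\{0,\infty\}$ and $g(z)=\lambda z$ with $\lambda\neq 0,1$. Either $h$ fixes both $0$ and $\infty$, or it swaps them.
\begin{itemize}
\item If $h$ fixes both points, then $h(z)=\mu z$. This $h$ commutes with $g$ (equivalently, in the language of Theorem~\ref{2.2.1}, $\mathrm{Fix}(h)=\mathrm{Fix}(g)$ forces $[g,h]=I$). Then $g=g^{-1}$, i.e.\ $g^2=1$, which is excluded by assumption.
\item Therefore $h$ swaps $0$ and $\infty$, so $h(z)=k/z$ for some $k\neq 0$. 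A lift of $h$ is $\begin{pmatrix}0&\sqrt{k}\\ 1/\sqrt{k}&0\end{pmatrix}$ up to scaling, and this matrix has trace $0$. A trace-zero element of $\mathrm{PSL}(2,\mathbb{C})$ has order $2$, so $h$ is an involution. One can also check directly that $h(h(z))=z$.
\end{itemize}

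The main point requiring care is not any computation but the bookkeeping of cases. The step that genuinely excludes a commuting reverser is the case "$h$ fixes both fixed points", where one must rule out $h$ commuting with $g$; this works exactly because $g^2\neq 1$. The involution and identity cases are where the convention on "involution" matters.
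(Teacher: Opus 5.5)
Your argument is correct and is essentially the paper's. The reverser $h$ permutes $\mathrm{Fix}(g)$; if it fixes both points it commutes with $g$, forcing $g^2=1$; otherwise $h$ must be an involution. In the parabolic case your direct computation $hgh^{-1}(z)=z+a$ is cleaner than the paper's argument, which appeals to discreteness to exclude a loxodromic reverser, so, as you expected, discreteness is never needed.

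One correction to your remark on conventions: a nontrivial involution $g$ is reversed by the involution $g$ itself. So the stricter convention $h\neq 1$ only matters for $g=1$ in a group with no involutions.
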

\begin{proof}
	First, we consider $ g $ is an involution, then the lemma will be a trivial case. Now, we assume, it is not an involution. By conjugating $ \Gamma $ by some element in $ \rm PSL(2,\mathbb{C}) $ such that $ \infty \in Fix(g) $. Then we can restrict the domain of $ g $ to $ \mathbb{C} $. Therefore, $ g $ will be an automorphism of $ \mathbb{C} .$ So, from the theory of the automorpshim of $ \mathbb{C} $, $ g $ is one of the forms of $ z\mapsto az+b $ maps, where $ a,b\in \mathbb{C} $ and $ a\neq 0. $
	 In the Kleinian group, $ g $ can be one of the three types, i.e., parabolic, elliptic, loxodromic. Now, we assume that there be an element $ h $ in \G such that 
	$$ h^{-1}gh=g^{-1}.$$  Since the fixed point set of $ g  $ contains $ \infty $.\\
	\textbf{Case 1:}\\
	First, we assume that $ g $ is a parabolic element, this implies $ Fix(g)=\{\infty\} $ and $ b\neq 0 $.
	$$h^{-1}gh=g^{-1}$$ 
	$$\imp g(h(\infty))=h(\infty)$$ 
	$$\imp h(\infty)=\infty.$$ 
That means $ h $ is also in the form of $ z\mapsto \alpha z+\beta $ where, $ \alpha,\beta\in \mathbb{C}. $ If $ h $ is also a parabolic element then, $ g $ will be an involution, that will contradicts \thmref{2.2.1}. Then, it should be loxodromic or elliptic. But, $ Fix(g)\cap Fix(h)\neq \emptyset $. This implies it must not be a loxodromic element otherwise, it will contradict the discreteness of the group. Then, $ h $ must be an elliptic element. So, there be a smallest positive integer $ n $ such that, $ h^{n}(z)=z $ for all $ z\in \mathbb{C}. $ That means,
$$h(z)=\alpha z+\beta$$ 
$$\imp h^{2}(z)=\alpha^{2}z+\alpha \beta+\beta$$ 
$$\vdots$$ 
$$\imp h^{k}(z)=\alpha^{k}z+(\alpha^{k-1}+\alpha^{k-2}+\alpha^{k-3}+\dots +\alpha+1)\beta $$ 
$$\vdots$$ 
$$\imp h^{n}(z)=\alpha^{n}z+(\alpha^{n-1}+\alpha^{n-2}+\alpha^{n-3}+\dots +\alpha+1)\beta=z \hbox{ for all $z\in \mathbb{C}.  $}$$ 
Then, $ \alpha^{n}=1 $. That means, $ h(z)=e^{i2\pi/n}z+\beta $ also, $ h^{-1}(z)=(z-\beta)e^{-i2\pi/n}. $Therefore,
\begin{equation}\label{2.1}
	h^{-1}gh(z)=az+\frac{(a-1)\beta}{\alpha}+\frac{b}{\alpha}
\end{equation}
$$\imp g^{-1}(z)=az+(a-1)e^{-2i\pi/n}\beta+e^{-2i\pi/n}b$$ 
$$\imp \frac{(z-b)}{a}=az+(a-1)e^{-2i\pi/n}\beta+e^{-2i\pi/n}b$$ 
for every $ z\in \mathbb{C}. $ This implies, $ a^{2}=1 $ and $ \frac{-b}{a}=(a-1)e^{-2i\pi/n}\beta+e^{-2i\pi/n}$.
If $ a=1 $, then $ e^{2i\pi/n}=-1 \imp n=2.$ If $ a=- 1 $, then $ g(z)=-z+b \imp g^{2}(z)=z.$ This is contradiction as we choose $ g $ to be parabolic. Then, $ h(z)=-z+\beta $. Therefore, $ h $ is an involution.\\
\textbf{Case 2:}\\
Now, assume $ g $ is not a parabolic element. Then it has two fixed points at the boundary, as earlier we assume one of the fixed point of $ g $ is $ \infty .$ Then with the similar argument, $ h $ will either fixed the fixed points of $ g $ or interchange them.

If $ h $ fixes both points, then by \thmref{2.2.1} the elements $ g$ and $h $ commute, that forces $g$ to be an involution, which contradicts the hypothesis.

So, $ h $ have to interchange the fixed points of $ g $. And also, there is at least one fixed point of $ h $ in the boundary. This implies, $ h^{2} $ has three fixed points in the boundary, i.e., $ h^{2}=Id. $ This proves the proposition.  
\end{proof}
\begin{rk}
   In Kleinian groups, parabolic elements and elliptic elements of any order can be reversible, as can involutions and loxodromic elements. That is not the case for elliptic and parabolic elements to be reversible in a Fuchsian group, (see \cite{MR4748467}).
\end{rk}

\section{Reversibility In Picard modular group}\label{sec4}

In this section, we classify the reversible elements of the Picard modular group.
	Since the Picard group is a Kleinian group, the reciprocal classes are the same as the strongly reciprocal classes. To classify the strongly reciprocal elements, it suffices to identify the involutions present in the Picard group up to conjugacy.
	We know the conjugacy classification of the involutions of Picard group \Pic  by the following theorem:
	\begin{theorem}\cite{nn}\label{3.1}
		There are only five conjugacy classes of elliptic elements in 
		$\mathcal{P}$, four for those of order $ 2 $ and one for those of order $3.$ In particular, any elliptic transformation of order 2 is conjugate to one of 
		\begin{enumerate}
			\item $ u_{2,1}:z\mapsto -z ,$
			\item $ u_{2,2}:z\mapsto -z+1, $
			\item $ u_{2,3}:z\mapsto-z+i, $
			\item$ u_{2,4}:z\mapsto-z+1+i, $
		\end{enumerate}
		while any elliptic transformation of order $3$ is conjugate to 
		$$u_{3}:z\mapsto\frac{-1}{z+1}.$$ 
	\end{theorem}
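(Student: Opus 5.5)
The approach is algebraic: first pin down the possible orders from the trace, then reduce conjugacy classes to a normal form by an Euclidean-type descent in $\mathbb{Z}[i]$ (essentially reduction of binary quadratic forms over $\mathbb{Z}[i]$), and finally separate the resulting classes by invariants.

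\textbf{Possible orders.} An elliptic $g\in\mathcal{P}$ has a lift with real trace $\tr\widetilde g=2\cos(\pi k/n)\in(-2,2)$. This trace also lies in $\mathbb{Z}[i]$, so $\tr\widetilde g\in\{0,\pm1\}$. The values $\pm\sqrt2$ and $\pm\sqrt3$, which would give orders $4$ and $6$, are excluded. Trace $0$ gives order $2$ in $\mathcal{P}$, and trace $\pm1$ gives order $3$. So it suffices to classify, up to $\mathrm{SL}(2,\mathbb{Z}[i])$-conjugacy and sign, the matrices
\[
\begin{pmatrix}a&b\\ c&-a\end{pmatrix},\quad a^2+bc=-1,
\qquad\text{and}\qquad
\begin{pmatrix}a&b\\ c&1-a\end{pmatrix},\quad a(1-a)-bc=1 .
\]

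\textbf{Reduction of involutions.} I would use two moves: conjugation by the translations $z\mapsto z+n$ with $n\in\mathbb{Z}[i]$, and conjugation by $z\mapsto -1/z$.
\begin{enumerate}
\item For a trace-zero matrix with $c\neq0$, conjugating by $z\mapsto z+n$ keeps $c$ and replaces $a$ by $a+nc$ (with $b$ adjusted). Since $\mathbb{Z}[i]$ is Euclidean with $|x-\text{nearest lattice point}|\le 1/\sqrt2$, we may assume $|a|^2\le |c|^2/2$.
\item Conjugating by $z\mapsto -1/z$ interchanges $b$ and $c$ up to sign. Choosing $c$ of minimal nonzero norm in the class, we may also assume $|b|\ge|c|$ whenever $b\ne0$.
\item If $b\neq0$, then $|c|^2\le|bc|=|a^2+1|\le |c|^2/2+1$, so $|c|^2\le2$. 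The finitely many remaining cases, $c$ a unit or a unit times $1+i$, are checked by hand and each is conjugated to one with $b=0$ or $c=0$.
\item In the case $c=0$ we have $a=\pm i$, so the map is $z\mapsto -z+\beta$.
\item Conjugating $z\mapsto -z+\beta$ by $z\mapsto z+n$ gives $z\mapsto -z+\beta-2n$. Hence $\beta$ can be taken modulo $2\mathbb{Z}[i]$, i.e.\ $\beta\in\{0,1,i,1+i\}$. These are $u_{2,1},\dots,u_{2,4}$.
\end{enumerate}

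\textbf{Order three.} The same descent applies to trace $\pm1$, using $a(1-a)-bc=1$. Here $c=0$ is impossible, since it would force $a(1-a)=1$ with $a\in\mathbb{Z}[i]$, i.e.\ $a$ a primitive sixth root of unity, which is not Gaussian. So the descent ends at $|c|=1$. Conjugating by a diagonal unit matrix normalizes $c=1$, after which the entries can be adjusted to $u_3$.

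\textbf{Distinctness (main obstacle).} I expect the hardest step to be showing that the four involution classes are pairwise non-conjugate. My first attempt would be to reduce modulo $2$ (or modulo $(1+i)^3$) and show that the images of $\begin{pmatrix} i&\beta\\0&-i\end{pmatrix}$ are non-conjugate in $\mathrm{PSL}(2,\mathbb{Z}[i]/(2))$ for distinct $\beta$. A finer invariant might be needed, for example the $\mathrm{PSL}(2,\mathbb{Z}[i])$-orbit of the fixed geodesic $\{\beta/2\}\times\mathbb{R}_{>0}$, read off from the Picard fundamental polyhedron. The geometric argument would show these four vertical axes lie on different edges or faces of the quotient orbifold. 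I would try the mod-$2$ invariant first and fall back on the fundamental-domain picture.
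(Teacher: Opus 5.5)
The paper gives no proof of this statement; it quotes it from \cite{nn}, so your argument has to stand on its own. Much of it does. The trace argument for the orders is correct. So is the involution descent: translations and $z\mapsto -1/z$ force $|c|^2\le 2$, and in those cases one can always translate $a$ to $\pm i$, which makes $b=0$. The reduction of $\beta$ modulo $2\mathbb{Z}[i]$ is also correct.

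\textbf{The gap is the order-three normalization.} Diagonal elements of $\mathrm{SL}(2,\mathbb{Z}[i])$ are $\mathrm{diag}(u,u^{-1})$ with $u$ a unit, and conjugating by them sends $c\mapsto u^{-2}c=\pm c$. Changing the lift also only changes signs. So after fixing the trace to be $1$ and translating to $a=0$, two candidates remain: $u_3=\begin{pmatrix}0&-1\\1&1\end{pmatrix}$ and $v=\begin{pmatrix}0&i\\ i&1\end{pmatrix}$, i.e.\ $z\mapsto 1/(z-i)$.

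No upper triangular element of $\mathcal P$ (these are $z\mapsto \pm z+n$) relates them. The axis of $u_3$ lies in the vertical plane $\mathrm{Re}\,z=-\tfrac12$, while that of $v$ lies in $\mathrm{Im}\,z=\tfrac12$. The map that would rescale $c$ by $i$ is $z\mapsto iz$. It does conjugate $v$ to $u_3$, but it lies in $\mathrm{PGL}(2,\mathbb{Z}[i])$, not in $\mathcal P$. It also conjugates $z\mapsto -z+1$ to $z\mapsto -z+i$, so using it would merge two of the classes you want to separate.

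The missing step is a non-triangular conjugation. The matrix $\begin{pmatrix}1&0\\1&1\end{pmatrix}v\begin{pmatrix}1&0\\1&1\end{pmatrix}^{-1}=\begin{pmatrix}-i&i\\-1&1+i\end{pmatrix}$ has $c=-1$, and then your normalization applies. Explicitly, $hv=u_3h$ with $h=\begin{pmatrix}1+i&1\\-i&-i\end{pmatrix}\in\mathrm{SL}(2,\mathbb{Z}[i])$.

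A smaller point: "the same descent" with $|a|^2\le|c|^2/2$ only gives $|c|^2\le 5$ in the order-three case. If you reduce $a-\tfrac12$ instead, using $bc=-\bigl((a-\tfrac12)^2+\tfrac34\bigr)$, you get $|c|^2\le\tfrac32$ at once.

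\textbf{Distinctness.} You leave this as a plan, but your first idea already works, so the fundamental-domain fallback is not needed. Let $R=\mathbb{Z}[i]/(2)$. Since $-I\equiv I \pmod 2$, the $\mathrm{SL}(2,R)$-conjugacy class of the reduction is an invariant of the $\mathcal P$-class. As $-i\equiv i$, the image of $\begin{pmatrix}i&\beta\\0&-i\end{pmatrix}$ is $i\,U_{i\beta}$, where $U_x=\begin{pmatrix}1&x\\0&1\end{pmatrix}$, and $i\beta$ runs over $0,1,i,1+i$ modulo $2$. The four images are pairwise non-conjugate:
\begin{itemize}
\item The scalar $i$ is central, so it suffices to compare the $U_x$.
\item $U_0=I$ is central.
\item $U_{1+i}-I$ has entries in the ideal $(1+i)R$. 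This property is stable under conjugation and is not shared by $U_1$ or $U_i$.
\item If $gU_1=U_ig$ with $g=\begin{pmatrix}p&q\\r&s\end{pmatrix}$, then $r=0$ and $p=is$. Hence $\det g=is^2=i\neq 1$, because the units $1,i$ of $R$ both square to $1$.
\end{itemize}
Note that $U_1$ and $U_i$ are conjugate in $\mathrm{GL}(2,R)$ by $\mathrm{diag}(i,1)$. So the determinant-one condition is exactly what separates $u_{2,2}$ from $u_{2,3}$. This mirrors the $\mathrm{PGL}$ issue in the order-three step.
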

	Let $ \pi: \rm{SL}(2,\z[\it{i}])\longrightarrow\rm{PSL}(2,\z[\it{i}]) $ be the quotient map. We will discuss the reversibility of the Picard group \Pic, by looking at its lifts in $\rm{SL}(2,\z[\it{i}])$, i.e., the linear representations of the elements in \Pic. For each element $g\in \rm PSL(2,\z[\it{i}])$, there are two lifts $\tilde{g}$ and $-\tilde{g}$. We will choose of of these lifts and work in the linear group.

	Also, from now onwards, we will write the involutions $u_{2,j}$ as $u_{j}$ throughout the chapter.
	\begin{lemma}\label{3.4}
		An element $ g\in$ \Pic ~ is reciprocal if and only if $ g $ satisfies one of the following conditions:
		\begin{enumerate}
			\item $ g $ is an elliptic element.
            \item $g$ is a parabolic element.
			\item $ g $ is a loxodromic, and is conjugate to one of the following forms,$$ \begin{bmatrix}
				d&&b\\
				c&&d
			\end{bmatrix}, \begin{bmatrix}
				c+d&&b\\
				c&&d
			\end{bmatrix},~\begin{bmatrix}
				ic+d&&b\\
				c&&d
			\end{bmatrix},~\begin{bmatrix}
				(1+i)c+d&&b\\
				c&&d
			\end{bmatrix} ~~ ,$$
		\end{enumerate}
		where $ b,c,d\in \z[i] $.
	\end{lemma}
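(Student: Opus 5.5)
By \propref{pro}, $g\in\mathcal P$ is reversible if and only if it is strongly reversible, i.e.\ there is an involution $u\in\mathcal P$ with $ugu^{-1}=g^{-1}$. If $u$ reverses $g$, then $kuk^{-1}$ reverses $kgk^{-1}$. By \thmref{3.1} every involution is conjugate to one of $u_1,\dots,u_4$. Hence $g$ is reversible if and only if some conjugate of $g$ is reversed by one of the four involutions $u_j$. The plan is to treat the three types of elements separately, working with lifts in $\mathrm{SL}(2,\mathbb Z[i])$ and always allowing the sign ambiguity $\pm$.

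\emph{Elliptic case.} Elliptic elements of a discrete group have finite order. By \thmref{3.1} they are conjugate to some $u_{j}$ or to $u_3^{\pm1}$, where $u_3$ is the order-$3$ element $z\mapsto -1/(z+1)$. Involutions are trivially reversible. For order $3$ it suffices to exhibit one reverser of $u_3$. I would use the involution $h=\begin{bmatrix}0&i\\ i&0\end{bmatrix}$, which has determinant $1$ and trace $0$. A direct multiplication gives
\[
h\begin{bmatrix}0&-1\\1&1\end{bmatrix}h=\begin{bmatrix}-1&-1\\1&0\end{bmatrix}=-\begin{bmatrix}0&-1\\1&1\end{bmatrix}^{-1},
\]
so $h u_3 h^{-1}=u_3^{-1}$ in $\mathcal P$.

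\emph{Parabolic case.} A parabolic $g$ fixes a cusp. Since $\mathbb Z[i]$ has class number one, every cusp is $\mathcal P$-equivalent to $\infty$, so after conjugation $g$ fixes $\infty$. It is then upper triangular with unit diagonal entries $\epsilon,\epsilon^{-1}$. Being parabolic forces $\epsilon^2=1$, so projectively $g(z)=z+b$ with $0\ne b\in\mathbb Z[i]$. The involution $u_1:z\mapsto -z$ conjugates $g$ to $z\mapsto z-b=g^{-1}$.

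\emph{Loxodromic case.} Let $g$ be loxodromic with lift $\begin{bmatrix}a&b\\c&d\end{bmatrix}$. First, $c\neq0$: otherwise $a,d$ are units with $ad=1$, so $|\tr|\le 2$ with $\tr\in\{0,\pm2,\pm2i\}$, which is not loxodromic. Write $u_j$ as $\begin{bmatrix}i&-it\\0&-i\end{bmatrix}$ with $t\in\{0,1,i,1+i\}$. I will compute $u_j\,g\,u_j^{-1}$ and compare it with $\pm g^{-1}=\pm\begin{bmatrix}d&-b\\-c&a\end{bmatrix}$.
\begin{itemize}
\item The sign $-$ forces the lower-left entries to satisfy $c=0$, or the trace to vanish; both are impossible for a loxodromic element.
\item The sign $+$ yields, after comparing entries, the single condition $a-d=tc$; the remaining entries then agree automatically because $ad-bc=1$.
\end{itemize}
Consequently $g$ is reversed by $u_j$ exactly when $a=tc+d$, which is the $j$-th form in the statement. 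Combined with the conjugation remark above, this gives both directions: every reversible loxodromic element is conjugate to one of the four forms, and each form is reversed by the corresponding $u_j$.

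The main obstacle I expect is this last matrix comparison: I have to check that the off-diagonal entries impose no extra constraint beyond $a-d=tc$. I would first verify the case $t=0$, where the computation is $u_1gu_1^{-1}=\begin{bmatrix}a&-b\\-c&d\end{bmatrix}$. Then I would reduce the general case to it by formally conjugating with the translation $z\mapsto z+t/2$ in $\mathrm{PSL}(2,\mathbb C)$. This translation carries $u_1$ to $u_j$ and changes $a-d$ by $tc$, so the condition can be read off from the $t=0$ case even though the translation itself does not lie in $\mathcal P$.
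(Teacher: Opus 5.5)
Your proposal is correct and follows the paper's own proof: reduce via \propref{pro} and \thmref{3.1} to reversal by some $u_j$, exhibit explicit reversers in the elliptic and parabolic cases, and in the loxodromic case compare entries under both signs to get $a-d=tc$. The paper compares $Ug$ with $\pm g^{-1}U$ rather than $UgU^{-1}$ with $\pm g^{-1}$, and then the off-diagonal entries coincide identically, so neither the determinant nor your translation trick is needed; one small slip to fix is that for $c=0$ the trace is $a+a^{-1}\in\{0,\pm2\}$ (never $\pm2i$, which would in fact be loxodromic), so the right criterion is $\mathrm{tr}\in[-2,2]$, not $|\mathrm{tr}|\le 2$.
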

	\begin{proof}
		First, we choose $g$ as elliptic element in \Pic. It is trivial if $g$ is an involution. If $g$ is a non-involution element in Picard group $\mathcal P$, i.e., it is of order $3$, then it is conjugate to $u_3$ from \thmref{3.1}. Also $u_3$ is a product of two involutions which are $z\mapsto 1/z$ and $z\mapsto \frac{-z}{z+1}.$ Therefore every elliptic elements are reversible in Picard group.

		If we consider $g$ as a parabolic element in \Pic. The parabolic elements in $\mathcal{P}$ are of the form $z\mapsto z+k$, up to conjugacy, where $k\in \z[i]$. They can be seen as the product of two involutions, $z\mapsto -z$ and $z\mapsto-z+k$. Then it is clear that any parabolic element is reciprocal and can be seen as the form $\begin{bmatrix}
			1&& k\\ 0&& 1
		\end{bmatrix}$ upto conjugacy.

		Finally, we choose $g$ as loxodromic element in $\mathcal{P}$, then one of the lifts of $ g $  in $ \rm SL(2,\mathbb{Z}[\it{i}]) $ is   Consider $g$ to be reciprocal with reverser $h$. That means $hgh^{-1}=g^{-1}.$ Now by \thmref{3.1}, we know $h=\eta u_j\eta^{-1}$ for some $\eta\in \mathcal P$ and $j$.  
	That implies $\eta^{-1} g\eta$ as a reversible element with the reverser $u_j$ for some $j$, then
		$$u_{j}\eta^{-1}g\eta u_{j}^{-1}=\eta^{-1}g^{-1}\eta.$$ Also, one of the lifts of $\eta^{-1} g\eta$ and $ u_{j} $ in $\rm SL(2,\z[\it{i}])$ are in the form of $ \begin{bmatrix}
			a&&b\\
			c&&d
		\end{bmatrix}$ and$ \begin{bmatrix}
			i&&-ki\\
			0&&-i
		\end{bmatrix} $ respectively, where $k =0,1,i$ or $i+1.$
		That means
		$$\begin{bmatrix}
			i&&-ki\\
			0&&-i
		\end{bmatrix}\begin{bmatrix}
			a&&b\\
			c&&d
		\end{bmatrix}\begin{bmatrix}
			i&&-ki\\
			0&&-i
		\end{bmatrix}^{-1}=\pm \begin{bmatrix}
			a&&b\\
			c&&d
		\end{bmatrix}^{-1}$$
		$$\imp\begin{bmatrix}
			i&&-ki\\
			0&&-i
		\end{bmatrix}\begin{bmatrix}
			a&&b\\
			c&&d
		\end{bmatrix}=\pm\begin{bmatrix}
			d&&-b\\
			-c&&a
		\end{bmatrix}\begin{bmatrix}
			i&&-ki\\
			0&&-i
		\end{bmatrix} $$
		$$\imp \begin{bmatrix}
			ai-cki&&bi-dki\\
			-ci&&-di
		\end{bmatrix}=\pm \begin{bmatrix}
			di&&bi-dki\\
			-ci&&-ai+cki
			
		\end{bmatrix}$$
		\tb{Case 1:}
		$$ \begin{bmatrix}
			ai-cki&&bi-dki\\
			-ci&&-di
		\end{bmatrix}=- \begin{bmatrix}
			di&&bi-dki\\
			-ci&&-ai+cki
			
		\end{bmatrix}.$$
		Then $c=0$  and $ai=-di +cki\imp a+d=0.$ This implies $ g $ must be an involution which is contradiction to our assumption.\\
		\tb{Case 2:}
		$$ \begin{bmatrix}
			ai-cki&&bi-dki\\
			-ci&&-di
		\end{bmatrix}= \begin{bmatrix}
			di&&bi-dki\\
			-ci&&-ai+cki
			
		\end{bmatrix}.$$
		This implies that $ ai-cki=di\imp k=\dfrac{a-d}{c} .$ So, the lifts of $ \eta^{-1}g\eta$ are in the form of $\pm \begin{bmatrix}
			d+ck&&b\\
			c&&d
		\end{bmatrix}$, where $k$ is either $0,1,i$ or $i+1$.
	\end{proof}
\subsection{Proof of \thmref{thm6}}
Since the elliptic and parabolic cases are trivial, we may consider $g$ as a non-elliptic element in $\mathcal{P}$. By  \lemref{3.4}, we obtain, if $g\in\mathcal{P}$ is reversible then $g$ is conjugate to those elements which are of the from $\begin{bmatrix}
			d+ck&&b\\
			c&&d
		\end{bmatrix} $, where $k\in\{0,1,i,i+1\}.$  So, the theorem follows from the function $f_A$ by applying it to the matrices.
        
\qed
\begin{rk}
    Let $\gamma$, being one of the matrices from the previous theorem, also satisfy the following condition,
    \[
    J_k^{-1}\gamma J_k=\gamma^t, \qquad\text{ where } J_k=
    \begin{bmatrix}
       k&&1\\
       1&&0
    \end{bmatrix}~\text{and}~ k\in\{0,1,i,i+1\}~.
    \]The converse is also true. The proof is elementary but it shows how much these matrices close to symmetric matrices.
\end{rk}

	\section{Centraliser of the reciprocal elements in Picard group}\label{sec5}
	In this section, we have deduced the centraliser of elliptic and loxodromic elements. Let $g$ be the element in the group $G$, then the centraliser of $g$, i.e., $$ C(g)=\{~h\in G~|~ gh=hg~\}.$$ The set $C(g)$ will be from a subgroup of $ G$. In the following lemma, we have shown that the centraliser of involutions in the Picard modular group will always form a Klein-$4$ group. 
	\begin{lemma}\label{4.1}
		If $g$ is an elliptic element of order $2$ in the Picard group $\mathcal{P}$, $C(g)$ is a Klein-$4$ group.
	\end{lemma}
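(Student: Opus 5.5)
By \thmref{3.1}, an elliptic element $g$ of order $2$ is conjugate in $\mathcal P$ to some $u_j:z\mapsto -z+k$ with $k\in\{0,1,i,1+i\}$. Centralisers of conjugate elements are conjugate, so $C(g)\cong C(u_j)$, and it suffices to compute $C(u_j)$ directly. I would work with the lift $U=\begin{bmatrix} i & -ki\\ 0 & -i\end{bmatrix}$ used in the proof of \lemref{3.4}. An element $h\in\mathcal P$ with lift $H=\begin{bmatrix} a&b\\ c&d\end{bmatrix}\in \mathrm{SL}(2,\z[i])$ commutes with $u_j$ in $\mathcal P$ if and only if $HU=\pm UH$. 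So I split the computation into the two sign cases, exactly as in the proof of \lemref{3.4}.

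\textbf{Case $HU=UH$.} Comparing the $(2,1)$ entries gives $ci=-ci$, hence $c=0$. Then $ad=1$ forces $a,d$ to be units with $d=a^{-1}$. The $(1,2)$ entries give $2b=k(d-a)$.
\begin{itemize}
\item If $a=d=\pm1$, then $b=0$ and $h$ is the identity.
\item If $a=-d=\pm i$, then $b=\mp ki$ and $h=u_j$.
\end{itemize}
So this case contributes exactly $\{\mathrm{Id},u_j\}$.

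\textbf{Case $HU=-UH$.} Comparing entries gives $2a=kc$, $2d=-kc$ and $k(a+d)=0$; in particular $d=-a$.
\begin{itemize}
\item If $k=0$, then $a=d=0$ and $bc=-1$. Up to sign this gives exactly two elements, $z\mapsto -1/z$ and $z\mapsto 1/z$ (lifts $\begin{bmatrix}0&1\\-1&0\end{bmatrix}$ and $\begin{bmatrix}0&i\\ i&0\end{bmatrix}$).
\item If $k\neq0$, then $c=2a/k$, and $c\neq0$ since otherwise $a=d=0$, contradicting $ad-bc=1$ with $c=0$. The determinant condition $-a^2-bc=1$ becomes $b=-(1+a^2)/c=-k(1+a^2)/(2a)$.
\end{itemize}
The main obstacle is to show that in this case $a$ must be a unit, so that one gets exactly two solutions up to sign. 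My first attempt is a norm argument: $k\mid 2$ in $\z[i]$ and $kc=2a$, so $c$ is a fixed multiple of $a$, namely $c=(2/k)a$. Substituting, $-a^2-b(2/k)a=1$ shows $a\mid 1$, so $a$ is a unit. Then $1+a^2\in\{0,2\}$.
\begin{itemize}
\item For $a=\pm i$ we get $b=0$ and $c=\pm 2i/k$.
\item For $a=\pm1$ we get $b=\mp k$ and $c=\pm 2/k$.
\end{itemize}
All these entries lie in $\z[i]$ because $k\mid 2$. This yields exactly two elements of $\mathcal P$ in this case.

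Hence $|C(u_j)|=4$. Every element found has a lift of trace $a+d=0$, so every non-identity element is an involution. A group of order $4$ in which every non-identity element has order $2$ is the Klein $4$-group. As a check, the two elements from the second case multiply to $\pm U$.
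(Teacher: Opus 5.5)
Your proof is correct and follows the same route as the paper: reduce to the representatives $u_j$ via Theorem~\ref{3.1}, take a lift, and solve $HU=\pm UH$ in the two sign cases. Two of your steps justify things the paper leaves implicit. When $k\neq0$, you show $a$ must be a unit via $k\mid 2$ and $a\mid 1$, whereas the paper's table simply assumes $d$ is a unit. You also use the trace-zero argument to show the resulting order-$4$ group is the Klein group rather than cyclic.
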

	
	\begin{proof}
To prove the lemma, we first need to recall the involutions $u_i$'s from \thmref{3.1} that those are only four involutions up to conjugacy. So, it suffices to show that the lemma is true for these four elements. The lifts of the involutions $u$ in $\rm SL(2,\mathbb{C}$), are of the form 
$U=\begin{bmatrix}
	-i & i k\\
	0 & i
\end{bmatrix}$, where $k\in \{0,1,i,i+1\}.$

Let we choose $g$ as an element in $C(u)$, then one of the lifts of $ g $  in $ \rm SL(2,\mathbb{Z}[\it{i}]) $ is  $ \tilde{g}=\begin{bmatrix}
	a&&b\\
	c&&d
\end{bmatrix}.$
Then,
$$ \begin{bmatrix}
	a&&b\\
	c&&d
\end{bmatrix}
\begin{bmatrix}
	-i & i k\\
	0 & i
\end{bmatrix}
=\pm \begin{bmatrix}
	-i & i k\\
	0 & i
\end{bmatrix}
\begin{bmatrix}
	a&&b\\
	c&&d
\end{bmatrix}
$$
$$\imp \begin{bmatrix}
	-ia & ib+iak \\
	-ic & id+ick
\end{bmatrix}=\pm \begin{bmatrix}
	-ia+ick & -ib+idk \\
	ic & id
\end{bmatrix}.$$
First, we consider 
$$ \begin{bmatrix}
	-ia & ib+iak \\
	-ic & id+ick
\end{bmatrix}= \begin{bmatrix}
	-ia+ick & -ib+idk \\
	ic & id
\end{bmatrix}.$$
In this case, we get, $c=-c\imp c=0.$ So, $ad=1$ in $\mathbb{Z}[i].$ Since, there are only four units in $\z[i]$, then the choices of $a$ would be from this set $\{1,-1,i,-i\}$ and we get the values of $d$ accordingly. Also, $$ib+iak=-ib+idk$$
$$\imp 2b=(d-a)k.$$
From the Table 1, $\tilde{g}$ will be any of the matrices 
$$ \pm Id, \qquad \pm U=\pm \begin{bmatrix}
	-i & i k\\
	0 & i
\end{bmatrix}$$

\begin{table}[h]\label{tb:5}
	
	\begin{tabular}{|c|c|c|}
		\hline
		$a$ & $d$ & $b$ \\
		\hline
		$1$ & $1$ & $0$ \\
		\hline
		$-1$ & $-1$ & $0$ \\
		\hline
		$i$ & $-i$ & $-ki$ \\
		\hline
		$-i$ & $i$ & $ki$ \\
		\hline
	\end{tabular}
	\caption{Values of $d$ and $b$ given constraints $ad=1$ and $k(d-a)=2b$}
\end{table}

 Now we consider, 
 $$ \begin{bmatrix}
-ia & ib+iak \\
-ic & id+ick
\end{bmatrix}=- \begin{bmatrix}
-ia+ick & -ib+idk \\
ic & id
\end{bmatrix}.$$
From the $(1,2)$ entries of the both matrices, we get $$ ib+iak= ib-idk. $$ 
$$\imp a=-d. $$ And from the $(2,2)$ entries of the both matrices, we get $$ -id= id+ick \imp ck=-2d.$$ If $k=0$ then, $d=0=a$. This implies $ad-bc=1\imp bc=-1.$ Again, there are $4$ units in $\mathbb{Z}[i].$
 Therefore the matrices are,  $\pm \begin{bmatrix}
 	0 & i \\
 	i & 0
 \end{bmatrix},\pm  \begin{bmatrix}
 0 & -1\\
 1 & 0
 \end{bmatrix}$ in the case of $k=0.$ Therefore,  For $k = 0$, i.e.,  the elements of the $C(u_{1})$ are
 
  $$\pm \begin{bmatrix} 0& i \\ i & 0 \end{bmatrix},\pm \begin{bmatrix} 0 & -1 \\ 1 & 0\end{bmatrix}, u_1, Id.$$
  Now we can assume $k\neq 0.$ Now we have the table,

	\begin{table}[h]
		\begin{tabular}{|c|c|c|c|c|}
			\hline
			$k$ & $d$ & $a$ & $b$ & $c$ \\
			\hline
			$1$ & $1$ & $-1$ & $1$ & $-2$ \\
			\hline
			$1$ & $-1$ & $1$ & $-1$ & $2$ \\
			\hline
			$1$ & $i$ & $-i$ & $0$ & $-2i$ \\
			\hline
			$1$ & $-i$ & $i$ & $0$ & $2i$ \\
			\hline
			$i$ & $1$ & $-1$ & $i$ & $2i$ \\
			\hline
			$i$ & $-1$ & $1$ & $-i$ & $-2i$ \\
			\hline
			$i$ & $i$ & $-i$ & $0$ & $-2$ \\
			\hline
			$i$ & $-i$ & $i$ & $0$ & $2$ \\
			\hline
		$1+i$ & $1$ & $-1$ & $1+i$ & $-1+i$ \\
		\hline
		$1+i$ & $-1$ & $1$ & $-(1+i)$ & $1-i$ \\
		\hline
		$1+i$ & $i$ & $-i$ & $0$ & $i(-1+i)$ \\
		\hline
		$1+i$ & $-i$ & $i$ & $0$ & $1+i$ \\
			\hline
		\end{tabular}
		\caption{Values of $a$, $b$, $c$ for given $k$ and $d$, satisfying $a=-d$, $-2d=ck$, and $-d^2-bc=1$}
	\end{table}
	 Now for $k = 1$, i.e.,  the elements of the $C(u_{2})$ are
	 $$\pm \begin{bmatrix} -1 & 1 \\ -2 & 1 \end{bmatrix}, \pm \begin{bmatrix} -i & 0 \\ -2i & i \end{bmatrix}, u_{2},Id.$$
	 
	 For $k = i$, i.e.,  the elements of the $C(u_{3})$ are
	 $$\pm \begin{bmatrix} -1 & i \\ 2i & 1 \end{bmatrix}, \pm \begin{bmatrix} -i & 0 \\ -2 & i \end{bmatrix}, u_3, Id.$$
	 
For $k = i+1$, i.e.,  the elements of the $C(u_{4})$ are
	 $$\pm \begin{bmatrix} -1 & 1+i \\ i-1 & 1 \end{bmatrix},\pm \begin{bmatrix} -i & 0 \\ i(i-1) & i \end{bmatrix}, u_4, Id.$$

Thus, for each case of $k$, the centralisers of the involutions are Klein-$4$ group.

	\end{proof}

		We need \lemref{4.1} to deduce the number of special reversible elements appears in a reversible class. Also we need this following classical theorem.
        \begin{theorem}[\textbf{H. Sato}]\cite{MR1643761}\label{ele}
        
 { The elementary Kleinian groups $G$ with two limit points, that is, the elementary Kleinian groups containing loxodromic (hyperbolic) transformations.}
\begin{enumerate}
\item $\mathrm{II}_1$: {A loxodromic (hyperbolic) cyclic group.}
\item $\mathrm{II}_2: G = \langle z \mapsto Kz,\ z \mapsto e^{2\pi i/n} z \rangle ~~ \left(|K| \neq 1,\ n \geq 2\right)$.
\item $\mathrm{II}_3: G = \langle z \mapsto Kz,\ z \mapsto 1/z \rangle~~  (|K| \neq 1)$.
\item $\mathrm{II}_4$: $G = \langle z \mapsto Kz,\ z \mapsto e^{2\pi i/n} z,\ z \mapsto 1/z \rangle$  $(|K| \neq 1,\ n \geq 2)$.
\end{enumerate}

        \end{theorem}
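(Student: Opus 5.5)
We would start by normalizing. Let $G$ be an elementary Kleinian group whose limit set $\Lambda(G)$ consists of exactly two points. Conjugating in $\mathrm{PSL}(2,\mathbb{C})$, we may assume $\Lambda(G)=\{0,\infty\}$. The limit set is $G$-invariant, so every element of $G$ either fixes both $0$ and $\infty$ or interchanges them. Let
\[
G_0=\{g\in G: g(0)=0,\ g(\infty)=\infty\}.
\]
This is the kernel of the induced homomorphism $G\to \mathrm{Sym}(\{0,\infty\})$, so $[G:G_0]\le 2$. Each $g\in G_0$ has the form $z\mapsto \lambda z$ with $\lambda\in\mathbb{C}^*$. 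Hence $G_0$ is identified with a subgroup $L\subset\mathbb{C}^*$, and discreteness of $G$ in $\mathrm{PSL}(2,\mathbb{C})$ makes $L$ a discrete subgroup of $\mathbb{C}^*$. Since the limit set is nonempty, $G$ contains a loxodromic element $g$. Then $g^2\in G_0$ is loxodromic, so $L$ contains some $\lambda$ with $|\lambda|\neq1$.

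The next step is the structure of $L$, which we regard as the main point; everything else is bookkeeping. We would use the isomorphism $\mathbb{C}^*\cong \mathbb{R}\times S^1$, $\lambda\mapsto(\log|\lambda|,\arg\lambda)$.
\begin{enumerate}
\item The torsion part $L\cap S^1$ is a discrete subgroup of the circle. It is therefore finite and cyclic, equal to $\mu_n=\langle e^{2\pi i/n}\rangle$ for some $n\ge1$. These are exactly the elliptic elements of $G_0$.
\item The image of $L$ under $\lambda\mapsto\log|\lambda|$ is a nontrivial subgroup of $\mathbb{R}$. We would show it is discrete: if $\log|\lambda_j|\to0$ with $\lambda_j$ distinct, then compactness of $S^1$ gives a subsequence with $\lambda_j\lambda_{j'}^{-1}\to1$, contradicting discreteness of $L$.
\item So this image is infinite cyclic. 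Choose $K\in L$ whose image generates it; then $|K|\neq1$, and we obtain $L=\langle K\rangle\times\mu_n$.
\end{enumerate}
Thus $G_0$ is either the loxodromic cyclic group $\langle z\mapsto Kz\rangle$ (when $n=1$, type $\mathrm{II}_1$), or $\langle z\mapsto Kz,\ z\mapsto e^{2\pi i/n}z\rangle$ with $n\ge2$ (type $\mathrm{II}_2$).

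If $G=G_0$ we are done. Otherwise pick $\sigma\in G\setminus G_0$. It interchanges $0$ and $\infty$, so $\sigma(z)=a/z$ for some $a\in\mathbb{C}^*$. We then conjugate by the dilation $\delta(z)=tz$ with $t^2=a$. A direct computation gives $\delta^{-1}\sigma\delta(z)=1/z$. Dilations commute with $G_0$, so this conjugation leaves the generators $Kz$ and $e^{2\pi i/n}z$ unchanged, and it preserves $\Lambda=\{0,\infty\}$. Since $G=G_0\cup\sigma G_0$, after this conjugation
\[
G=\langle G_0,\ z\mapsto 1/z\rangle.
\]
This is of type $\mathrm{II}_3$ if $n=1$ and of type $\mathrm{II}_4$ if $n\ge2$. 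Note that $z\mapsto 1/z$ normalizes $G_0$, since it conjugates $\lambda z$ to $\lambda^{-1}z$, so the presentation is consistent.

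For completeness we would also check the converse: each listed group is discrete, elementary, and has limit set $\{0,\infty\}$. Discreteness holds because the index-at-most-$2$ subgroup $\langle K\rangle\times\mu_n$ is discrete in $\mathbb{C}^*$. The limit set is $\{0,\infty\}$ because orbits accumulate only under the powers $K^{\pm m}$.
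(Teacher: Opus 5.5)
The paper gives no proof of this statement. It is quoted from Sato and used only in the centralizer lemma of Section~5. Your argument is a correct, self-contained proof along the standard lines:
\begin{enumerate}
\item normalize $\Lambda(G)=\{0,\infty\}$ and pass to the index-$\le 2$ subgroup $G_0$ fixing both points;
\item identify $G_0$ with a discrete subgroup $L\subset\mathbb{C}^*\cong\mathbb{R}\times S^1$ and split it as $\langle K\rangle\times\mu_n$;
\item observe that any element swapping $0$ and $\infty$ has the form $z\mapsto a/z$, hence is an involution, and is conjugated to $z\mapsto 1/z$ by a dilation commuting with $G_0$.
\end{enumerate}

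One justification should be tightened. A nonempty limit set does not by itself force a loxodromic element, since parabolic cyclic groups have one-point limit sets. Here the conclusion does follow from your own step (1): $\Lambda(G)\neq\emptyset$ makes $G$, hence $G_0$, infinite, while $L\cap S^1$ is finite, so $L$ contains some $\lambda$ with $|\lambda|\neq 1$. Alternatively, a parabolic cannot preserve $\{0,\infty\}$: it cannot fix both points, and if it swapped them its square would be a parabolic fixing both.

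Proving the result rather than citing it pays off for the paper's application. For a loxodromic $g$ in a Kleinian group $\Gamma$, any element of $C(g)$ must fix both fixed points of $g$. Swapping them would exchange the attracting and repelling points of $hgh^{-1}=g$. Conversely, everything fixing both points commutes with $g$. So $C(g)$ equals the stabilizer of the two fixed points, and your steps (1)--(3) show directly that it is $\mathbb{Z}\times\mathbb{Z}/n\mathbb{Z}$, i.e.\ of type $\mathrm{II}_1$ or $\mathrm{II}_2$. This needs no appeal to the full list, nor to the paper's somewhat loose remark about attracting and repelling points.
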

         We now deduce the centraliser of a loxodromic element by following lemma.
        \begin{lemma}Let $g$ be a primitive loxodromic element in \Pic. Then The centraliser of $g$ is either $\z$ or $\z\oplus\z/3\z.$ Moreover, there are at most three primitive loxodromic elements up to conjugacy which has centraliser as  $\z\oplus\z/3\z.$

        \end{lemma}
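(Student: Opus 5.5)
The plan is to show that the centraliser is the stabiliser of the axis of $g$ inside $\mathcal P$, identify that stabiliser using \thmref{ele}, and then reduce everything with torsion to the centraliser of the order-$3$ element $u_3$. The steps below are in the order I would carry them out.

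\textbf{Step 1: $C(g)$ fixes the fixed points of $g$.} If $h\in C(g)$ with $h\neq \mathrm{Id}$, then $h$ maps $Fix(g)$ to itself. Since $[g,h]=I$, \thmref{2.2.1} gives $Fix(h)=Fix(g)$. So every element of $C(g)$ fixes both fixed points of $g$. Conversely, every element fixing both points commutes with $g$, since after conjugating the points to $0,\infty$ everything is diagonal. Hence $C(g)$ is exactly the pointwise stabiliser in $\mathcal P$ of the two endpoints of the axis of $g$.

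\textbf{Step 2: the structure $\mathbb Z\oplus\mathbb Z/n\mathbb Z$.} The group $C(g)$ is discrete, elementary and contains a loxodromic element, so \thmref{ele} applies. Types $\mathrm{II}_3$ and $\mathrm{II}_4$ contain the map $z\mapsto 1/z$, which swaps the two fixed points, so they are excluded. Conjugating the fixed points to $0,\infty$, we get $C(g)=\langle z\mapsto Kz\rangle\times\langle z\mapsto e^{2\pi i/n}z\rangle\cong \mathbb Z\oplus \mathbb Z/n\mathbb Z$, with $n\geq 1$.

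\textbf{Step 3: only $n=1$ or $n=3$.} The torsion part consists of elliptic elements of $\mathcal P$ with axis equal to the axis of $g$. By \thmref{3.1} these have order $2$ or $3$, so $n\in\{1,2,3\}$; orders $4$ and $6$ cannot occur.
\begin{itemize}
\item If $n=2$, the involution $u$ generating the torsion would commute with $g$. Then $g\in C(u)$, which is a Klein-$4$ group by \lemref{4.1} and so contains no loxodromic element. This is a contradiction.
\item Hence $n=1$ or $n=3$, giving $C(g)\cong\mathbb Z$ or $\mathbb Z\oplus\mathbb Z/3\mathbb Z$.
\end{itemize}

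\textbf{Step 4: reducing the torsion case to $C(u_3)$.} Suppose $C(g)\cong\mathbb Z\oplus\mathbb Z/3$. By \thmref{3.1} its order-$3$ element is conjugate to $u_3$. After conjugating, $g\in C(u_3)$, and $C(u_3)$ is the stabiliser of the axis of $u_3$.

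I would compute $C(u_3)$ explicitly, as in \lemref{4.1}, by solving $XU=\pm UX$ with $U=\begin{bmatrix}0&-1\\1&1\end{bmatrix}$. This gives $X=\alpha I+\beta U$ with $\alpha,\beta\in\mathbb Z[i]$ and $\alpha^2+\alpha\beta+\beta^2=1$. This is a relative norm equation from $\mathbb Z[\zeta_{12}]$ to $\mathbb Z[i]$. Its solutions form a group of rank one modulo torsion, which confirms that $C(u_3)=\langle g_0\rangle\times\langle u_3\rangle$ for a primitive loxodromic $g_0$. The primitive loxodromic elements of $C(u_3)$ are then exactly the six elements $g_0^{\pm1}u_3^{j}$, $j=0,1,2$.

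\textbf{Step 5: the count of at most three classes.} To pass from six elements to three conjugacy classes, use the involution $s:z\mapsto 1/z$ from the proof of \lemref{3.4}. It satisfies $su_3s=u_3^{-1}$, so it preserves the axis of $u_3$ but swaps its endpoints. Therefore $s$ normalises $C(u_3)$ and sends $g_0$ to $g_0^{-1}u_3^{m}$ for some fixed $m$. This pairs the six elements into three orbits.

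Finally, two elements of $C(u_3)$ that are conjugate in $\mathcal P$ are conjugate by an element preserving the axis. Such an element lies in $C(u_3)\cup sC(u_3)$, because the full axis stabiliser is of type $\mathrm{II}_4$ by \thmref{ele}. So there are at most three conjugacy classes of such primitive loxodromic elements.

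The main obstacle is this last bookkeeping: pinning down the full normaliser of the axis and checking that no further identifications or splittings occur. Since the statement only claims ``at most three'', it suffices to exhibit the pairing by $s$.
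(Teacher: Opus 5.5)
Your proposal is correct and follows essentially the paper's route: Sato's classification gives $\mathbb{Z}\oplus\mathbb{Z}/n\mathbb{Z}$, Theorem~\ref{3.1} and Lemma~\ref{4.1} force $n\in\{1,3\}$, and the problem reduces to the single axis of $u_3$. Your final step is more careful than the paper's, which lists only $g, gh, gh^{2}$ and overlooks the inverses $g^{-1}h^{j}$; your pairing of the six elements $g_0^{\pm1}u_3^{j}$ by the reverser $z\mapsto 1/z$ accounts for them, and that reverser in fact inverts every element of $C(u_3)$. Note, however, that like the paper you implicitly take ``primitive'' to mean that $g$ generates $C(g)$ modulo torsion. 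If it only meant ``not a proper power'', the elements $g_0^{3^k}u_3$ ($k\ge 0$) would all be primitive with centraliser $C(u_3)$ and pairwise non-conjugate, so the bound of three depends on that convention.
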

        \begin{proof}
        It is easy to see that the cyclic group generated by $g$, is a subgroup of $C(\gamma)$. Now, let $h\in C(g)$ such that $h\notin\langle g\rangle$. Then $h$ either preserve the fixed points of $g$ or interchange it. Since $h$ is arbitrary in $C(g)\setminus\langle g\rangle$, then $C(g)$ forms a elementary Kleinian group. Also, attracting and repealing points of $hgh^{-1}$ is not the same as $g$ when $h$ interchange the fixed points of $g$. Then from the classification of elementary group for Kleinian groups \thmref{ele}, the centraliser $C(g)$ is either $\mathrm{II}_2$ or $\mathrm{II}_1$, i.e., 
        \[ C(g)=\langle g\rangle\oplus T\] where, $T$ is either trivial or a finite cyclic  generated by $h$.

        Our first claim is $T=\langle h\rangle\neq\z/2\z$. On contrary, if it is true then $g\in C(h)$. That contradicts \lemref{4.1}.
         Now, our claim is there are at most three primitive reversible loxodromic elements $g$ upto conjugacy such that $T=\z/3\z$. There is only one order 3 element upto conjugacy, that commutes with $g$. That means form \cite[Theorem 4.3.6]{MR698777} and  discreteness there are only three primitive elements $g$ with $Fix(g)=Fix(h)$, namely $g,~gh$ and
$gh^{2}$ (as $h^{3}=Id$). Therefore there are at most three primitive elements having
$\z\oplus\z/3\z$ as centraliser up to conjugacy. Note, however, that every power of such
an element has the same centraliser, so the exceptional classes are not finite in number;
what is true, and all that the counting below requires, is that only $O(\log T)$ of them
have $N\mu\le T$, the traces along a cyclic group growing geometrically.

        \end{proof}

	\begin{lemma}\label{lem: exact}
    There are exactly $8$ special elements conjugate in each loxodromic reversible classes in $\mathcal{P}$ with centraliser infinite cyclic group.
\end{lemma}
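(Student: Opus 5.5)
The plan is to turn the count of special elements into a count of involutions that reverse a fixed representative, and then use the centraliser computations of \lemref{4.1} and the preceding lemma. Call a loxodromic element $y\in\mathcal{P}$ \emph{special} if $f_y\in\{0,1,i,1+i\}$. The computation in the proof of \lemref{3.4} (Case 2) shows more: for $k\in\{0,1,i,1+i\}$, the involution $u_{k}$ (that is, $z\mapsto -z+k$) reverses $y$ if and only if $f_y=k$. Since $f_y$ is a single number, a special $y$ is reversed by exactly one of the four standard involutions. Fix a reversible loxodromic $g$ with $C(g)=\langle g\rangle$ infinite cyclic. Every element of its class has the form $y=xgx^{-1}$, where $x$ is determined modulo right multiplication by $C(g)$. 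Moreover $y$ is special with $f_y=k$ exactly when $\tau:=x^{-1}u_kx$ reverses $g$.

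First I describe all reversers of $g$. If $h$ and $h'$ both reverse $g$, then $h^{-1}h'\in C(g)$, so the set of reversers is the coset $\{g^nh : n\in\mathbb{Z}\}$. Here $h$ is an involution, by \propref{pro}. Each $g^nh$ is again an involution, since $(g^nh)^2=g^n(hg^nh)=g^ng^{-n}=1$. Conjugation by $C(g)$ acts on this set by $g^m(g^nh)g^{-m}=g^{2m+n}h$. Hence there are exactly two $C(g)$-orbits of reversing involutions, represented by $h$ and $gh$. They are different orbits, because $gh=g^{2m}h$ would force $g^{2m-1}=1$, which is impossible for loxodromic $g$.

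Next I set up the bijection. The map $y=xgx^{-1}\mapsto (k,\;[x^{-1}u_kx])$ sends special elements of the class to pairs consisting of an index $k$ and a $C(g)$-orbit of reversing involutions. For each of the two orbits, the involution $\tau$ is $\mathcal{P}$-conjugate to exactly one $u_k$, by \thmref{3.1}; this fixes $k$. Write $\tau=x_0^{-1}u_kx_0$. The classes $x\bmod C(g)$ with $x^{-1}u_kx$ in the orbit of $\tau$ are exactly those in the double coset $C(u_k)\,x_0\,C(g)$. Modulo $C(g)$ on the right, this double coset has
\[
|C(u_k)| \big/ \bigl|C(u_k)\cap x_0C(g)x_0^{-1}\bigr|
\]
elements. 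By \lemref{4.1}, $|C(u_k)|=4$. The intersection is trivial, because $x_0C(g)x_0^{-1}$ is infinite cyclic generated by a loxodromic element and is therefore torsion free, while $C(u_k)$ consists of torsion elements. So each orbit contributes $4$ classes of $x$. Distinct classes of $x$ modulo $C(g)$ give distinct $y=xgx^{-1}$, and distinct orbits give distinct $x$-classes, so the total is $4+4=8$.

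The main obstacle is the bookkeeping that makes this map well defined and bijective. One point is that the uniqueness of $k$ for a given special $y$ relies on the identity $f_y=k$ from \lemref{3.4}. The other is that the hypothesis $C(g)\cong\mathbb{Z}$ is used twice: to get exactly two orbits of reversing involutions, and to get the trivial intersection with the Klein four-group. In the $\mathbb{Z}\oplus\mathbb{Z}/3\mathbb{Z}$ case the reverser coset and the orbit count change, which is why those classes are excluded from the statement.
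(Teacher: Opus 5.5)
Your proof is correct and is essentially the paper's argument. In both, the reversers form the coset $C(g)h$ of involutions, split by parity into two $C(g)$-conjugacy classes, and each class contributes $4$ special elements through the Klein four-group $C(u_k)$ of \lemref{4.1}; your double-coset count $|C(u_k)|/|C(u_k)\cap x_0C(g)x_0^{-1}|$ is simply a cleaner way of getting the distinctness that the paper checks by hand.

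Two small corrections. First, the hypothesis only gives $C(g)\cong\mathbb{Z}$, not $C(g)=\langle g\rangle$. For non-primitive $g=\delta^{q}$ you should take $C(g)=\langle\delta\rangle$ and write the reversers as $\{\delta^{n}h\}$, using $h\delta h^{-1}=\delta^{-1}$ (since $h$ normalises $\langle\delta\rangle$ and inverts $\delta^{q}$); after that your argument is unchanged. Second, your closing remark is inaccurate. When $C(g)\cong\mathbb{Z}\oplus\mathbb{Z}/3\mathbb{Z}$, the same computation still gives exactly two orbits, and the intersection with $C(u_k)$ is still trivial because $C(g)$ has no involutions.
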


	\begin{proof}
	 Let us consider $\gamma$ to be a reversible element in $\mathcal{P}$. It is easy to check that $\gamma=\delta^q$ for some nonzero integer $q$ is a reversible loxodromic element if and only if $\delta$ is a reversible loxodromic element. Thus without loss of generality, we further consider $\gamma$ as primitive element. Then we have
		\begin{equation}\label{eq:7.1}
			S^{-1}\gamma S = \gamma^{-1} 
		\end{equation}
		for some involution $S.$ It is easy to see that any reverser of $\gamma $ is in $C(\gamma)\cdot S.$ 
		Hence,
		\begin{equation}\label{eq:7.2}
			\gamma^{k} S \gamma^{k} = S 
		\end{equation}
		for any nonzero $k\in\z$.
		Since $S$ is conjugate to one of the $u_i$'s, which we denote by $u$. That is,
		\begin{equation}\label{eq:7.3}
			\eta^{-1} S \eta = u \qquad \text{for some $\eta\in \mathcal P.$}
		\end{equation}
		
		Thus $\eta_S$ is one of the solutions of \eqref{eq:7.3}. Hence $ S\eta_{S}$ is also a solution. Now we have
        
\[
\eta_s^{-1}S\eta_s\,
\eta_s^{-1}\gamma\eta_s\,
\eta_s^{-1}S\eta_s
=
\eta_s^{-1}\gamma^{-1}\eta_s.
\]
\[
\Longrightarrow
u\,\eta_s^{-1}\gamma\eta_s\,u
=
\eta_s^{-1}\gamma^{-1}\eta_s.
\]
Note that the reverses of the special forms are one of the \(u_i\)'s.
Therefore, the above relation shows that $\eta_s^{-1}\gamma\eta_s$ is a special form.
		
		That implies, two loxodromic elements are special type elements, $\eta_{S}^{-1}\gamma\eta_{S}$ and the inverse of it.

        Every reverser other than $S$, lies in $\{\gamma^nS~|~n\in \z\setminus\{0\} \}$. Thus we now choose $S'=\gamma^{2k}S$. This implies
	
	$$
	\gamma^{-k} S' \gamma^{k} = \gamma^k S\gamma^k=S.
	$$
	That means we have	$$
	\eta_{S'}^{-1} S' \eta_{S'} = u,
	$$ for some $\eta_{S'}\in $ \Pic.
	
	Therefore,
	$$
	\eta_S \eta_{S'}^{-1} \gamma^{k} S \gamma^{-k} \eta_{S'} \eta_S^{-1} = S .
	$$

This implies,	$$
	\eta_S \eta_{S'}^{-1} \gamma^{k} \in C(S).
	$$
    We know that $C(S)$ is a \text{ Klein-$4$ group}, i.e., there exists another involution $t$ such that  $C(S)=\{Id,t,S,tS\}$ and $tS$ is also involution.
	Let
	$$
	\eta_S \eta_{S'}^{-1} \gamma^{k} = \alpha\in C(S).
	$$

	Equivalently,
	$$
	(\eta_{S'} \eta_{S}^{-1})^{-1} \gamma^{k} = \alpha .
	$$
	$$\imp \gamma^{-k}\eta_{S'} \eta_{S}^{-1}=\alpha$$
	$$\imp \eta_S'=\gamma^{k}\alpha \eta_{S}$$
	
	Therefore,
	\begin{align*}
		\eta_{S'}^{-1} \gamma \eta_{S'}
		&= \eta_S^{-1} (\alpha^{-1} \gamma^{k}) \gamma \gamma^{-k} \alpha \eta_S \\
		&= \eta_S^{-1} \alpha^{-1} \gamma \alpha \eta_S.
	\end{align*}
	This will give us two more special forms i.e., $\eta_S^{-1} t^{-1} \gamma t \eta_S$ and $\eta_S^{-1} t^{-1} \gamma^{-1} t \eta_S$ other than $\eta_S^{-1} \gamma \eta_S$ and $\eta_S^{-1} \gamma^{-1} \eta_S$.

	Therefore, there are exactly four such elements.
	
		\medskip
	
	\noindent

		Now it remains to show the odd case. Now we claim that if
		$S'= \gamma^{2k+1} S$
		and $S'$ and $S$ are conjugate to the same involution $u$.
		
		Then 
	$
		\eta_S^{-1} \gamma \eta_S, \text{ and }
		\eta_{S'}^{-1} \gamma \eta_{S'}$
		are distinct for any $ \eta_{S} $, $ \eta_{S'} $.
		
		To the contrary, suppose they are not. Then
		$$
		\eta_S^{-1} \gamma \eta_S
		=
		\eta_{S'}^{-1} \gamma \eta_{S'}
		\quad \Rightarrow \quad
		\eta_S \eta_{S'}^{-1} \gamma \eta_{S'} \eta_S^{-1}
		= \gamma .
		$$
		
		Thus,
		$$
		\eta_{S'} = b \eta_S,
		\qquad
		b \in C(\gamma).
		$$
		
		Moreover,
		$$
		\eta_S^{-1} S \eta_S
		=
		\eta_{S'}^{-1} S' \eta_{S'}
		\quad \Rightarrow \quad
		b S b^{-1} = S'.
		$$
		Also,
		$$
		(\gamma^{n} S)^2 = {Id}
		\quad
		$$
		for any $n\in \z.$ That means $bS$ is an involution. 
		Moreover,
		$$
		bS= S b^{-1} = S' b
		\quad \Rightarrow \quad
		S b^{-2} = S'
		\quad \Rightarrow \quad
		{\, S' = b^{2} S \,},
		\qquad b \in C(\gamma).
		$$
		This contradicts our assumption.
        Now fix a reverser $ S $. We can divide the class of reversers in terms of $ [S] $ and $ [\gamma S] $ as reverser of each class will generate the same set of special elements where $[S]$ means the class of reversers differ by even powered of an element in $C(\gamma).$
		
		Therefore, there are exactly $8$ special elements conjugate to $\gamma.$		
	\end{proof}

\begin{rk}
    Note that for any nonzero integer $k$, the number reversible elements in the conjugacy class of a loxodromic element $\gamma$ in \Pic is the same as the number reversible elements in the conjugacy class of  $\gamma^k$.
\end{rk}

\section{Counting of Reversible conjugacy classes }\label{sec6}
{In this section, we have deduced the asymptotic growth of the number of reversible classes in Picard group $\mathcal{P}.$ The forms mentioned in \lemref{3.4} are some kind of special forms, which helps us to deal with the counting problem. We named it as `special forms'.} 

For two real valued functions $f$ and $g$ defined in $\mathbb{N}$, we write $f(T) \sim g(T)$  if the ratio of $f(T)$ and $g(T)$ approches to $1$ as $T$ tends to infinity.

For $T>0,$ we define
$$
\mathcal B_T
=\left\{
\begin{pmatrix}
d+ck & b\\
c & d
\end{pmatrix}\in \textrm{$SL(2,\z[\textit{i}])$}
:\;
4<|2d+ck|^{2}\le T,\ ~k=0,1,i,i+1
\right\}.
$$
Thus $\mathcal B_T$ is the set of all reversible special forms whose trace $\mu=2d+ck$
satisfies $4<N\mu\le T$. The lower cut-off is harmless: an element of
$\mathrm{PSL}(2,\C)$ is loxodromic precisely when its trace does not lie in the real
segment $[-2,2]$, so the traces excluded by $N\mu\le4$ which still give loxodromic
elements are the eight values $\pm i,\ \pm1\pm i,\ \pm2i$, and by \lemref{error} (applied
with $T=4$) each of them occurs for only finitely many matrices, hence for only finitely
many conjugacy classes.
The next lemma shows that and $c\in\mathbb{Z}[i]$ appearing as (1,2)-entry
of a matrix in $\mathcal{B}_T$ must have absolute value bounded by $T+4$.

\begin{lemma}\label{error}
Let $(c,d)\in\mathbb{Z}[i]^2$ satisfy
\begin{enumerate}
\item
$ |2d+ck|^2\le T$ and $(2d+ck)^2\neq4,$ 
\item
$d^2+ckd-bc=1 \ \text{ for some } b\in\mathbb{Z}[i]$,
\end{enumerate}
where $k=0,1,i,1+i$. 
Then $|c|\le T+4$. 
\end{lemma}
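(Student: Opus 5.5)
The plan is to rewrite the determinant condition so that $c$ divides a quantity controlled by the trace, and then bound $|c|$ by the size of that quantity. Set $\mu=2d+ck$, the trace of the special matrix. Then $2d=\mu-ck$. Multiplying condition (2) by $4$ gives
\[
4d^2+4ckd-4bc=4,
\]
which we rewrite by completing the square:
\[
(2d+ck)^2-c^2k^2-4bc=4 .
\]
Equivalently,
\[
\mu^2-4=c\,(ck^2+4b).
\]
This identity is the key step: it says $c$ divides $\mu^2-4$ in $\mathbb Z[i]$.

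Next we use the hypothesis $(2d+ck)^2\neq 4$, that is, $\mu^2-4\neq0$. Then the cofactor $ck^2+4b$ is a nonzero Gaussian integer, so its norm is at least $1$. Taking absolute values in the identity gives
\[
|c|\le |c|\,|ck^2+4b|=|\mu^2-4|\le|\mu|^2+4\le T+4,
\]
where the last step uses $|\mu|^2\le T$ from condition (1). This proves the claim.

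There is no real obstacle here. The only point needing care is the nonvanishing of the cofactor, and that is exactly what the hypothesis $\mu^2\neq4$ provides. Without it, the parabolic traces $\pm2$ would allow arbitrarily large $c$. No structure of $k$ beyond $k\in\mathbb Z[i]$ is used, so the argument is uniform over the four values $k=0,1,i,1+i$.
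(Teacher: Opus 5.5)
Your proposal is correct and follows the paper's proof: both derive the identity $\mu^2-4=c\,(ck^2+4b)$ from the determinant condition and bound $|c|$ by $|\mu^2-4|\le T+4$. You also state explicitly what the paper leaves implicit, namely that $\mu^2\neq4$ makes the cofactor a nonzero Gaussian integer, so $|ck^2+4b|\ge1$.
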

\begin{proof}

Let $\mu=2d+ck$. Then $\mu^2=4d^2+4dck+c^2k^2$. Using condition (2) above, we get
$$\mu^2-4=c(ck^2+4b).$$
Therefore $c$ divides $\mu^2-4$ in $\mathbb{Z}[i]$; consequently $|c|\le |\mu^2-4|$.
Hence we get $|c|\le T+4$ which completes the proof as $T$ is large.\\
\end{proof}

To deduce the counting reversible classes, we can simply omit the elliptic part since there are finitely many reversible classes in the whole group. 

Note next that all the loxodromic classes with $N\mu\le4$ may also be discarded. The
traces $\mu$ with $N\mu\le4$ that do not lie in $[-2,2]$, and hence do give loxodromic
elements, are $\pm i$, $\pm1\pm i$ and $\pm2i$; for each of them \lemref{error} with
$T=4$ bounds $c$, and then $d=(\mu-ck)/2$ as well, so only finitely many matrices and
therefore only finitely many conjugacy classes arise. We may therefore restrict attention
to loxodromic elements whose trace satisfies $N\mu>4$.

From \thmref{thm6}, we know that an loxodromic element $g\in\mathcal{P}=\mathrm {PSL}(2,\mathbb{Z}[i])$ is \emph{reversible} if and only if $g$ is conjugate to a matrix of the form

\[
M(c,d,k)=
\begin{bmatrix}
d+ck & b \\
c    & d
\end{bmatrix}
\text{  in  } \mathrm{SL}(2,\mathbb{Z}[i])
\]
where, $k\in \{0,1,i,i+1\}.$
Let $\mu=2d+ck$ be the trace. We also know that each conjugacy class of reversible elements contain exactly $8$ elements of this form, it is enough to count such matrices in that specific forms.  We are counting $M(c,d,k)$
such that $|\mu|^2\le T$, $\mu^2\neq 4 $ and $\det(M(c,d,k))=1$.
Observe that unit determinant implies $c\mid\mu^2-4$. Conversely, suppose
$\mu^2-4=cb'$ for some Gaussian integer $b'$,
which gives $4d^2+4dck+c^2k^2-cb'=4$.

Then $\det(M(c,d,k)) =1$ is equivalent to $c^2k^2-cb'=-4bc$. Also, $c\neq0$ (otherwise, $M$ would be a parabolic element), then there is such an element
$b\in\mathbb{Z}[i]$ if and only if
\[
4\mid ck^2-b'.
\]
We see from above that $4\mid c^2k^2-cb'$. 
Therefore it is natural to divide into two separate cases depending on the coprimality of $(1+i)$ and $c$. Note that $1+i$ is the ramified prime above $2$. 

\subsection{$c$ coprime to $1+i$ }

In this case, given $\mu$ and an odd $c$, condition \eqref{eq:b} determines $k$ uniquely
and \eqref{eq:c} is then automatic by \remref{rk:odd}; so the data $(c,d,k)$ with $c$ odd
is determined by the single condition
\[
c\mid\mu^2-4 .
\]
Therefore, we get
\begin{align}
\#\{(c,d,k): \det(M(c,d,k))=1,\ (c,2)=1, \ 4<|\tr(M(c,d,k))|^2\le T\}
&=\sum_{4<|\mu|^2\le T}\ \sum_{\substack{(c,2)=1 \\ c\mid\mu^2-4}} 1 .
\label{eq:count}
\end{align}
The inner sum is a restricted divisor function defined as follows.

\begin{defn}
For $\nu\in\Z[i]$, $\nu\neq0$, let $\tau^*(\nu)$ be the number of ideals of $\Z[i]$
dividing $(\nu)$ that are coprime to $2$; equivalently, the number of divisors of the
largest odd divisor of $\nu$, counted up to units.
\end{defn}

For our counting problem, since $\gcd(\mu-2,\mu+2)\mid4$ and $\tau^*$ counts only
divisors coprime to $2$,
\begin{align}
\sum_{4<|\mu|^2\le T}\ \sum_{\substack{(c,2)=1 \\ c\mid\mu^2-4}} 1
&=\sum_{4<N(\mu)\le T}\tau^*(\mu^2-4)
=\sum_{4<N(\mu)\le T}\tau^*(\mu-2)\,\tau^*(\mu+2).
\label{eq:reduction}
\end{align}
Let 
\[
\B^{(1)}_T:=\{(c,d,k): \det(M(c,d,k))=1,\ (c,1+i)=1, \ 4<|\tr(M(c,d,k))|^2\le T\},
\]
From the restricted divisor sum estimate from Proposition we get.

\begin{prop}\label{propBT1}
As $T\to\infty$,
\begin{align}
|\B^{(1)}_T|=4\,S(T)=\frac{\pi^{3}}{12\,\zeta_{K}(2)}\,T(\log T)^{2}+O\!\left(T\log T\right)
=\frac{\pi}{2G}\,T(\log T)^{2}+O\!\left(T\log T\right) \notag
\end{align}
\end{prop}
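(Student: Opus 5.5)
Two things have to be established: the combinatorial identity $|\B^{(1)}_T|=4S(T)$, where $S(T)=\sum_{4<N\mu\le T}\tau^*(\mu-2)\tau^*(\mu+2)$ is the sum in \eqref{eq:reduction}, and the asymptotics of $S(T)$. The identity is bookkeeping. For $c$ coprime to $1+i$ and a given trace $\mu$, the discussion above shows that $k$ is uniquely determined by $\mu$ and $c$. Then $d=(\mu-ck)/2$ and $b$ are forced once $c\mid\mu^2-4$. The only remaining freedom is the choice of $c$ inside its ideal class: each odd ideal divisor $(c)$ of $\mu^2-4$ has exactly four generators, $c$, $ic$, $-c$, $-ic$. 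Each generator gives a distinct matrix in $\mathrm{SL}(2,\mathbb{Z}[i])$. This produces the factor $4$ in front of the ideal count $\tau^*(\mu^2-4)$.

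The factorisation $\tau^*(\mu^2-4)=\tau^*(\mu-2)\tau^*(\mu+2)$ holds because the odd parts of $\mu-2$ and $\mu+2$ are coprime, as $\gcd(\mu-2,\mu+2)\mid 4$, and $\tau^*$ is multiplicative on coprime odd parts.

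For the asymptotics I will apply the Gaussian analogue of Ingham's additive divisor theorem, Theorem~\ref{thm:ingham}, with shift $\kappa=4$. Writing $\nu=\mu-2$, the sum becomes $\sum_{N\nu\lesssim T}\tau^*(\nu)\tau^*(\nu+4)$. Replacing the region $4<N(\nu+2)\le T$ by $N\nu\le T$ changes only the boundary annulus, which costs $O(T^{1/2+\varepsilon})$ since $\tau^*$ is small on average there. Then I need the version of the theorem with $\tau^*$ in place of $\tau$. If Theorem~\ref{thm:ingham} is stated only for the full divisor function, I will write $\tau(\nu)=\sum_{j\ge0}\mathbf 1_{(1+i)^j\parallel\nu}\,(j+1)\tau^*(\nu)$ and invert. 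More simply, I will rerun the proof restricting the divisors to odd moduli $q$. The effect is to drop the Euler factor at the prime $1+i$ in the singular series.

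The main term should come out as
\[
S(T)=C\cdot\frac{T}{1}\cdot\Bigl(\frac{\pi}{4}\Bigr)^2\frac{(\log T)^2}{\zeta_K(2)}\cdot\sigma_{-1}^{*}(\kappa)\cdot(\text{local factor at }2),
\]
up to the normalisation $\#\{\nu:N\nu\le T\}\sim\pi T$ and the residue $\pi/4$ of $\zeta_K$. Here $\zeta_K(2)=\zeta(2)L(2,\chi_{-4})=\pi^2G/6$, so the target constant $\pi^3/(48\zeta_K(2))$ for $S(T)$ equals $\pi/(8G)$. This is consistent with $4S(T)\sim\frac{\pi}{2G}T(\log T)^2$.

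I expect the main obstacle to be the arithmetic constant: checking that the $2$-adic factor from restricting to odd divisors, combined with the local factor of the shift $\kappa=4$ (which is supported only at $1+i$), collapses to exactly $\pi^3/(48\zeta_K(2))$. My first attempt would be to compute $\sum_{q\text{ odd}}\rho(q)/N(q)^2$-type singular series directly. Because the shift is a power of $1+i$, every odd prime contributes the generic factor, and the whole computation reduces to one Euler factor at $1+i$. I would evaluate that factor explicitly. The secondary point is to confirm that the error term of Theorem~\ref{thm:ingham} is at most $O(T\log T)$, so that it absorbs the lower-order terms of the main polynomial in $\log T$.
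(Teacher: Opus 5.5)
Your bookkeeping for $|\B^{(1)}_T|=4S(T)$ agrees with the paper's. For odd $c$, \eqref{eq:b} fixes $k$ because $c$ is a unit mod $2$. Condition \eqref{eq:c} is then automatic by Remark~\ref{rk:odd}; that is what guarantees $b$ exists, not merely that it is unique. Each odd ideal divisor has four generators, and $\tau^*(\mu^2-4)=\tau^*(\mu-2)\tau^*(\mu+2)$. Your analytic route also matches: rerunning the proof of Theorem~\ref{thm:ingham} with $\kappa=4$ and odd divisors is what Proposition~\ref{prop:main} does.

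The gap is that the proposal stops at the only non-formal content of the statement: the constant is never derived. Your main term carries an unspecified ``local factor at $2$''. The one check you perform, $\pi^{3}/(48\zeta_K(2))=\pi/(8G)$, only rewrites the target. Your alternative of ``inverting'' $\tau=(a+1)\tau^*$ is not available from Theorem~\ref{thm:ingham} alone. The weight $(v(\nu)+1)(v(\nu+4)+1)$ depends on the class of $\nu$ modulo powers of $1+i$, so you would need Ingham's theorem in such progressions.

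Moreover, the mechanism you do describe, that restricting to odd divisors ``drops the Euler factor at $1+i$ in the singular series'', is incomplete, and taken literally it gives the wrong answer. Relative to $\frac{\pi^{3}}{16\zeta_K(2)}\sigma_{-1}(\kappa)$ there are three changes.

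(i) Odd $\dd_1\mid(\mu-2)$ and $\dd_2\mid(\mu+2)$ are compatible only if $(\dd_1,\dd_2)\mid(4)$, i.e.\ only if they are coprime. So $\sigma_{-1}(4)=31/16$ is replaced by $1$, and the shift contributes no $2$-adic factor at all.

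(ii) Each divisor variable now runs over odd elements only. Then $\sum_{N\lambda\le y,\ \lambda\ \mathrm{odd}}1/N\lambda=\frac{\pi}{2}\log y+O(1)$ instead of $\pi\log y$; equivalently, $\operatorname{Res}_{s=1}(1-2^{-s})\zeta_K(s)=\pi/8$ rather than $\pi/4$. Since this happens in both variables, it costs a factor $\frac14$, whereas your formula keeps the unrestricted $(\pi/4)^{2}$.

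(iii) M\"obius inversion over odd ideals gives $\prod_{\p\ \mathrm{odd}}(1-N\p^{-2})=\frac{4}{3\zeta_K(2)}$. This is the Euler factor you had in mind.

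Only the combination yields $S(T)\sim\frac{\pi^{3}}{16\zeta_K(2)}\cdot\frac14\cdot\frac43\,T(\log T)^{2}=\frac{\pi^{3}}{48\zeta_K(2)}T(\log T)^{2}$, and hence $4S(T)\sim\frac{\pi^{3}}{12\zeta_K(2)}T(\log T)^{2}$. With (iii) alone you would get $S(T)\sim\frac{\pi^{3}}{12\zeta_K(2)}T(\log T)^{2}$, which is four times too large.
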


\subsection{$1+i$ divides $c$ }
In the previous section the equivalence $\det M(c,d,k)=1\iff c\mid\mu^{2}-4$ was
available only for $c$ coprime to $2$, and the count was correspondingly restricted.
We now remove that restriction. 

Throughout, $\pi=1+i$, $v=v_{\p_2}$ is the valuation at $\p_2=(\pi)$, normalised by
$v(\pi)=1$, so that $v(2)=2$ and $v(4)=4$; concretely $v(z)=v_{2}(N z)$. We write
\[
\mu=2d+ck,\qquad \Delta=\mu^{2}-4,\qquad e=e(\mu):=v(\Delta),\qquad
\lambda=\lambda(\mu):=v(\mu),
\]
and $\Delta=\pi^{e}\Delta_{1}$ with $\Delta_{1}$ odd. Recall $2=-i\pi^{2}$ and
$4=-\pi^{4}$.

The first Lemma gives admissibility criteria in this case.

\begin{lemma}\label{lem:admiss}
Let $\mu\in\Z[i]$ with $\Delta\neq0$. A pair $(c,k)$ with $c\neq0$ and
$k\in\{0,1,i,1+i\}$ occurs as the data of a matrix $M(c,d,k)\in SL(2,\Z[i])$ of trace
$\mu$ if and only if
\begin{align}
&\text{\emph{(a)}}\quad c\mid\Delta,\label{eq:a}\\
&\text{\emph{(b)}}\quad ck\equiv\mu \pmod 2,\label{eq:b}\\
&\text{\emph{(c)}}\quad \Delta/c\equiv ck^{2}\pmod 4,\label{eq:c}
\end{align}
and then $d=(\mu-ck)/2$ and $b=\bigl(\Delta/c-ck^{2}\bigr)/4$ are uniquely determined.
\end{lemma}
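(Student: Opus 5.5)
The plan is a direct translation of the matrix equations into conditions on $(c,k)$. First, necessity. Suppose $M(c,d,k)=\begin{bmatrix} d+ck & b\\ c & d\end{bmatrix}\in SL(2,\Z[i])$ has trace $\mu=2d+ck$. Then $d=(\mu-ck)/2$, and this already forces $\mu-ck\in 2\Z[i]$, which is (b).

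Next, I would repeat the computation from the proof of \lemref{error}. Using $d^2+ckd-bc=1$, one gets
\[
\Delta=\mu^2-4=c\,(ck^2+4b).
\]
This gives (a). Since $c\neq0$, we may divide to obtain $\Delta/c=ck^2+4b$, so $\Delta/c\equiv ck^2 \pmod 4$, which is (c). The same identity shows that $b=(\Delta/c-ck^2)/4$ is forced. Together with $d=(\mu-ck)/2$, this gives the uniqueness of $d$ and $b$.

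For sufficiency, assume (a), (b), (c) hold. By (b), $d:=(\mu-ck)/2\in\Z[i]$. By (a) and (c), $b:=(\Delta/c-ck^2)/4\in\Z[i]$. It remains to check that the resulting matrix has determinant $1$ and trace $\mu$.

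The trace is $(d+ck)+d=2d+ck=\mu$ by construction. For the determinant, compute
\[
d(d+ck)-bc=\frac{(\mu-ck)(\mu+ck)}{4}-\frac{\Delta-c^2k^2}{4}=\frac{\mu^2-c^2k^2-\mu^2+4+c^2k^2}{4}=1.
\]
So $M(c,d,k)\in SL(2,\Z[i])$, as required.

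The only point needing care is that divisibility by $2$ and $4$ must be taken in $\Z[i]$, i.e.\ as divisibility by $-i\pi^2$ and $-\pi^4$. Condition (c) is meaningful only once (a) guarantees $\Delta/c\in\Z[i]$. No genuine obstacle is expected; the argument is an elementary bookkeeping of the identity $\Delta=c(ck^2+4b)$.
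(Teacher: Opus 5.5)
Your proposal is correct and follows the same route as the paper: (b) comes from the solvability of $2d=\mu-ck$ in $\Z[i]$, and (a), (c) and the uniqueness of $b$ all come from the identity $\Delta=c\,(ck^{2}+4b)$. Your explicit determinant computation in the sufficiency direction spells out what the paper compresses into the phrase ``$b$ exists precisely when $\Delta/c\equiv ck^{2}\pmod 4$''.
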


\begin{proof}
Given $(c,k)$, the matrix entry $d$ must satisfy $2d=\mu-ck$, which is solvable in
$\Z[i]$ exactly under \eqref{eq:b}. Squaring $\mu=2d+ck$ and using
$\det M=d^{2}+ckd-bc$ gives the identity $\Delta=c\,(ck^{2}+4b)$, so $c\mid\Delta$ and
$b$ exists precisely when $\Delta/c\equiv ck^{2}\pmod 4$. 
\end{proof}

\begin{rk}\label{rk:odd}
Note that \eqref{eq:a} and \eqref{eq:b} imply \eqref{eq:c}
when $v(c)=0$. This matches with the unramified case. Therefore our counting result is obtained by considering this general case. We chose to keep the Case 1 since it has all the analytic ingredients of the proof. 
We shall prove the first case and in the second case, indicate the changes due to presence of the weights in the divisor sums.  
\end{rk}

The next task is to determine possible valuations of $\mu$ and $\Delta$. 

\begin{lemma}\label{lem:e}
With $\lambda=v(\mu)$ and $e=v(\Delta)$,
\begin{align}
e=0\iff\lambda=0,\qquad e=2\iff\lambda=1,\qquad e=4\iff\lambda\ge3,\qquad
e\ge6\iff\lambda=2 .
\end{align}
In the last case, writing $\mu=2\nu$ with $\nu$ a unit and $\nu=1+\pi t$
\textup{(}every unit of $\Z_2[i]$ is $\equiv1\bmod\pi$\textup{)},
\begin{align}
e=6+v(t)+v(t-i\pi).
\label{eq:elam2}
\end{align}
In particular $e$ never takes the values $1,3,5,7,8$: the set of possible valuations is
$\{0,2,4,6\}\cup\{9,10,11,\dots\}$.
\end{lemma}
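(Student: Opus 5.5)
The plan is to factor $\Delta=\mu^{2}-4=(\mu-2)(\mu+2)$ and use $v(\Delta)=v(\mu-2)+v(\mu+2)$. The two factors differ by $4$, and $v(4)=4$, $v(2)=2$. The ultrametric inequality (equality when valuations differ) then handles all cases except $\lambda=2$ directly.

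\emph{Case $\lambda=0$.} Here $\mu$ is odd, so $\mu\pm2$ are odd and $e=0$.

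\emph{Case $\lambda=1$.} Since $v(2)=2>1$, we get $v(\mu\pm2)=1$, hence $e=2$.

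\emph{Case $\lambda\ge3$.} Now $v(\mu\pm2)=v(2)=2$, hence $e=4$.

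\emph{Case $\lambda=2$.} Write $\mu=2\nu$ with $v(\nu)=0$. Then $\Delta=4(\nu-1)(\nu+1)$, so $e=4+v(\nu-1)+v(\nu+1)$. Every element of valuation $0$ is $\equiv1\pmod\pi$, since $\Z[i]/(\pi)\cong\mathbb F_2$. So we may write $\nu=1+\pi t$ with $t\in\Z[i]$, or in $\Z_2[i]$ if one prefers to say ``unit''. Then $\nu-1=\pi t$. Using $2=-i\pi^{2}$,
\[
\nu+1=\pi t+2=\pi\,(t-i\pi).
\]
This gives $e=6+v(t)+v(t-i\pi)$, which is \eqref{eq:elam2}. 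In particular $e\ge6$ when $\lambda=2$. Because the four ranges of $\lambda$ are exhaustive and give the disjoint values $0$, $2$, $4$ and $\ge6$, all the stated ``iff''s follow. The hypothesis $\Delta\ne0$ ensures all valuations are finite.

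The remaining step, which is the only one needing a little care, is the exact value set in the case $\lambda=2$. We split on $v(t)$.
\begin{itemize}
\item If $v(t)=0$, then $t-i\pi$ is odd, so $e=6$.
\item If $v(t)\ge2$, then $v(t-i\pi)=v(i\pi)=1$, so $e=7+v(t)\ge9$.
\item If $v(t)=1$, write $t=\pi u$ with $u$ odd. Then $t-i\pi=\pi(u-i)$, and $u-i\equiv1-1\equiv0\pmod\pi$, so $v(t-i\pi)\ge2$ and $e\ge9$.
\end{itemize}
Hence $e\in\{6\}\cup\{9,10,\dots\}$, and the values $7$ and $8$ never occur; the values $1,3,5$ are already excluded by the other cases.

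To see that every value $\ge9$ actually occurs, take $t=\pi^{m}$ with $m\ge2$. Then $\nu=1+\pi^{m+1}$ and $\mu=2\nu$, which gives $e=7+m$ and so realizes every integer $\ge9$. The values $0,2,4,6$ are realized, for example, by $\mu=1$, $\mu=\pi$, $\mu=8$, and $\mu=2(1+\pi)$ respectively.
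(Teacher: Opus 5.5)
Your proof is correct and follows the paper's argument essentially step for step. You use $e=v(\mu-2)+v(\mu+2)$ with the ultrametric inequality for $\lambda\neq2$, then write $\nu=1+\pi t$ and use $2=-i\pi^{2}$ to get $\nu+1=\pi(t-i\pi)$, and finish with a split on $v(t)$. The differences are cosmetic. In the subcase $v(t)=1$ you only show $v(t-i\pi)\ge2$, whereas the paper records the exact value $2+v(r)$; that exact value is used later in Lemma~\ref{lem:levels} but is not needed for this statement. Your explicit examples realizing every value in $\{0,2,4,6\}\cup\{9,10,\dots\}$ make the final claim about the set of possible valuations slightly more complete than the paper's proof.
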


\begin{proof}
$e=v(\mu-2)+v(\mu+2)$. If $\lambda=0$ both factors are odd and $e=0$. If $\lambda=1$
then $v(\mu\pm2)=\min(1,2)=1$ and $e=2$. If $\lambda\ge2$ write $\mu=2\nu$, so
$\mu\pm2=2(\nu\pm1)$ and $e=4+v(\nu-1)+v(\nu+1)$; for $\nu$ even (i.e. $\lambda\ge3$)
both $\nu\pm1$ are odd and $e=4$. For $\nu$ odd (i.e. $\lambda=2$) put $\nu=1+\pi t$;
then $\nu-1=\pi t$, while $\nu+1=2+\pi t=\pi(t-i\pi)$ because $2=-i\pi^{2}$. This gives
\eqref{eq:elam2}. Finally, since $v(i\pi)=1$:
if $v(t)=0$ then $v(t-i\pi)=0$ and $e=6$; if $v(t)\ge2$ then $v(t-i\pi)=1$ and
$e=7+v(t)\ge9$; and if $v(t)=1$, writing $t=\pi s$ and $s/i=1+\pi r$ we get
$t-i\pi=\pi(s-i)=i\pi^{2}r$, so $v(t-i\pi)=2+v(r)$ and $e=9+v(r)\ge9$.
\end{proof}

Now, we observe that counting $(c,k)$ is the same as counting the triples $(j,\gamma,k)$
where $c=\pi^j\gamma$ with $\gamma$ coprime to $\pi$. 
For fixed $(j,k)$, $\gamma$ runs over divisors of $\Delta_1$.
The number of such $\gamma$ is 
$$\tau(\Delta_1)=\tau^*(\Delta)=\tau^*(\mu-2)\tau^*(\mu+2).$$
Given $\mu$, let $W(e)$ be the number of admissible pairs
$(j,k)$. Then for every $T$,
\begin{align}
\bigl|\B_{T}\bigr|
=4\sum_{\substack{\mu\in\Z[i]\\ 4<N\mu\le T}}
W\bigl(e(\mu)\bigr)\,\tau^{*}(\mu-2)\,\tau^{*}(\mu+2).
\label{eq:identity}
\end{align}

Combining the count \eqref{eq:count} with the divisor-sum asymptotics developed in the
sections that follow, we obtain the main result of this paper: an asymptotic formula,
with explicit leading constant, for the number of special reversible matrices ordered by
the norm of their trace. Writing
\[
\B_T:=\{(c,d,k): \det(M(c,d,k))=1,\ 4<|\tr(M(c,d,k))|^2\le T\},
\]
we prove the following.

\begin{theorem}\label{thm:BT}
As $T\to\infty$,
\begin{align}
\bigl|\B_{T}\bigr|
=\frac{\pi^{3}}{4\,\zeta_{K}(2)}\;T(\log T)^{2}+O\!\left(T\log T(\log\log T)^{2}\right)
=\frac{3\pi}{2G}\;T(\log T)^{2}+O\!\left(T\log T(\log\log T)^{2}\right),
\end{align}
where $G=L(2,\chi_{-4})=0.9159655\ldots$ is Catalan's constant and
$\dfrac{3\pi}{2G}=5.144722\ldots$
\end{theorem}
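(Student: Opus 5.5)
The plan is to start from the identity \eqref{eq:identity}, which writes $|\B_T|$ as $4\sum_{4<N\mu\le T}W(e(\mu))\,\tau^*(\mu-2)\tau^*(\mu+2)$. From there I would reduce everything to the odd-part divisor sum $S(T)$ already handled in Proposition~\ref{propBT1}, now restricted to $2$-adic congruence classes of $\mu$. The leading constant in Theorem~\ref{thm:BT} is exactly three times the one in Proposition~\ref{propBT1}. So the target is to show that the weight $W(e(\mu))$ has mean value $3$ with respect to the measure $\tau^*(\mu-2)\tau^*(\mu+2)$, whereas the odd-$c$ case corresponds to the weight $W_0\equiv 1$.

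\emph{Step 1: compute $W(e)$.} Using Lemma~\ref{lem:admiss}, for $c=\pi^j\gamma$ with $\gamma$ odd and $0\le j\le e$, I would check conditions \eqref{eq:b} and \eqref{eq:c} $\pi$-adically for each $k\in\{0,1,i,1+i\}$. By Lemma~\ref{lem:e} the relevant data are $\lambda=v(\mu)$ and $e$. I expect:
\begin{itemize}
\item a unique admissible pair $(j,k)$ when $e=0$, namely $j=0$ (Remark~\ref{rk:odd});
\item a small fixed number of pairs for $e=2,4,6$;
\item a count growing at most linearly in $e$ for $e\ge 9$, since only $j$ near $0$ or near $e$ should survive the mod-$4$ condition \eqref{eq:c}.
\end{itemize}
In all cases I would record $W(e)=O(1+e)$. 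The fact, asserted in \eqref{eq:identity}, that admissibility depends only on $(j,k)$ and not on the odd part $\gamma$ is used here; I will take it as stated.

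\emph{Step 2: equidistribution in $2$-adic classes.} The function $\tau^*$ ignores the prime $\pi$. Hence the Ingham-type Theorem~\ref{thm:ingham} (with shift $\kappa=4$) should hold with $\mu$ restricted to any residue class $\mu\equiv\mu_0\pmod{\pi^m}$. The main term should be the unrestricted one multiplied by $2^{-m}$, with uniformity in $m$ of the form $O(2^{-m}T\log T)+O(T^{1-\delta}2^{O(m)})$. I would obtain this by rerunning the proof of Theorem~\ref{thm:ingham}, with the additional congruence imposed inside the lattice-point count over $\mu$. Grouping classes by $e$, the density of $\{\lambda=0\}$ is $1/2$, of $\{\lambda=1\}$ is $1/4$, of $\{\lambda\ge3\}$ is $1/8$, and $\{\lambda=2\}$ has density $1/8$, split further by \eqref{eq:elam2} according to $v(t)$ and $v(r)$. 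Consequently the class $e=E$ has density $\asymp 2^{-E}$.

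\emph{Step 3: assemble.} Combining Steps 1 and 2, the main term equals $\bigl(\sum_e W(e)\,\mathrm{dens}(e)\bigr)\cdot\frac{\pi}{2G}T(\log T)^2$. A finite computation, with a geometric tail, should give $\sum_e W(e)\,\mathrm{dens}(e)=3$, which yields the constant $3\pi/(2G)$. For the error term I would truncate at $e\le E:=C\log\log T$ and apply Step 2 with $m\approx E$. For the tail $e>E$ I would use the trivial bound $\tau^*(\mu^2-4)\ll$ an average of $(\log T)^2$ on the thin set, together with $W(e)=O(e)$; this gives $O(T(\log T)^2\,E\,2^{-E})$, which is negligible. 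The truncated range contributes an error $O(T\log T)$ per $e$, weighted by $W(e)=O(e)$, and summing over $e\le E$ produces $O(T\log T(\log\log T)^2)$.

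I expect the main obstacle to be Step 2: making the congruence-restricted Ingham asymptotic uniform in the modulus $\pi^m$, with secondary-term dependence on $m$ mild enough to allow $m$ of size $\log\log T$. If this uniformity is hard to obtain directly, my first fallback is to show that $W(e)$ is constant for $e\ge 9$ up to an explicit function of $v(t)$ and $v(r)$. The tail classes could then be handled as a single congruence family of fixed modulus, which removes the need for uniformity in $m$.
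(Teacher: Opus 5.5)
\textbf{Verdict: genuine gaps.} Your outline follows the same architecture as the paper's proof. You start from \eqref{eq:identity}, use that $W$ is constant on the strata $\lambda=0$, $\lambda=1$, $\lambda\ge3$, split $\lambda=2$ into the levels $L_e$, apply equidistribution in $\pi$-adic classes with a uniform $O(T\log T)$ error, and truncate at $E\asymp\log\log T$. The equidistribution step is \lemref{lem:equi}; it is what you actually use in Step~3, and the classes that occur satisfy its hypothesis \eqref{eq:equihyp}. However, the two steps that carry the content of the theorem are not done, and as written neither would go through.

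\emph{The constant.} You never compute $W(e)$ or the masses of the levels. The value $3$ is read off from the statement, yet it is the entire content of the leading term. Your heuristic for $W(e)$ is also backwards. Since $v(\Delta/c)=e-j$ and $v(ck^{2})=j+2v(k)$, condition \eqref{eq:c}, i.e.\ $v(\Delta/c-ck^{2})\ge4$, reads $\min(e-j,\,j+2v(k))\ge4$ outside the balanced cases $e-j=j+2v(k)$. So it is the $j$ close to $e$ that are killed, not spared. Together with \eqref{eq:b}, for $e\ge6$ the survivors fill whole intervals: $0\le j\le e-4$ for $k=0$, $4\le j\le e-4$ for $k\in\{1,i\}$, and $2\le j\le e-4$ for $k=1+i$. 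In the balanced cases you must examine $\sigma\mp\gamma$ modulo a power of $\pi$. This produces the two extra pairs at $e=6$ and most of the pairs at $e=2,4$. It also requires knowing that $\Delta_1$ is automatically a square modulo $\pi^{3}$ (resp.\ $\equiv1$ modulo $\pi^{2}$), so that the weight is constant on those strata and independent of $\gamma$. The outcome is $W(e)=4e-22$ for $e\ge9$ and $W(0),W(2),W(4),W(6)=1,2,4,6$. Combining these with the masses $\tfrac12,\tfrac14,\tfrac18$ of the strata $e=0,2,4$ and with $q_6=2^{-4}$, $q_e=2^{4-e}$ for $e\ge9$ (\lemref{lem:levels}) gives $\mathbb{E}[W]=\tfrac12+\tfrac12+\tfrac12+\tfrac32=3$. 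None of this is in your proposal.

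\emph{The tail.} The bound ``$\tau^*(\mu^2-4)\ll$ an average of $(\log T)^2$ on the thin set'' is not available. Pointwise you only have $T^{\varepsilon}$. The set $\{e(\mu)>E\}$ consists of classes modulo $\pi^{e-3}$ with $e$ as large as $\asymp\log T$, and averages over such classes are exactly what must be proved. Your Step~2 does not supply this: its secondary term $T^{1-\delta}2^{O(m)}$ is worthless for $m\asymp\log T$, and Ingham's method gives no power saving anyway. The uniform $O(T\log T)$ error of \lemref{lem:equi}, summed over $\asymp\log T$ levels with weights $\asymp e$, only gives $T(\log T)^3$. Your claimed $E\,2^{-E}T(\log T)^2$ is what one expects heuristically, but proving it needs an upper bound for these sums over classes modulo $\pi^m$, uniform for $m$ up to $\asymp\log T$, and nothing in the proposal provides one. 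The paper avoids this with the explicit parametrisation $t=\pi^{m}u$, resp.\ $t=i\pi(1+\pi^{n+1}w)$, followed by Cauchy--Schwarz and $\sum_{N\nu\le x}\tau(\nu)^2\ll x(\log x)^3$ (\lemref{lem:tausq}, \lemref{lem:tail}). That yields only $E\,2^{-E/2}\,T(\log T)^3$, and this weaker saving is why $E$ must be as large as $\lceil 6\log\log T\rceil$: then $2^{-E/2}\le(\log T)^{-3\log 2}$ with $3\log2>2$.
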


The proof occupies the rest of the paper.

\subsection{Ingham's additive divisor theorem over $\mathbb{Z}[i]$}
\label{sec:ingham}
We shall use the following analog of the classical
result of Ingham \cite{ingham1927} for the Gaussian integers.

\begin{theorem}\label{thm:ingham}
Fix $\kappa\in\mathbb{Z}[i]\setminus \{0\}$.
Then, as $x\to\infty$,
\begin{align}
\sum_{\substack{\nu\in\mathbb{Z}[i]\\ 0<N\nu\le x}}
\tau(\nu)\,\tau(\nu+\kappa)
&=\frac{\pi^{3}}{16\,\zeta_{K}(2)}\,\sigma_{-1}(\kappa)\,x(\log x)^{2}+O\!\left(x\log x\right),
\end{align}
where $\tau$ is the divisor function on $\mathbb{Z}[i]$,
$\zeta_{K}(s)$ is the Dedekind zeta function of $K=\mathbb{Q}(i)$,
and $\sigma_{-1}(\kappa)=\sum_{\delta\mid\kappa}N(\delta)^{-1}$.
\end{theorem}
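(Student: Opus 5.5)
Throughout, $\nu$ ranges over nonzero Gaussian integers, and $\tau(\nu)=\sum_{\mathfrak a\mid(\nu)}1$ counts ideals, i.e. divisors up to units. The approach is the classical Ingham/Estermann scheme: open one divisor function and count $\nu$ in residue classes in the plane. Each $\nu$ with $N\nu\le x$ corresponds to a lattice point in the disc of radius $\sqrt x$.

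\textbf{Step 1: open one divisor function and use the hyperbola trick.} Write $\tau(\nu)=\sum_{\mathfrak d\mid(\nu)}1$. Pair each ideal divisor $\mathfrak d$ with its complement $(\nu)\mathfrak d^{-1}$, so that one of the two has norm at most $\sqrt{N\nu}\le\sqrt x$. This gives
\[
\tau(\nu)=2\#\{\mathfrak d\mid(\nu):N\mathfrak d<\sqrt{N\nu}\}+\mathbf 1_{\square}.
\]
To avoid the $\nu$-dependent cutoff, I would first treat the sum with the divisor restricted to $N\mathfrak d\le y$ for fixed $y=\sqrt x$, and correct the boundary by partial summation or dyadic decomposition. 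The main sum then becomes
\[
\sum_{N\mathfrak d\le\sqrt x}\ \sum_{\substack{N\nu\le x\\ \nu\equiv0\ (\mathfrak d)}}\tau(\nu+\kappa).
\]

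\textbf{Step 2: divisor sums in progressions.} For a generator $\delta$ of $\mathfrak d$, I need
\[
A(x;\delta,\kappa)=\sum_{\substack{N m\le x\\ m\equiv\kappa\ (\delta)}}\tau(m).
\]
I would evaluate this by the Dirichlet hyperbola method over $\mathbb Z[i]$, counting lattice points in discs intersected with a congruence class modulo $\delta$. Split by $g=\gcd(\delta,\kappa)$, which is harmless since $\kappa$ is fixed. The expected main term is
\[
\frac{x}{N\delta}\,\frac{\pi}{4}\,\frac{\phi(\delta/g)}{N(\delta/g)}\cdots\bigl(\log x+C(\delta)\bigr),
\]
obtained by counting divisors $e$ with $e\mid m$, $m\equiv\kappa\pmod\delta$, which is solvable iff $\gcd(e,\delta)\mid\kappa$. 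The error should be $O\bigl((x/N\delta)^{1/2+\epsilon}+N\delta^{1+\epsilon}\bigr)$, or $O(\sqrt x\,\tau(\delta))$ uniformly for $N\delta\le\sqrt x$. This uses only the Gauss circle bound $O(\sqrt R)$ for points of a shifted lattice of covolume $N\delta$ in a disc of radius $\sqrt R$. Summing over $N\delta\le\sqrt x$ keeps the total error at $O(x\log x)$. This is the crude but sufficient level: Ingham's original result also has error $O(x\log x)$, and no Kloosterman input is needed.

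\textbf{Step 3: assemble the main term.} Substitute the main term of Step 2 and sum over $\mathfrak d$. This produces sums of the form
\[
\sum_{N\mathfrak d\le\sqrt x}\frac{c_\kappa(\mathfrak d)}{N\mathfrak d}\,\log\frac{x}{N\mathfrak d},
\]
where $c_\kappa(\mathfrak d)=\sum_{\mathfrak e}\mu(\mathfrak e)\cdots$ is a Ramanujan-sum–type local density. Its Dirichlet series equals $\sigma_{-1}$-twisted $\zeta_K(s+1)^{-1}$-type factors times $\zeta_K$. The residues then give the constant $\frac{\pi^3}{16\zeta_K(2)}\sigma_{-1}(\kappa)$, computed as follows:
\begin{itemize}
\item each $\sum_{N\mathfrak a\le y}N\mathfrak a^{-1}$ contributes the residue $\pi/4$ of $\zeta_K$ times $\log y$;
\item the integral $\int\!\!\int$ of $\log$'s over the hyperbola region yields the $(\log x)^2$ factor;
\item $\sum_{\mathfrak e}\mu(\mathfrak e)N\mathfrak e^{-2}=\zeta_K(2)^{-1}$;
\item the gcd with $\kappa$ contributes $\sigma_{-1}(\kappa)$;
\item the factor $4$ from units, converting ideals to elements $\nu$, must be tracked carefully.
\end{itemize}

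\textbf{Main obstacle.} I expect the difficulty to be Step 2's uniformity together with the bookkeeping of units, rather than any deep input. Specifically, I must keep the error in the progression count uniform in $\delta$ so that it sums to $O(x\log x)$, and handle the $\nu$-dependent hyperbola cutoff of Step 1 without losing a logarithm. If the cutoff causes trouble, I would instead use the symmetric identity $\tau(\nu)\tau(\nu+\kappa)$ with both divisors opened and a smooth dyadic partition, then remove the smoothing at cost $O(x\log x)$.
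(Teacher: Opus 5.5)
Your route is viable, but it is genuinely different from the paper's.

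\textbf{The paper's route.} The paper runs Ingham's original symmetric argument. It opens \emph{both} divisor functions at the level of elements, so that $16S(x)$ counts quadruples with $\lambda\rho-\mu\sigma=\kappa$ and $N(\mu\sigma)\le x$. The bound $N(\lambda\mu)\,N(\rho\sigma)\le X^2$ and the involution $(\lambda,\mu,\rho,\sigma)\mapsto(\rho,\sigma,\lambda,\mu)$ give $16S=2P-R$. For fixed $(\lambda,\mu)$ the solutions lie on the lattice line $\rho=\rho_0+\mu'h$, $\sigma=\sigma_0+\lambda'h$, so a single disc count gives $\pi x/(N\delta\,N\lambda'N\mu')$. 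The constants then appear directly:
\begin{itemize}
\item $\sigma_{-1}(\kappa)$ from the condition $(\delta)\mid(\kappa)$ on $\gcd(\lambda,\mu)$;
\item $1/\zeta_K(2)$ from the coprimality of $(\lambda',\mu')$;
\item $(\log x)^2$ from the single hyperbolic region $N\lambda'N\mu'\le X/N\delta^2$.
\end{itemize}
There are no divisor sums in progressions and no local densities, and the overlap $R$ is bounded crudely.

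\textbf{Your route.} You open only one divisor function. You therefore need $\tau$ in residue classes modulo $\mathfrak d$, uniformly for $N\mathfrak d\le\sqrt x$, followed by an Euler-product computation. Made precise, the density is $c_\kappa(\mathfrak d)=\sum_{\mathfrak h\mid(\mathfrak d,\kappa)}\phi(\mathfrak d\mathfrak h^{-1})/N(\mathfrak d\mathfrak h^{-1})$. Every $\mathfrak h$ dividing the gcd contributes, not only $g$, and this is what makes $\sum_{\mathfrak d}c_\kappa(\mathfrak d)N\mathfrak d^{-s}=\sigma_{-s}(\kappa)\zeta_K(s)/\zeta_K(s+1)$. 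The inner sum $\sum_{N\nu\le x,\ \mathfrak d\mid\nu}\tau(\nu+\kappa)$ has leading term $\frac{\pi x}{N\mathfrak d}\,\rho\,c_\kappa(\mathfrak d)\log x$ with $\rho=\pi/4$. Then $2\sum_{N\mathfrak d\le\sqrt x}$ of this gives $\frac{\pi\rho^{2}}{\zeta_K(2)}\sigma_{-1}(\kappa)\,x(\log x)^2$, which is the stated constant. The unit bookkeeping you worry about is automatic, since the element density $\pi$ equals $4\rho$.

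\textbf{What each buys.} Your route costs more bookkeeping. In exchange, it isolates the progression sum as exactly the place where Kloosterman-sum input would give a power saving. It also absorbs extra congruence conditions on $\nu$ and restricted divisor functions more transparently; compare the paper's later Lemma~\ref{lem:equi}, which opens both divisor functions over ideals much as your fallback does.

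\textbf{Details to fix (none fatal).}
\begin{itemize}
\item In $\mathbb Z[i]$ distinct ideals can share a norm. So the middle term of your hyperbola identity is $\#\{\mathfrak d\mid(\nu):N\mathfrak d=\sqrt{N\nu}\}$, not $\mathbf 1_\square$; take $\nu=5$, where $(2+i)$ and $(2-i)$ both have norm $5$. This term is supported on $\nu$ with $N\nu$ a perfect square and contributes only $O(x^{1/2+\epsilon})$.
\item The Gauss circle bound does not give $O(\sqrt x\,\tau(\delta))$ for the progression sum. Splitting the hyperbola at $\sqrt x$ it gives $O(x^{3/4}N\delta^{-1/2}+\sqrt x)$, which is already $x^{3/4}$ for $\delta=1$. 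Your other form, $O(N\delta^{1+\epsilon})$, would sum to $x^{1+\epsilon}$. The correct bound still sums to $O(x)$ over $N\delta\le\sqrt x$, so the conclusion stands.
\item The $\nu$-dependent cutoff needs no smoothing or partial summation. The condition $N\mathfrak d<\sqrt{N\nu}$ is just the lower limit $N\nu>N\mathfrak d^2$. The progression sum up to height $N\mathfrak d^2$ is $\ll N\mathfrak d\log x$, so it can be dropped at total cost $O(x\log x)$.
\item The constants $C(\mathfrak d)$ likewise contribute only $O(x\log x)$, since they are bounded on average against the weight $1/N\mathfrak d$.
\end{itemize}
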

In this section we prove Theorem~\ref{thm:ingham}. Throughout, $K=\mathbb{Q}(i)$ and,
for $\nu\neq0$, $\tau(\nu):=\#\{\mathfrak{d}\ \text{ideal of}\ \mathbb{Z}[i]:
\mathfrak{d}\mid(\nu)\}$ is the number of divisors of the ideal $(\nu)$.

\begin{lemma}\label{lem:lattice}
Let $\gamma\in\mathbb{Z}[i]$, $\gamma\neq0$, let $c\in\mathbb{C}$ and $\varrho>0$. Then
$$\#\{h\in\mathbb{Z}[i]:|c+\gamma h|\le\varrho\}=\frac{\pi\varrho^{2}}{N\gamma}+O\!\left(\frac{\varrho}{\sqrt{N\gamma}}+1\right),$$
with an absolute implied constant.
\end{lemma}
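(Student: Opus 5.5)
We reduce to a lattice-point count in a disc, translate the disc so the lattice becomes $\mathbb{Z}[i]$, and compare area with lattice points. The condition $|c+\gamma h|\le\varrho$ says that $\gamma h$ lies in the closed disc $D$ of radius $\varrho$ centred at $-c$. Since $\gamma\neq0$, the map $h\mapsto \gamma h$ is multiplication by a complex number, i.e.\ a rotation composed with a dilation by $|\gamma|=\sqrt{N\gamma}$. It is a bijection from $\mathbb{Z}[i]$ onto the square lattice $\gamma\mathbb{Z}[i]$, whose fundamental cell is a square of side $|\gamma|$ and area $N\gamma$. Dividing by $\gamma$, the count equals the number of $h\in\mathbb{Z}[i]$ in the disc $D'$ of radius $r:=\varrho/|\gamma|$ centred at $-c/\gamma$. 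So it suffices to prove that for any centre $w\in\mathbb{C}$ and radius $r>0$,
\[
\#\{h\in\mathbb{Z}[i]:|h-w|\le r\}=\pi r^{2}+O(r+1),
\]
with an absolute constant. Substituting $r=\varrho/\sqrt{N\gamma}$ then gives exactly the claimed main term $\pi\varrho^2/N\gamma$ and error $O(\varrho/\sqrt{N\gamma}+1)$.

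For the reduced estimate we use the standard Gauss-circle comparison, which needs no arithmetic and is uniform in the centre. Attach to each $h\in\mathbb{Z}[i]$ the unit square $Q_h=h+[-\tfrac12,\tfrac12]^2$; these squares tile the plane and each has diameter $\sqrt2$. If $|h-w|\le r$, then $Q_h$ lies in the disc of radius $r+\tfrac{\sqrt2}{2}$ about $w$. Conversely, every point of the disc of radius $\max(r-\tfrac{\sqrt2}{2},0)$ lies in some $Q_h$ with $|h-w|\le r$. Comparing areas gives
\[
\pi\bigl(\max(r-\tfrac{\sqrt2}{2},0)\bigr)^2\le \#\{h\}\le \pi\bigl(r+\tfrac{\sqrt2}{2}\bigr)^2 .
\]
Both bounds are $\pi r^2+O(r+1)$. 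The $+1$ term handles small $r$: there the count is $O(1)$ and $\pi r^2=O(1)$ as well. This is why the error term must include the constant, since a disc smaller than a lattice cell may still contain a point.

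There is no genuine obstacle. The only point needing care is the uniformity: the implied constant must not depend on $\gamma$, $c$ or $\varrho$. This holds because the squares argument never uses the centre $w=-c/\gamma$, and the rescaling by $\gamma$ is exact, since $\gamma\mathbb{Z}[i]$ is precisely a rotated and scaled copy of $\mathbb{Z}[i]$ (multiplication by $\gamma$ is a similarity because $\mathbb{Z}[i]$ is stable under multiplication by $i$). I would state this uniformity explicitly, since the lemma will presumably be applied later with $\gamma$ running over divisors and $c$ over residue classes in the proof of Theorem~\ref{thm:ingham}.
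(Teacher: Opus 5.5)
Your argument is correct and complete: dividing by $\gamma$ turns the condition into $|h+c/\gamma|\le \varrho/\sqrt{N\gamma}$. The unit-square comparison then gives $\pi r^{2}+O(r+1)$ uniformly in the centre, which is exactly the claimed estimate.

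The paper itself states this lemma without proof, treating it like the routine counts in Lemma~\ref{lem:counts}. What you wrote is the standard justification it implicitly relies on.

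The only quibble is your parenthetical remark. Multiplication by $\gamma$ is a similarity simply because $\gamma$ is a nonzero complex number. Stability of $\mathbb{Z}[i]$ under $i$ plays no role, and you do not need it anyway once you divide by $\gamma$.
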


Next we record some elementary counting results. The proofs are routine extension of analogous results for integers.
\begin{lemma}\label{lem:counts}
For $y\ge2$:

\begin{enumerate}
\item
$A(y):=\#\{\alpha:N\alpha\le y\}=\pi y+O(\sqrt{y})$
\item
$T(y):=\displaystyle\sum_{N\alpha\le y}\frac{1}{N\alpha}=\pi\log y+O(1)$, and $\displaystyle\sum_{N\alpha\le y}\frac{1}{\sqrt{N\alpha}}\ll\sqrt{y}$
\item
$\#\{(\alpha,\beta):N\alpha N\beta\le y\}\ll y\log y$, and $\displaystyle\sum_{N\alpha N\beta\le y}\frac{1}{\sqrt{N\alpha N\beta}}\ll\sqrt{y}\log y$
\item
$t(y):=\displaystyle\sum_{N\alpha N\beta\le y}\frac{1}{N\alpha N\beta}=\frac{\pi^{2}}{2}(\log y)^{2}+O(\log y)$.
\item
$\tau(y):=\displaystyle\sum'_{N\alpha N\beta\le y}\frac{1}{N\alpha N\beta},
=\frac{t(y)}{\zeta_{K}(2)}+O(\log y)$ \\
where $\Sigma'$ indicates sum over mutually coprime
ideals $\alpha, \beta$.
\end{enumerate}

\end{lemma}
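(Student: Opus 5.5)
The plan is to prove the five estimates of \lemref{lem:counts} in order, each resting on the previous ones. Throughout, $\alpha,\beta$ run over nonzero Gaussian integers; sums over ideals differ only by the factor $4$ from the units, and this does not affect the argument.

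For (1), I will apply \lemref{lem:lattice} with $\gamma=1$, $c=0$ and $\varrho=\sqrt y$. Removing $\alpha=0$ gives $A(y)=\pi y+O(\sqrt y)$.

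For (2), I will use partial summation against $A(t)$. Writing $A(t)=\pi t+E(t)$ with $E(t)=O(\sqrt t)$,
\[
\sum_{N\alpha\le y}\frac1{N\alpha}=\frac{A(y)}{y}+\int_1^y\frac{A(t)}{t^2}\,dt .
\]
The main term $\pi t$ contributes $\pi\log y$ from the integral. The boundary term is $O(1)$, and $\int_1^\infty E(t)t^{-2}\,dt$ converges, so it is $O(1)$. The same partial summation against $t^{-1/2}$ gives $\sum_{N\alpha\le y}(N\alpha)^{-1/2}\ll\sqrt y$.

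For (3), I will sum over $\alpha$ with $N\alpha\le y$ and apply (1) and (2) to the inner sum over $\beta$ with $N\beta\le y/N\alpha$. This gives
\[
\sum_{N\alpha\le y}\Bigl(\pi\,\frac{y}{N\alpha}+O\bigl(\sqrt{y/N\alpha}\bigr)\Bigr)\ll y\log y .
\]
The weighted version works the same way: it reduces to $\sqrt y\sum_{N\alpha\le y}(N\alpha)^{-1}\ll\sqrt y\log y$.

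For (4), I will again fix $\alpha$ and use (2) for the $\beta$-sum, with the convention that the inner sum is empty when $N\alpha>y$:
\[
t(y)=\sum_{N\alpha\le y}\frac1{N\alpha}\Bigl(\pi\log\frac y{N\alpha}+O(1)\Bigr)
=\pi\log y\sum_{N\alpha\le y}\frac1{N\alpha}-\pi\sum_{N\alpha\le y}\frac{\log N\alpha}{N\alpha}+O(\log y).
\]
The first sum is $\pi^2(\log y)^2+O(\log y)$ by (2). For the second, partial summation with $A(t)$ gives $\sum_{N\alpha\le y}\log N\alpha/N\alpha=\frac{\pi}{2}(\log y)^2+O(\log y)$. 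Combining, $t(y)=\pi^2(\log y)^2-\frac{\pi^2}{2}(\log y)^2+O(\log y)=\frac{\pi^2}{2}(\log y)^2+O(\log y)$.

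For (5), the coprimality condition is removed by M\"obius inversion over ideals, $\sum_{\dd\mid(\alpha,\beta)}\mu_K(\dd)$. Substituting $\alpha=\dd\alpha'$ and $\beta=\dd\beta'$ turns the restricted sum into
\[
\sum_{N\dd\le\sqrt y}\frac{\mu_K(\dd)}{N\dd^{2}}\;t\!\left(\frac{y}{N\dd^{2}}\right).
\]
Inserting (4), $t(y/N\dd^2)=\frac{\pi^2}{2}(\log y-2\log N\dd)^2+O(\log y)$. The leading part is $t(y)\sum_{\dd}\mu_K(\dd)N\dd^{-2}$ with the sum truncated at $N\dd\le\sqrt y$. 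Completing it to $1/\zeta_K(2)$ costs $(\log y)^2\sum_{N\dd>\sqrt y}N\dd^{-2}\ll(\log y)^2y^{-1/2}$. The cross terms are bounded using $\sum_{\dd}\log N\dd/N\dd^2<\infty$ and $\sum_{\dd}(\log N\dd)^2/N\dd^2<\infty$, which contribute $O(\log y)$ and $O(1)$. Together these give $\tau(y)=t(y)/\zeta_K(2)+O(\log y)$.

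The main point needing care is the bookkeeping in (5), namely tracking the error $O(\log y)$ uniformly in $\dd$. For this I will use the convergence of $\sum_{\dd}N\dd^{-2}$, which keeps the total error at $O(\log y)$.
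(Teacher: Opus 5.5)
Your proof is correct, and it is exactly the routine argument the paper has in mind but omits (it only says these are `routine extensions' of the integer case). You use partial summation from the lattice count of \lemref{lem:lattice}, iterated summation for the two-variable sums, and M\"obius inversion over ideals together with $\sum_{\mathfrak{d}}\mu_K(\mathfrak{d})N\mathfrak{d}^{-2}=1/\zeta_K(2)$ for the coprime sum.

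One point should be stated more carefully than your opening remark does. The constants $\pi$ and $\pi^{2}/2$, and the way (5) is used in the proof of \thmref{thm:ingham}, force $\alpha,\beta$ to be nonzero Gaussian integers throughout, including in (5), where coprime means $(\alpha)+(\beta)=(1)$. That is what your substitution $\alpha=\delta\alpha'$, with a fixed generator $\delta$ of $\mathfrak{d}$, actually uses. The unit factor is therefore not harmless here: if the coprime sum in (5) were taken over pairs of ideals while $t(y)$ stays a sum over elements, the correct main term would be $t(y)/(16\,\zeta_K(2))$.
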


\subsection*{Proof of Theorem~\ref{thm:ingham}}

For $\nu\neq0$ the number of ordered pairs $(\mu,\sigma)$ of nonzero Gaussian integers with $\mu\sigma=\nu$ is $4d(\nu)$
due to presence of $4$ units $\pm 1,\pm i$.
Hence $16\,S(x)$ is equal to the number of quadruples of nonzero Gaussian integers $(\lambda,\mu,\rho,\sigma)$ satisfying
\begin{equation}\label{quadruple}
\lambda\rho-\mu\sigma=\kappa,\qquad N(\mu\sigma)\le x.
\end{equation}

For any quadruple $(\lambda, \rho, \mu, \sigma)$ satisfying (\ref{quadruple})
$$N(\lambda\mu)\cdot N(\rho\sigma)=N(\lambda\rho)\,N(\mu\sigma)=N(\nu+\kappa)\,N\nu\le\big(\sqrt{x}+|\kappa|\big)^{2}x=:X^{2},$$
say, so that at least one of the inequalities 
$N(\lambda\mu)\le X$, $N(\rho\sigma)\le X$ hold.

Let $P$ be the number of quadruples satisfying (\ref{quadruple}) for which the first inequality holds, 
$Q$ the number for which the second holds, and $R$ the number for which both hold. 
The involution 
$$(\lambda,\mu,\rho,\sigma)\mapsto(\rho,\sigma,\lambda,\mu)$$ preserves the conditions (\ref{quadruple}) and interchanges the two inequalities, so $Q=P$ and
$$16\,S=P+Q-R=2P-R.$$
The first sum can be decomposed as
$$P=\sum_{N(\lambda\mu)\le X}N(\lambda,\mu),$$
where $N(\lambda,\mu)$ is the number of solutions in nonzero Gaussian integers $\rho,\sigma$ of the equation
\begin{equation}\label{P-equation}
\lambda\rho-\mu\sigma=\kappa
\end{equation}

for which $0<N\sigma\le x/N\mu$.

Let $(\delta)=(\lambda)+(\mu)$ be the gcd ideal, $\delta$ a fixed generator, and write $\lambda=\delta\lambda'$, $\mu=\delta\mu'$, so that $(\lambda',\mu')=(1)$. If $(\delta)\nmid(\kappa)$, equation (\ref{quadruple}) has no solutions and $N(\lambda,\mu)=0$. If $(\delta)\mid(\kappa)$, then dividing by $\delta$ we can write the general solution as
\begin{equation}\label{parametrization}
\rho=\rho_{0}+\mu'h,\qquad\sigma=\sigma_{0}+\lambda'h,
\end{equation}
where $(\rho_{0},\sigma_{0})$ is a particular solution and $h$ an arbitrary Gaussian integer.

Using Lemma~\ref{lem:lattice} and the above inequality $N\sigma\le x/N\mu$ we get
\begin{equation}\label{N-mu-lambda}
N(\lambda,\mu)
=\#\{h\in\mathbb{Z}[i]: |\sigma_0+\lambda'h|\le \sqrt{x/N\mu}\}
=\frac{\pi x}{N\delta\,N\lambda'N\mu'}+O\!\left(\sqrt{\frac{x}{N\delta\,N\lambda'N\mu'}}+1\right)
\end{equation}
where we have also used $N\mu\,N\lambda'=N\delta\,N\lambda'N\mu'$.

Now we can count $P$ as
\[
P=\pi x\sum_{(\delta)\mid(\kappa)}\frac{1}{N\delta}
\sideset{}{'}
\sum_{N(\lambda'\mu')\le X/N\delta^{2}}\frac{1}{N\lambda'N\mu'}\;+\; E,
\]
where, by Lemma~\ref{lem:counts}(3),
\begin{align*}
E\ll & \sum_{(\delta)\mid(\kappa)}\left[\sqrt{\frac{x}{N\delta}}\sum_{N(\lambda'\mu')\le X/N\delta^{2}}\frac{1}{\sqrt{N\lambda'N\mu'}}\;+\;\#\Big\{(\lambda',\mu'):N(\lambda'\mu')\le\frac{X}{N\delta^{2}}\Big\}\right]\\
\ll & \sqrt{xX}\log X+X\log X\ll x\log x.
\end{align*}
Hence, by Lemma~\ref{lem:counts}(5),
$$P=\pi x\sum_{(\delta)\mid(\kappa)}\frac{1}{N\delta}\left\{\frac{\pi^2}{2\zeta_K(2)}\left(\log\frac{X}{N\delta^{2}}\right)^{2}+O(\log X)\right\}+O(x\log x).$$
Expanding $\big(\log X-2\log N\delta\big)^{2}=(\log X)^{2}+O(\log X)$ and using (3),
$$P=\frac{3\pi}{G}\,x(\log X)^{2}\sum_{(\delta)\mid(\kappa)}\frac{1}{N\delta}+O(x\log x)
=\frac{3\pi}{G}\,\sigma_{-1}(\kappa)\,x(\log x)^{2}+O(x\log x)
$$
where $G=\frac{6}{\pi^2}\zeta_K(2)$.

Now we turn to estimating $R$.
We have
$$R\le\sum_{N(\lambda\mu)\le X}N'(\lambda,\mu),$$
where $N'(\lambda,\mu)$ is the number of solutions of 
(\ref{P-equation}) in nonzero $\rho,\sigma$ for which $N(\rho\sigma)\le X$. Again $N'=0$ unless $(\delta)\mid(\kappa)$, in which case we use the parametrization (\ref{parametrization}). Writing $a=-\rho_{0}/\mu'$ and $b=-\sigma_{0}/\lambda'$ for the roots of the quadratic $h\mapsto(\rho_{0}+\mu'h)(\sigma_{0}+\lambda'h)$, the condition $N(\rho\sigma)\le X$ reads
$$|h-a|\,|h-b|\le\frac{\sqrt{X}}{|\lambda'\mu'|}=:T.$$
If both $|h-a|>\sqrt{T}$ and $|h-b|>\sqrt{T}$, the left side exceeds $T$; hence every admissible $h$ lies in one of two discs of radius $\sqrt{T}$, and since a disc of radius $r$ contains $O(r^{2}+1)$ Gaussian integers,
$$N'(\lambda,\mu)\ll T+1=\sqrt{\frac{X}{N\lambda'N\mu'}}+1.$$
Similarly as in the estimate the error term $E$ of $P$,
$$R\ll\sum_{(\delta)\mid(\kappa)}\left(\sqrt{X}\cdot\frac{\sqrt{X}}{N\delta}\log X+\frac{X}{N\delta^{2}}\log X\right)\ll X\log X\ll x\log x.
$$

Combining the estimates of $P$ and $R$.
$$16\,S(x)=\frac{6\pi}{G}\,\sigma_{-1}(\kappa)\,x(\log x)^{2}+O(x\log x),$$
and therefore
$$S(x)=\frac{3\pi}{8G}\,\sigma_{-1}(\kappa)\,x(\log x)^{2}+O(x\log x)=\frac{\pi^{3}}{16\,\zeta_{K}(2)}\,\sigma_{-1}(\kappa)\,x(\log x)^{2}+O(x\log x).$$ \qed

\subsection{Applying the theorem to restricted divisors}
\label{sec:applying}

Theorem~\ref{thm:ingham} is stated for the \emph{full} divisor function $\tau$,
whereas \eqref{eq:reduction} involves the $2$-restricted function $\tau^*$. We now
show that the passage from $\tau$ to $\tau^*$ is effected by a single Euler factor at
the ramified prime, and we track its effect on the main term explicitly. Throughout,
$\p_2=(1+i)$ denotes the unique prime ideal of $\Z[i]$ above $2$, with $N\p_2=2$ and
$(2)=\p_2^2$; ``odd'' means coprime to $\p_2$.

\begin{lemma}\label{lem:separate}
For every $\nu\in\Z[i]\setminus\{0\}$ write $(\nu)=\p_2^{\,a}\,\mathfrak{m}$ with
$\mathfrak{m}$ odd. Then
\[
\tau(\nu)=(a+1)\,\tau^*(\nu),\qquad\text{where }\ \tau^*(\nu)=\tau(\mathfrak m).
\]
\end{lemma}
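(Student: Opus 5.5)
The identity is a statement about multiplicativity of the ideal divisor counting function on $\Z[i]$, and I would prove it directly from unique factorisation of ideals in the Dedekind (indeed principal ideal) domain $\Z[i]$. Write $(\nu)=\p_2^{\,a}\mathfrak m$ with $\mathfrak m$ coprime to $\p_2$; this decomposition exists and is unique because $\p_2=(1+i)$ is a prime ideal and every nonzero ideal factors uniquely into prime ideals. The plan is to set up a bijection between the divisors of $(\nu)$ and pairs (exponent of $\p_2$, odd divisor of $\mathfrak m$), and then to identify the count of odd divisors with $\tau^*(\nu)$ as defined.

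First I would show that every ideal $\dd\mid(\nu)$ can be written uniquely as $\dd=\p_2^{\,j}\mathfrak e$ with $0\le j\le a$ and $\mathfrak e\mid\mathfrak m$, $\mathfrak e$ odd. Existence: factor $\dd$ into primes and separate the $\p_2$-part from the rest; since $\dd\mid(\nu)$, comparing exponents prime by prime gives $j=v_{\p_2}(\dd)\le a$ and, for each odd prime $\mathfrak q$, $v_{\mathfrak q}(\mathfrak e)\le v_{\mathfrak q}(\mathfrak m)$, i.e. $\mathfrak e\mid\mathfrak m$. Uniqueness follows from uniqueness of prime factorisation. Conversely every such pair yields a divisor of $(\nu)$. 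Hence $\tau(\nu)=(a+1)\cdot\#\{\mathfrak e:\mathfrak e\mid\mathfrak m\}=(a+1)\tau(\mathfrak m)$.

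Second I would check that $\tau(\mathfrak m)=\tau^*(\nu)$: by definition $\tau^*(\nu)$ counts ideals dividing $(\nu)$ that are coprime to $2$, i.e. coprime to $\p_2$ (since $(2)=\p_2^2$). By the bijection above these are exactly the divisors with $j=0$, namely the divisors of $\mathfrak m$. Equivalently, writing $\mathfrak m=(\nu_1)$ with $\nu_1$ the largest odd divisor of $\nu$ up to units, this matches the alternative description in the definition.

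There is no real obstacle; the only point requiring a moment's care is making explicit that "coprime to $2$" and "coprime to $\p_2$" coincide, which uses that $\p_2$ is the unique prime above $2$ (ramified, $(2)=\p_2^2$), so that an ideal is coprime to $(2)$ iff its $\p_2$-adic valuation vanishes.
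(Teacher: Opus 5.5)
Your proof is correct and complete. The paper states this lemma without any proof, treating it as the standard multiplicativity of the ideal divisor function under unique factorisation, with $\tau(\mathfrak{p}_2^{\,a})=a+1$ and the odd part contributing $\tau(\mathfrak m)$; your bijection $\dd\leftrightarrow(j,\mathfrak e)$, together with your check that ``coprime to $2$'' means $v_{\mathfrak{p}_2}=0$, is exactly that verification written out.
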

The following proposition adapts Ingham's theorem to the present
context.

\begin{prop}\label{prop:main}
As $T\to\infty$,
\begin{align}
S(T)=\sum_{N(\mu)\le T}\tau^*(\mu-2)\,\tau^*(\mu+2)
&=\frac{\pi^{3}}{48\,\zeta_{K}(2)}\,T(\log T)^{2}+O\!\left(T\log T\right) \notag\\
&=\frac{\pi}{8G}\,T(\log T)^{2}+O\!\left(T\log T\right),
\end{align}
where $G=L(2,\chi_{-4})=0.9159655\ldots$ is Catalan's constant.
\end{prop}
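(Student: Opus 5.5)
We deduce Proposition~\ref{prop:main} from Theorem~\ref{thm:ingham} with shift $\kappa=4$, applied along a substitution $\nu=\mu-2$, so that $\nu+\kappa=\mu+2$. The difficulty is that Theorem~\ref{thm:ingham} counts the full divisor function $\tau$, while $S(T)$ involves $\tau^*$. By Lemma~\ref{lem:separate}, $\tau(\nu)=(a+1)\tau^*(\nu)$ with $a=v(\nu)$. The idea is therefore to split the sum according to the $2$-adic behaviour of $\mu$, and on each piece remove the factor $(a+1)(a'+1)$. Since $v(\mu\pm 2)$ is governed by $\lambda=v(\mu)$ as in the proof of Lemma~\ref{lem:e}, this factor is essentially constant on residue classes of $\mu$ modulo a power of $\pi$.

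\textbf{Step 1: a congruence-restricted Ingham theorem.} Redo the proof of Theorem~\ref{thm:ingham} with $\nu$ restricted to a fixed residue class $\nu\equiv\nu_0\pmod{\pi^r}$, and with the divisors $\mu,\sigma$ (resp. $\lambda,\rho$) of $\nu$ (resp. $\nu+\kappa$) restricted to be odd. Oddness of the divisor pairs is exactly what $\tau^*$ requires. In the quadruple count \eqref{quadruple}, the congruence on $\nu=\mu\sigma$ becomes a congruence on $\sigma$ modulo $\pi^r$. In the parametrization \eqref{parametrization}, $\lambda'$ is now odd, so this congruence becomes a congruence on $h$ modulo $\pi^r$. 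Lemma~\ref{lem:lattice}, applied to a sublattice of index $2^r$, then produces a density factor. The oddness of $\lambda',\mu'$ replaces the sums in Lemma~\ref{lem:counts}(4),(5) by their odd analogues. This removes an Euler factor $(1-1/2)$ from each of the two logarithmic sums and a factor $(1-1/4)^{-1}$ from the coprimality density $\zeta_K(2)^{-1}$. The error terms are estimated exactly as before, giving $O(T\log T)$.

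\textbf{Step 2: direct computation of the constant.} Alternatively, and more simply, restrict to odd divisors from the outset: $4S(T)$ counts quadruples with $\lambda,\mu,\rho,\sigma$ odd, $\lambda\rho-\mu\sigma=4$, and $N(\mu\sigma)\le T$ (a harmless shift). The main term then becomes
\[
\pi x\sum_{\substack{(\delta)\mid(4)\\ \delta\ \mathrm{odd}}}\frac{1}{N\delta}\,{\sum}'_{\substack{N(\lambda'\mu')\le X\\ \lambda',\mu'\ \mathrm{odd}}}\frac{1}{N\lambda' N\mu'}\cdot c_2.
\]
Here the only admissible $\delta$ is $1$, and $c_2$ is the local density of $h$ for which $\sigma=\sigma_0+\lambda' h$ is odd; since $\lambda'$ is odd, $c_2=1/2$. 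The odd coprime sum equals
\[
\frac{\pi^2}{2}\cdot\frac{1}{4}\cdot\frac{(\log x)^2}{\zeta_K(2)(1-1/4)}\,(1+o(1)).
\]
The condition that $\rho$ be odd is automatic modulo $\pi$, because $\lambda\rho=4+\mu\sigma$ is odd whenever $\mu\sigma$ is odd. Multiplying these factors (including $c_2$), halving against the symmetric $P$, $Q$, $R$ decomposition, and checking the result against the target $\frac{\pi^3}{48\zeta_K(2)}=\frac13\cdot\frac{\pi^3}{16\zeta_K(2)}$ fixes the bookkeeping.

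\textbf{Step 3: the main obstacle.} The hard part is getting these $2$-adic densities exactly right: the factor $1/3$ relative to Theorem~\ref{thm:ingham} must emerge from the odd restrictions and the unit counting, and it is easy to be off by powers of $2$. To guard against this, I would verify the constant independently by a heuristic local computation. Namely, compute the average over $\mu\in\Z_2[i]$ of the $2$-part of the weight $\tau(\mu-2)\tau(\mu+2)$, using the valuations in Lemma~\ref{lem:e}, and compare it with $\sigma_{-1}(4)$ times the generic normalisation. The error terms, $R$, and the use of the lower cut-off $N\mu>4$ require nothing new beyond the proof of Theorem~\ref{thm:ingham}, and all contribute $O(T\log T)$.
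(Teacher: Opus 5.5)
\paragraph{The gap.} Your Step~2, and already Step~1, does not encode $\tau^*$ correctly: requiring all four of $\lambda,\mu,\rho,\sigma$ to be odd is the wrong condition. $\tau^*(\nu)$ counts odd divisors of a possibly \emph{even} $\nu$, i.e.\ factorizations $\nu=\mu\sigma$ in which only $\mu$ is odd. A factorization with both $\mu$ and $\sigma$ odd exists only when $\nu$ itself is odd, and your factor $c_2=1/2$ is exactly the density of odd $\nu=\mu\sigma$.

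Your quadruple count is therefore
\[
16\sum_{N\mu\le T,\ \mu\ \mathrm{odd}}\tau(\mu-2)\,\tau(\mu+2).
\]
The factor is $16$, not $4$, since each of the two factorizations carries four units, as in the paper's $16S$. Carrying your own factors through gives $2P/16=\frac{\pi^{3}}{96\,\zeta_K(2)}$, which is half the target. All terms with $v(\mu)\ge1$ are lost, including the stratum $v(\mu)=2$ where $v(\mu\pm2)$ is unbounded. Saying that comparison with $\frac{\pi^{3}}{48\,\zeta_K(2)}$ ``fixes the bookkeeping'' is circular. Step~1 does not repair this: on an even class $\nu_0$ the all-odd count is empty, and dividing out $(a+1)(a'+1)$ only makes sense for the full $\tau$, not combined with odd divisors.

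\paragraph{The paper's route.} The paper does not use quadruples here. It expands $\tau^*(\mu-2)\,\tau^*(\mu+2)$ directly as a sum over pairs of odd ideals $\mathfrak{d}_1\mid(\mu-2)$, $\mathfrak{d}_2\mid(\mu+2)$. It then counts \emph{all} $\mu$, with no parity condition, in the resulting CRT class modulo $\mathfrak{d}_1\mathfrak{d}_2$. Coprimality of $\mathfrak{d}_1,\mathfrak{d}_2$ is forced because their gcd is odd and must divide $4$; this is what replaces $\sigma_{-1}(4)$ by $1$. The only remaining local changes are the two you correctly identified:
\begin{itemize}
\item $(1-\tfrac12)^2$ from restricting the two logarithmic sums to odd ideals;
\item $(1-\tfrac14)^{-1}$ from the odd coprimality density.
\end{itemize}
With no $c_2$ factor, this gives exactly $\tfrac13$ of the constant in Theorem~\ref{thm:ingham}.

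\paragraph{Your alternative plan.} Your first-paragraph idea is to split $\mu$ into $2$-adic classes and divide out $(a+1)(a'+1)$. To make it work you would need two ingredients, neither of which the proposal supplies:
\begin{itemize}
\item a congruence-restricted Ingham theorem for the \emph{full} $\tau$, uniform in the modulus $\pi^r$;
\item a tail bound for the infinitely many classes in the $v(\mu)=2$ stratum, in the spirit of Lemma~\ref{lem:tail}.
\end{itemize}
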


\begin{proof}
We address the changes at the three places which contribute to the constant of Theorem~\ref{thm:ingham} when $\tau$ is replaced by $\tau^*$ and the
shift is $\kappa=4=(1+i)^4$ (a power of $\p_2$). Recall
$\rho:=\operatorname*{Res}_{s=1}\zeta_K(s)=\pi/4$.\\

\smallskip
\emph{(i) Change in $\sigma_{-1}(\kappa)$:} Expanding
$\tau^*(\mu-2)\,\tau^*(\mu+2)=\sum_{\dd_1\mid(\mu-2)}\sum_{\dd_2\mid(\mu+2)}1$ over
\emph{odd} ideals $\dd_1,\dd_2$ and interchanging, the inner count requires
\begin{align}
\mu\equiv 2 \ (\mathrm{mod}\ \dd_1),\qquad \mu\equiv -2\ (\mathrm{mod}\ \dd_2).
\end{align}
By the Chinese Remainder Theorem these are simultaneously solvable if and only if
$\mathfrak g:=(\dd_1,\dd_2)$ divides $(4)=\p_2^{4}$. But $\mathfrak g$ is odd, so only \emph{coprime} pairs $(\dd_1,\dd_2)$ contribute. Thus  $\sigma_{-1}(\kappa)$, which in the unrestricted problem records the
common divisors of $\dd_1,\dd_2$ dividing $\kappa$, is now replaced by $1$.

\smallskip
\emph{(ii) Change in Dirichlet series:}
The relevant Dirichlet
series is $(1-2^{-s})\zeta_K(s)$ rather than $\zeta_K(s)$, and the residue at $s=1$ becomes
\begin{align}
\rho^*:=\operatorname*{Res}_{s=1}\big[(1-2^{-s})\zeta_K(s)\big]
=\tfrac12\rho=\frac{\pi}{8},
\end{align}
This gives 
$$\sum_{\mathfrak e\ \mathrm{odd},\,N\mathfrak e\le Y}N\mathfrak e^{-1}
=\tfrac{\pi}{8}\log Y+O(1).$$ 
As this occurs once for each of the two factors, so the constant is multiplied by $(\rho^*/\rho)^2=\tfrac14$.

\smallskip
\emph{(iii) Change in the Zeta-factor:} 
Removing the coprimality condition
$(\dd_1,\dd_2)=(1)$ by M\"obius inversion over \emph{odd} ideals replaces
$\frac{1}{\zeta_K(2)}=\prod_{\p}\big(1-N\p^{-2}\big)$ by
\begin{align}
\prod_{\p\ \mathrm{odd}}\big(1-N\p^{-2}\big)
=\frac{1}{\zeta_K(2)}\Big(1-N\p_2^{-2}\Big)^{-1}
=\frac{1}{\zeta_K(2)}\Big(1-\tfrac14\Big)^{-1}
=\frac{4}{3\,\zeta_K(2)} .
\end{align}

\smallskip
Starting from the theorem's constant
$\frac{\pi^3}{16\zeta_K(2)}\sigma_{-1}(\kappa)$, 
applying (i),(ii), and (iii), the new constant is
\begin{align*}
C^*=\frac{\pi^{3}}{16\,\zeta_K(2)}\cdot\underbrace{1}_{(i)}\cdot
\underbrace{\tfrac14}_{(ii)}\cdot\underbrace{\tfrac43}_{(iii)}
=\frac{\pi^{3}}{16\,\zeta_K(2)}\cdot\frac13
=\frac{\pi^{3}}{48\,\zeta_K(2)} .
\end{align*}
Recall
$\zeta_K(s)=\zeta(s)L(s,\chi_{-4})$, so
$\zeta_K(2)=\zeta(2)L(2,\chi_{-4})=\frac{\pi^2}{6}G$, where $G$
is the Catalan constant. So the constant in the main term is
\begin{align*}
\frac{\pi^{3}}{48\,\zeta_K(2)}=\frac{\pi}{8G}.
\end{align*}
\end{proof}

\begin{rk}
The error term $O(T\log T)$ is the limit of Ingham's elementary method. Improving it to a
power saving $O(T^{1-\delta})$ requires cancellation in sums of Kloosterman sums for the
Picard group $PSL(2,\Z[i])$, exactly as the Deshouillers--Iwaniec bound
$O(x^{2/3+\varepsilon})$ replaces Ingham's $O(x\log x)$ in the classical rational case. Furthermore we refer to \cite{bruggeman2003sum} for Kloosterman sums estimates over the Gaussian number field.
\end{rk}

\subsection{The ramified part: matrices with $(1+i)\mid c$}
\label{sec:ramified}

\subsubsection{The local weights $W(e)$ :}

Fix $\mu$ and $j$, and write $c=\pi^{j}\gamma$, $\Delta/c=\pi^{e-j}\sigma$ with
$\gamma,\sigma$ odd and $\gamma\sigma=\Delta_{1}$. 
By Lemma~\ref{lem:admiss} we have $\Delta-c^{2}k^{2}=4bc$, so that condition
\eqref{eq:c} may be restated as
\begin{equation}\label{eq:cprime}
\eqref{eq:c}\ \text{ holds if and only if }\ 
v\!\left(\frac{\Delta}{c}-ck^{2}\right)\ge 4.
\end{equation}
This can be rewritten depending on values of $k$ as
\begin{align}
k=0:\ &\ e-j\ge4; \notag\\
k\in\{1,i\}:\ &\ v\bigl(\pi^{e-j}\sigma\mp\pi^{j}\gamma\bigr)\ge4; \label{eq:cases}\\
k=1+i:\ &\ v\bigl(\pi^{e-j}\sigma-\pi^{j+2}\gamma\bigr)\ge4. \notag
\end{align}
Moreover, \eqref{eq:b} reads $j+v(k)=\lambda$ if $\lambda\le1$, and $j+v(k)\ge2$ if
$\lambda\ge2$.

The next lemma specifies the values taken by $W(e)$

\begin{prop}\label{prop:W}
For each $j$ the number of admissible $k$ does not depend on which $c$ with $v(c)=j$
is chosen. Writing $W(e)$ for the total number of admissible pairs $(j,k)$,
\begin{align}
W(0)=1,\qquad W(2)=2,\qquad W(4)=4,\qquad W(6)=6,\qquad
W(e)=4e-22\quad(e\ge9).
\label{eq:Wvalues}
\end{align}
\end{prop}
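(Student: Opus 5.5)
The plan is to compute $W(e)$ by direct enumeration, splitting on $\lambda=v(\mu)$ through Lemma~\ref{lem:e}. For fixed $\mu$, an admissible pair $(j,k)$ has $0\le j\le e$, since $c=\pi^j\gamma$ must divide $\Delta=\pi^e\Delta_1$. Condition \eqref{eq:b} is a pure valuation condition on $j$, using $v(0)=\infty$, $v(1)=v(i)=0$ and $v(1+i)=1$:
\begin{itemize}
\item if $\lambda\le1$, it reads $j+v(k)=\lambda$, which forces $k\neq0$;
\item if $\lambda\ge2$, it reads $j+v(k)\ge2$, so $k=0$ allows every $j$, $k=1+i$ needs $j\ge1$, and $k\in\{1,i\}$ need $j\ge2$.
\end{itemize}
Condition \eqref{eq:c} is then read off from \eqref{eq:cases}. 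We use that $ck^2=\pm c$ for $k\in\{1,i\}$ and $ck^2=2ic$ with $v(2i)=2$ for $k=1+i$. Since $\gamma$ and $\sigma$ are odd, $v(\pi^a\sigma-\varepsilon\pi^b\gamma)=\min(a,b)$ whenever $a\ne b$. So off the "diagonal" $a=b$, condition \eqref{eq:c} says exactly that $\min(a,b)\ge4$, which depends only on $j$. This proves the independence claim away from the diagonal.

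The diagonal cases ($e-j=j$ for $k\in\{1,i\}$, and $e-j=j+2$ for $k=1+i$) are the only place where $\gamma$ itself could matter, since there we need $v(\sigma\mp\gamma)\ge 4-a$. First I would note that $v(\sigma\mp\gamma)\ge1$ always holds, because every odd element is $\equiv1\bmod\pi$. So the diagonal is automatically admissible once $a\ge3$. If $a=2$ is needed, I would use $\sigma\gamma=\Delta_1$ to rewrite $\sigma\equiv\gamma\pmod 2$ as $\Delta_1\equiv\gamma^2\pmod 2$. Since $(\Z[i]/2)^\times=\{1,i\}$ and every odd square is $\equiv1\pmod 2$, this condition depends only on $\Delta_1$ and not on $\gamma$. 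The two signs agree because $v(2\gamma)=2$. This settles the independence statement in all cases.

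Then I count. For $\lambda=0$ ($e=0$), only $j=0$ is possible, and \eqref{eq:b} leaves $k\in\{1,i\}$. Condition \eqref{eq:c} for $k=1$ says $\Delta_1\equiv\gamma^2\pmod 4$, while $k=i$ gives $\Delta_1\equiv-\gamma^2\pmod 4$. I expect exactly one of these to hold, by comparing squares in $(\Z[i]/4)^\times$ with the class of $\mu^2-4\equiv\mu^2$, which gives $W(0)=1$. The cases $e=2$ and $e=4$ are handled similarly by small checks. For $e=6$ one finds the following, for a total of $6$:
\begin{itemize}
\item $k=0$ gives $j\in\{0,1,2\}$;
\item $k=1$ and $k=i$ each give only the diagonal $j=3$;
\item $k=1+i$ gives only the diagonal $j=2$.
\end{itemize}
For $e\ge9$ (so $\lambda=2$) the counts are as follows:
\begin{itemize}
\item $k=0$ needs $j\le e-4$, giving $e-3$ values;
\item $k\in\{1,i\}$ need $4\le j\le e-4$, the diagonal included, giving $e-7$ values each;
\item $k=1+i$ needs $j\ge1$ and $\min(e-j,j+2)\ge4$, i.e. $2\le j\le e-4$, giving $e-5$ values.
\end{itemize}
The sum is $(e-3)+2(e-7)+(e-5)=4e-22$.

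I expect the main obstacle to be the low-valuation cases $e\in\{0,2,4\}$. There condition \eqref{eq:c} genuinely involves residues modulo $4$ rather than valuations alone. One has to show that, for each admissible $j$, the square-class condition on $\Delta_1$ is satisfied by the stated number of $k$'s. This must hold uniformly in $\mu$, using the explicit shape $\Delta=\mu^2-4$ with $v(\mu)=\lambda$. I would handle it by reducing $\mu$ modulo $4$ (or $\pi^6$ when $\lambda=1$) and checking the finitely many residue classes by hand.
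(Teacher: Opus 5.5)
Your setup matches the paper's: read \eqref{eq:b} as a valuation condition, read \eqref{eq:c} through \eqref{eq:cases}, and split the off-diagonal case, where \eqref{eq:c} is just $\min(a,b)\ge4$, from the diagonal case. Your counts for $e=6$ and $e\ge9$ agree with the paper's term by term. The proposal is nonetheless incomplete exactly where arithmetic beyond valuations is needed, and one of your auxiliary claims is false.

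First, \eqref{eq:b} is not a pure valuation condition when $\lambda=0$. With $j=0$ it says $\gamma k\equiv\mu\pmod 2$, and since $(\Z[i]/2)^\times=\{1,i\}$ this already selects a single $k\in\{1,i\}$. Condition \eqref{eq:c} is then automatic by Remark~\ref{rk:odd}, which is the paper's short proof of $W(0)=1$. Your count, which imposes \eqref{eq:c} on both $k\in\{1,i\}$, gives the right number only because \eqref{eq:c} happens to imply the full \eqref{eq:b} here. That holds because, for odd $x$, $x^2\equiv1\pmod 4$ forces $x\equiv1\pmod 2$, but you never check it.

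Second, your independence argument only treats diagonal cases with $a\ge2$. The diagonal with $a=1$ occurs at $e=2$, $j=1$, $k\in\{1,i\}$. There \eqref{eq:c} reads $\Delta_1\equiv\pm\gamma^2\pmod{\pi^3}$, and which sign works genuinely depends on $\gamma$. The missing idea, which is the real content of $W(2)=2$, has two parts:
\begin{itemize}
\item the squares of units modulo $\pi^3$ are exactly the two distinct classes $\pm1$;
\item $\Delta_1$ is itself a square modulo $\pi^3$: writing $\mu\mp2=\pi\alpha_\mp$ gives $\alpha_+-\alpha_-=-\pi^3$, so $\Delta_1=\alpha_-\alpha_+\equiv\alpha_-^2$.
\end{itemize}
Without the second fact, $j=1$ could contribute $0$ rather than $1$.

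Likewise $W(4)=4$ needs $\Delta_1=1-\nu^2\equiv1\pmod{\pi^2}$, so that both $k=1$ and $k=i$ survive at $j=2$. It also needs the routine checks that $k=0$ fails for $j\ge1$, that $k=1+i$ fails at $j=2$, and that nothing survives at $j=3,4$.

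As written, two of the five asserted values are left to unspecified ``small checks,'' so the proposition is not yet proved. Your plan would succeed once these computations are supplied.
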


\begin{proof}
\emph{Case $e=0$.} Here $j=0$ and \eqref{eq:b} determines $k$ uniquely among the two
unit classes $\{1,i\}$; \eqref{eq:c} is automatic. Hence $W(0)=1$.

\emph{Case $e=2$ \textup{(}$\lambda=1$\textup{)}.} Condition \eqref{eq:b} forces
$j+v(k)=1$. For $j=0$ we must take $k=1+i$, and \eqref{eq:c} is automatic. For $j=1$
we have $k\in\{1,i\}$ and \eqref{eq:cases} becomes
$v(\sigma\mp\gamma)\ge3$, i.e. $\Delta_{1}\equiv\pm\gamma^{2}\pmod{\pi^{3}}$ after
multiplying by the unit $\gamma$. The squares of units modulo $\pi^{3}$ are
$\{1,-1\}$, and $\{\gamma^{2},-\gamma^{2}\}$ is exactly this set; moreover
$\gamma^{2}\not\equiv-\gamma^{2}$ since $v(2\gamma^{2})=2<3$. Hence \emph{exactly one}
sign can occur, and it does occur precisely when $\Delta_{1}$ is a square mod
$\pi^{3}$. Writing $\mu\mp2=\pi\alpha_{\mp}$ we have
$\alpha_{+}-\alpha_{-}=4/\pi=-\pi^{3}$, so $\Delta_{1}=\alpha_{-}\alpha_{+}\equiv
\alpha_{-}^{2}$ is a square. Thus $j=1$ contributes exactly $1$, and $W(2)=2$.

\emph{Case $e=4$ \textup{(}$\lambda\ge3$\textup{)}.} Now \eqref{eq:b} is
$j+v(k)\ge2$. For $j=0$ only $k=0$ qualifies, and \eqref{eq:c} is automatic. For $j=1$
we may take $k\in\{0,1+i\}$: the choice $k=0$ fails since $e-j=3<4$, while $k=1+i$
gives $\pi^{3}(\sigma-\gamma)$, of valuation $\ge3+1=4$, so it succeeds. For $j=2$ all
four $k$ pass \eqref{eq:b}; $k=0$ fails ($e-j=2$), $k=1+i$ fails
($v(\pi^{2}\sigma-\pi^{4}\gamma)=2$), while for $k\in\{1,i\}$ we need
$v(\sigma\mp\gamma)\ge2$, and since $-1\equiv1\pmod{\pi^{2}}$ both conditions coincide
and read $\Delta_{1}\equiv\gamma^{2}\equiv1\pmod{\pi^{2}}$. Here
$\Delta=-\pi^{4}(\nu^{2}-1)$ with $\nu$ even, so $\Delta_{1}=1-\nu^{2}\equiv1$; both
$k=1$ and $k=i$ succeed. For $j\in\{3,4\}$ we have $v(\Delta/c)=e-j\le1$ while
$v(ck^{2})\ge3$, so \eqref{eq:cprime} fails. Hence $W(4)=1+1+2=4$.

\emph{Case $e\ge6$ \textup{(}$\lambda=2$\textup{)}.} Again \eqref{eq:b} is
$j+v(k)\ge2$.
\begin{itemize}
\item $k=0$ is admissible for every $j$, and \eqref{eq:cases} holds iff $j\le e-4$:
this gives $e-3$ pairs.
\item $k\in\{1,i\}$ requires $j\ge2$. If $j\neq e-j$ the valuation in
\eqref{eq:cases} equals $\min(j,e-j)$, so the condition is $4\le j\le e-4$, and then
\emph{both} signs work: $2\max(0,e-7)$ pairs. If $j=e/2$ the expression is
$\pi^{j}(\sigma\mp\gamma)$ of valuation $\ge j+1$, so both signs work as soon as
$j\ge3$; for $e\ge8$ even this $j$ already lies in $[4,e-4]$, whereas for $e=6$ it is
the extra value $j=3$, contributing $2$ more pairs.
\item $k=1+i$ requires $j\ge1$. If $e-j\neq j+2$ the condition is
$\min(e-j,j+2)\ge4$, i.e. $2\le j\le e-4$. If $e-j=j+2$ the expression is
$\pi^{j+2}(\sigma-\gamma)$, of valuation $\ge j+3\ge4$, so it succeeds; and
$j=(e-2)/2$ lies in $[2,e-4]$ for $e\ge6$. Altogether $\max(0,e-5)$ pairs.
\end{itemize}
Summing,
\begin{align}
W(e)=(e-3)+2\max(0,e-7)+2\,[e=6]+\max(0,e-5),
\end{align}
which equals $6$ for $e=6$ and $4e-22$ for $e\ge9$. In every case the criterion
depended on $\gamma$ only through quantities ($\Delta_{1}$ modulo a power of $\pi$, or
nothing at all) that are independent of the divisor chosen, which proves the first
assertion.
\end{proof}

\subsection{Averaging the local weight}

The weight $W(e(\mu))$ depends on $\mu$ only through its image in $\Z_{2}[i]$, the ring
of $2$-adic Gaussian integers (the completion of $\mathbb Z[i]$ at $\mathfrak{p}$); indeed by
Lemma~\ref{lem:e} the value $e(\mu)$ is determined by the class of $\mu$ modulo a power
of $\pi$. Equip $\Z_{2}[i]$ with its normalised additive Haar measure $\Pb$, under which
each residue class modulo $\pi^{n}$ has measure $2^{-n}=N(\pi^{n})^{-1}$. This is the
natural measure to average against, because the Gaussian integers $\mu$ with $N\mu\le T$ become equidistributed among residue classes modulo any fixed $\pi^{n}$ (see Lemma~\ref{lem:equi} below). 

The first lemma of this section is on the explicit distribution function of $\lambda=v(\mu)$, now considered a random variable on $\Z_{2}[i]$.
\begin{lemma}\label{lem:dist}
Under the Haar measure $\Pb$ on $\Z_2[i]$:
\begin{enumerate}
\item[\textup{(i)}] $\Pb\bigl(v(\mu)\ge n\bigr)=2^{-n}$ for all $n\ge0$; equivalently
$\Pb\bigl(v(\mu)=n\bigr)=2^{-n-1}$. In particular
\begin{align}
\Pb(\lambda=0)=\tfrac12,\quad \Pb(\lambda=1)=\tfrac14,\quad
\Pb(\lambda=2)=\tfrac18,\quad \Pb(\lambda\ge3)=\tfrac18 .
\end{align}
\item[\textup{(ii)}] Conditional on $\lambda=2$, write $\mu=2\nu$ with $\nu=1+\pi t$ as
in Lemma~\ref{lem:e}; then $\Pb(v(t)=m)=2^{-m-1}$.
Moreover, given $v(t)=1$, $\Pb(v(r)=n)=2^{-n-1}$ (where $r$ is as in the proof of Lemma \ref{lem:e}.
\end{enumerate}
\end{lemma}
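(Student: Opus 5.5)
The plan is to realise $\Z_2[i]$ concretely as the completion of $\Z[i]$ at $\p_2=(\pi)$, $\pi=1+i$, and to read off every statement from the structure of the residue classes modulo powers of $\pi$. The Haar measure $\Pb$ is normalised so that each class modulo $\pi^n$ has measure $2^{-n}$; this is consistent because $\Z[i]/(\pi^n)$ has $N(\pi^n)=2^n$ elements.

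For part (i), the event $\{v(\mu)\ge n\}$ is exactly the single residue class $\mu\equiv0\pmod{\pi^n}$, so it has measure $2^{-n}$. Subtracting the cases $n$ and $n+1$ gives $\Pb(v(\mu)=n)=2^{-n}-2^{-n-1}=2^{-n-1}$. The four listed values follow by taking $n=0,1,2$, together with $\Pb(\lambda\ge3)=2^{-3}$.

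For part (ii) I would use translation invariance and the scaling behaviour of Haar measure. Conditioning on $\lambda=2$ means conditioning on $\mu\in 2\,\Z_2[i]^\times$, which is the set $\pi^2\cdot(\text{units})$ up to the unit $-i$. The map $\mu\mapsto\nu=\mu/2$ is a bijection from $\{v(\mu)=2\}$ onto the unit group $\Z_2[i]^\times=1+\pi\Z_2[i]$, and it scales measure by the constant factor $4$. Hence the conditional law of $\nu$ is normalised Haar measure on $1+\pi\Z_2[i]$.

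Next, the affine map $t\mapsto 1+\pi t$ is a bijection $\Z_2[i]\to 1+\pi\Z_2[i]$ that multiplies measure by $1/2$. So $t$ is Haar-distributed on $\Z_2[i]$, and part (i) gives $\Pb(v(t)=m)=2^{-m-1}$.

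For the last claim, condition further on $v(t)=1$ and write $t=\pi s$ with $s$ a unit. The same scaling argument shows that $s$ is Haar-distributed on the units. The map $s\mapsto s/i$ preserves the units and the measure, and then $s/i=1+\pi r$ makes $r$ Haar-distributed on $\Z_2[i]$, exactly as in the previous step. Part (i) then yields $\Pb(v(r)=n)=2^{-n-1}$.

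The only point needing care is the bookkeeping of the scaling factors. Multiplication by $\pi^a$ multiplies Haar measure by $2^{-a}$, because it sends classes modulo $\pi^n$ to classes modulo $\pi^{n+a}$. Once that is fixed, each normalised conditional measure is Haar measure, and no further computation is required.
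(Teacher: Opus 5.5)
Your proposal is correct and takes essentially the paper's route: part (i) via the single residue class $\pi^n\Z_2[i]$, and part (ii) by transporting the conditional Haar measure through the affine bijection $\mu\mapsto(\mu/2-1)/\pi$ onto $\Z_2[i]$, reapplying (i), and then repeating the step for $r$. The only difference is that you factor this map as $\mu\mapsto\mu/2$ followed by $\nu\mapsto(\nu-1)/\pi$ and track the scaling factors $4$ and $2$ explicitly; their product $8=\Pb(\lambda=2)^{-1}$ is exactly what makes the normalisation work, a point the paper leaves implicit.
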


\begin{proof}
(i) The event $v(\mu)\ge n$ is $\mu\in\pi^{n}\Z_{2}[i]$, a single residue class modulo
$\pi^{n}$, of Haar measure $2^{-n}$. Clearly
\[
\Pb(v(\mu)=n)=\Pb(v(\mu)\ge n)-\Pb(v(\mu)\ge n+1)=2^{-n}-2^{-n-1}=2^{-n-1}.
\] 
The four displayed probabilities are the cases $n=0,1,2$ and the tail $n\ge3$.\\ 
(ii) The map $\mu\mapsto t=(\mu/2-1)/\pi$ is, on the
class $\lambda=2$, an affine bijection onto $\Z_{2}[i]$ carrying normalised Haar measure
to normalised Haar measure. Hence (i) applied to $t$ (and then to $r$)
gives the stated laws.
\end{proof}

\begin{prop}\label{prop:EW}
With $\Pb$ as above, the average of the local weight is
$\displaystyle \mathbb{E}_{\Pb}\bigl[W(e(\mu))\bigr]=3 .$
\end{prop}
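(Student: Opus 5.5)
The plan is a direct computation: partition $\Z_2[i]$ according to the value of $e=e(\mu)$ and evaluate
\[
\mathbb{E}_{\Pb}[W]=\sum_e \Pb(e(\mu)=e)\,W(e),
\]
using the dictionary between $e$ and $\lambda$ from Lemma~\ref{lem:e}, the values of $W$ from Proposition~\ref{prop:W}, and the distributions in Lemma~\ref{lem:dist}. First, note that $W(e(\mu))$ is a measurable function on $\Z_2[i]$: by Lemma~\ref{lem:e} it is locally constant off the null set $\{\Delta=0\}$, i.e.\ off $\{\mu=\pm2\}$. So the expectation is a countable sum, and every term is nonnegative, which justifies rearranging it freely.

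\textbf{Step 1: the cases $\lambda\ne2$.} By Lemma~\ref{lem:e}, $\lambda=0,1,\ge3$ correspond to $e=0,2,4$. Lemma~\ref{lem:dist}(i) gives these the probabilities $\tfrac12,\tfrac14,\tfrac18$. Combined with $W(0)=1$, $W(2)=2$, $W(4)=4$, each of the three contributes $\tfrac12$, for a total of $\tfrac32$.

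\textbf{Step 2: the case $\lambda=2$.} This event has probability $\tfrac18$. Conditionally on it, Lemma~\ref{lem:dist}(ii) makes $t$ Haar-distributed, and we split according to $v(t)$, using the case analysis in the proof of Lemma~\ref{lem:e}.
\begin{enumerate}
\item If $v(t)=0$ (conditional probability $\tfrac12$), then $e=6$ and $W=6$. This contributes $3$ to the conditional expectation.
\item If $v(t)=m\ge2$ (conditional probability $2^{-m-1}$), then $e=7+m$ and $W=4e-22=4m+6$. The elementary series identities $\sum_{m\ge2}m2^{-m-1}=\tfrac34$ and $\sum_{m\ge2}2^{-m-1}=\tfrac14$ give $4\cdot\tfrac34+6\cdot\tfrac14=\tfrac92$.
\item If $v(t)=1$ (conditional probability $\tfrac14$), then $e=9+n$ with $n=v(r)$. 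By Lemma~\ref{lem:dist}(ii), $n$ has law $2^{-n-1}$ given $v(t)=1$. Here $W=4n+14$ and $\mathbb{E}[n]=1$, so the conditional mean of $W$ is $18$ and the contribution is $\tfrac14\cdot18=\tfrac92$.
\end{enumerate}
Hence the conditional expectation of $W$ given $\lambda=2$ is $3+\tfrac92+\tfrac92=12$. Weighting by $\tfrac18$, the case $\lambda=2$ contributes $\tfrac32$.

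\textbf{Conclusion and main obstacle.} Adding the two steps gives $\tfrac32+\tfrac32=3$. The only delicate point is making sure the subcases $v(t)=1$ and $v(t)\ge2$ are disjoint and jointly exhaust $e\ge9$. Equivalently, one must check that the conditional law of $r$ is again Haar when $v(t)=1$. This is exactly the affine-bijection argument of Lemma~\ref{lem:dist}(ii) applied to the map $t=\pi s\mapsto r=(s/i-1)/\pi$, which sends the class $\{v(t)=1\}$ affinely onto $\Z_2[i]$ and carries normalised Haar measure to normalised Haar measure. I would verify this explicitly before summing the series.
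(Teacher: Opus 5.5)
Your proposal is correct and follows the paper's proof essentially line by line. The strata $\lambda=0,1,\ge3$ each contribute $\tfrac12$; conditioning on $\lambda=2$ and splitting by $v(t)=0$, $v(t)\ge2$, $v(t)=1$ gives $3+\tfrac92+\tfrac92=12$, hence $\tfrac32$, while your remarks on the null set $\{\mu=\pm2\}$ and the affine bijection $t\mapsto r$ only make explicit what the paper leaves to Lemma~\ref{lem:dist}(ii).
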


\begin{proof}
By Lemma~\ref{lem:e} the strata $\lambda=0,1,\ge3$ give $e=0,2,4$ respectively, with
weights $W(0)=1$, $W(2)=2$, $W(4)=4$ and probabilities $\tfrac12,\tfrac14,\tfrac18$ from
Lemma~\ref{lem:dist}(i). It remains to compute $\mathbb E_\Pb[W\mid\lambda=2]$. By
Lemma~\ref{lem:dist}(ii) and the proof of Lemma~\ref{lem:e},
\begin{align}
e=\begin{cases}
6, & v(t)=0 \quad (\text{prob. } \tfrac12),\\[2pt]
7+m, & v(t)=m\ge2 \quad (\text{prob. } 2^{-m-1}),\\[2pt]
9+n, & v(t)=1,\ v(r)=n \quad (\text{prob. } \tfrac14\cdot2^{-n-1}).
\end{cases}
\end{align}
Using \eqref{eq:Wvalues}, so that $W(6)=6$ and $W(e)=4e-22$ for $e\ge9$ (note $W(7+m)$
is used only for $m\ge2$, i.e. $e\ge9$, and $W(9+n)$ for $n\ge0$), together with
$\sum_{m\ge0}2^{-m-1}=1$ and $\sum_{m\ge0}m\,2^{-m-1}=1$,
\begin{align}
\mathbb{E}_\Pb\bigl[W\mid\lambda=2\bigr]
&=\underbrace{\tfrac12\cdot 6}_{v(t)=0}
+\underbrace{\sum_{m\ge2}2^{-m-1}\,W(7+m)}_{v(t)\ge2}
+\underbrace{\tfrac14\sum_{n\ge0}2^{-n-1}\,W(9+n)}_{v(t)=1}\notag\\
&=3+\sum_{m\ge2}2^{-m-1}(4m+6)+\tfrac14\sum_{n\ge0}2^{-n-1}(4n+14).
\end{align}
For the middle sum, $\sum_{m\ge2}2^{-m-1}=\tfrac14$ and
$\sum_{m\ge2}m\,2^{-m-1}=1-\tfrac12\cdot\tfrac12=\tfrac34$, so it equals
$4\cdot\tfrac34+6\cdot\tfrac14=\tfrac92$. For the last sum,
$\tfrac14\bigl(4\cdot1+14\cdot1\bigr)=\tfrac14\cdot18=\tfrac92$. Hence
$\mathbb E_\Pb[W\mid\lambda=2]=3+\tfrac92+\tfrac92=12$, and therefore
\begin{align}
\mathbb{E}_\Pb[W]=\tfrac12\cdot1+\tfrac14\cdot2+\tfrac18\cdot4+\tfrac18\cdot12
=\tfrac12+\tfrac12+\tfrac12+\tfrac32=3.
\end{align}
\end{proof}

Thus the average weight over the ramified place is $3$ times the weight $W(0)=1$ carried by the odd-$c$ matrices.

The following evaluates the additive restricted divisor sums over arithmetic progressions. This will be used in the next section to remove weights from divisor sums. 

\begin{lemma}\label{lem:equi}
There is an absolute constant $C_0$ such that, for every $n\ge1$ and every $a\in\Z[i]$
subject to
\begin{equation}\label{eq:equihyp}
v(a-2)<n\qquad\text{and}\qquad v(a+2)<n,
\end{equation}
\begin{align}
\sum_{\substack{N\mu\le T\\ \mu\equiv a\ (\mathrm{mod}\ \pi^{n})}}
\tau^{*}(\mu-2)\tau^{*}(\mu+2)
=\frac{1}{2^{n}}\cdot\frac{\pi}{8G}\,T(\log T)^{2}+O\!\left(C_0\,T\log T\right),
\label{eq:equi}
\end{align}
where the implied constant is \emph{absolute}: it does not depend on $n$ or on $a$.
\end{lemma}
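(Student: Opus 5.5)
The plan is to rerun the proof of \propref{prop:main} (that is, Ingham's quadruple argument of Theorem~\ref{thm:ingham} with odd divisors), adding the congruence $\mu\equiv a\pmod{\pi^{n}}$ as an extra linear constraint on the parameter $h$. We expand
\[
\tau^*(\mu-2)\tau^*(\mu+2)=\sum_{\dd_1\mid(\mu-2)}\sum_{\dd_2\mid(\mu+2)}1
\]
over odd ideals. As in step (i) of \propref{prop:main}, only coprime odd pairs $(\dd_1,\dd_2)$ contribute. For such a pair the conditions
\[
\mu\equiv2 \ (\mathrm{mod}\ \dd_1),\qquad \mu\equiv-2\ (\mathrm{mod}\ \dd_2),\qquad \mu\equiv a\ (\mathrm{mod}\ \pi^n)
\]
are imposed modulo pairwise coprime moduli. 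By the Chinese Remainder Theorem they cut out a single residue class modulo $\dd_1\dd_2\pi^{n}$.

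\textbf{Step 1 (counting $\mu$ for fixed divisors).} With $\mu$ confined to one residue class modulo $\dd_1\dd_2\pi^{n}$, Lemma~\ref{lem:lattice} (with $\gamma$ a generator of $\dd_1\dd_2\pi^n$) counts the $\mu$ with $N\mu\le T$ as
\[
\frac{\pi T}{2^{n}N\dd_1N\dd_2}+O\!\left(\frac{\sqrt T}{2^{n/2}\sqrt{N\dd_1N\dd_2}}+1\right).
\]
This is exactly the unrestricted count multiplied by $2^{-n}$, with an error no larger than before.

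\textbf{Step 2 (the symmetry trick).} To keep the divisor ranges short we use the $P,Q,R$ switch of Theorem~\ref{thm:ingham}: each divisor pair is replaced by the complementary one when $N\dd_1N\dd_2>\sqrt{X}$, roughly. Hypothesis \eqref{eq:equihyp} is what makes this legitimate. Since $v(a\mp2)<n$, the congruence class of $\mu$ modulo $\pi^n$ fixes the $2$-adic valuations of $\mu\mp2$ exactly. Consequently the odd cofactors $(\mu\mp2)/\dd_i$, which are determined up to the fixed power of $\pi$, satisfy the same kind of constraint. The switched quadruples therefore again lie in a single class modulo $\pi^n$, and the contribution $Q$ equals the contribution $P$ with the same factor $2^{-n}$.

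\textbf{Step 3 (assembling the main term).} Summing the main term of Step 1 over coprime odd $\dd_1,\dd_2$ with $N\dd_1N\dd_2\le X$ gives the factors (ii) and (iii) of \propref{prop:main} unchanged. The result is $2^{-n}\cdot\frac{\pi}{8G}T(\log T)^2+O(2^{-n}T\log T)$.

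\textbf{Step 4 (uniform error terms).} The error terms are controlled exactly as $E$ and $R$ were, using Lemma~\ref{lem:counts}(3). The term $\sqrt{T}\,2^{-n/2}\sum (N\dd_1N\dd_2)^{-1/2}$ is $\ll T\log T$. The ``$+1$'' terms sum to $\#\{N\dd_1N\dd_2\le X\}\ll T\log T$. Neither bound depends on $n$ or $a$, which gives the absolute constant $C_0$.

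The main obstacle is Step 2: showing that the $R$ (overlap) estimate and the $P\leftrightarrow Q$ symmetry survive the congruence condition with constants uniform in $n$. If the switching argument becomes awkward, the fallback is to bound the overlap region crudely by the unrestricted $R\ll T\log T$. This costs only $O(T\log T)$, and $O(T\log T)$ is already absolute.
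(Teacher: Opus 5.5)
Your proposal is correct and is essentially the paper's argument. You expand both $\tau^{*}$'s over coprime odd ideals, use the Chinese Remainder Theorem to reduce to one class modulo $\dd_1\dd_2\pi^{n}$, and apply Lemma~\ref{lem:lattice} to get exactly $2^{-n}$ times the unrestricted main term. You then note that the error sums $\sqrt{T}\,2^{-n/2}\sum(N\dd_1N\dd_2)^{-1/2}$ and $\#\{N\dd_1N\dd_2\le X\}$ are $O(T\log T)$ independently of $n$ and $a$; you are in fact more explicit than the paper about the hyperbola truncation.

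One correction to Step~2. Let $\mathfrak m_1,\mathfrak m_2$ be the odd parts of $(\mu-2),(\mu+2)$. The switch $(\dd_1,\dd_2)\mapsto(\mathfrak m_1\dd_1^{-1},\mathfrak m_2\dd_2^{-1})$ acts on the divisor pairs of a fixed $\mu$, so $P=Q$ holds with no hypothesis at all. What \eqref{eq:equihyp} actually buys is that $v(\mu\mp2)=v(a\mp2)$ is constant on the class. The overlap condition defining $R$ then becomes the polynomial condition $|\mu-2|\,|\mu+2|\le\bigl(2^{v(a-2)+v(a+2)}X\,N\dd_1N\dd_2\bigr)^{1/2}$. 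Ingham's two-disc argument gives $\ll 2^{(v(a-2)+v(a+2))/2-n}\sqrt{X/(N\dd_1N\dd_2)}+1$ solutions, and $2^{(v(a-2)+v(a+2))/2-n}\le\tfrac12$, so $R\ll T\log T$ uniformly. This is the right way to close your ``main obstacle''. Your fallback would instead need a bound for the unrestricted $\tau^{*}$ overlap, where $v(\mu\mp2)$ varies and Ingham's argument does not apply verbatim.
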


\begin{proof}
The argument proving Theorem~\ref{thm:ingham} can be easily adapted to this case. In fact, inserting the extra congruence $\mu\equiv a\ (\mathrm{mod}\ \pi^{n})$, opening both $\tau^{*}$'s over odd ideals
$\dd_{1},\dd_{2}$, the inner quantity is the number of $\mu$ in the disc $N\mu\le T$
lying in a single prescribed class modulo $\mathfrak q:=\dd_{1}\dd_{2}\pi^{n}$ by the Chinese Remainder Theorem. By Lemma~\ref{lem:lattice} (lattice points in a disc), this count is
\begin{align}
\frac{\pi T}{2^{n}N\dd_1N\dd_2}
+O\!\left(\frac{\sqrt{T}}{\sqrt{2^{n}N\dd_1N\dd_2}}+1\right).
\label{eq:innercount}
\end{align}
The main term is exactly $2^{-n}$ times that of the unrestricted problem, and summing it
over $\dd_1,\dd_2$ reproduces $2^{-n}\cdot\frac{\pi}{8G}T(\log T)^2$ as in the proof of
Theorem~\ref{thm:ingham}. It remains to bound the two error contributions in
\eqref{eq:innercount} summed over the admissible range $N\dd_1,N\dd_2\le\sqrt T$.
 
The odd ideals $\dd_1,\dd_2$ are unconstrained by the congruence
$\mu\equiv a\ (\mathrm{mod}\ \pi^{n})$, since they are coprime to $\pi$.
Hence they range over exactly the same set as in the unrestricted problem. 
Their number is
$\ll\#\{(\dd_1,\dd_2):N\dd_1N\dd_2\le T\}\ll T\log T$ by Lemma~\ref{lem:counts}(3).
Therefore the sum over the last error term $O(1)$ contributes $O(T\log T)$ with an absolute constant
independent of $n$ .

For the first $O$-term, we observe
\begin{align}
\frac{\sqrt T}{2^{n/2}}\sum_{N\dd_1,N\dd_2\le\sqrt T}\frac{1}{\sqrt{N\dd_1N\dd_2}}
\ll\frac{\sqrt T}{2^{n/2}}\cdot\sqrt T\log T=\frac{1}{2^{n/2}}\,T\log T
\ll T\log T.
\end{align}

Both error contributions are thus $O(T\log T)$ with an implied constant independent of
$n$ and $a$, proving \eqref{eq:equi}.
\end{proof}

\begin{rk}\label{rk:equihyp}
Hypothesis \eqref{eq:equihyp} cannot be omitted, without it the conclusion is false; for instance for $a=2$ and
$n\ge5$ the left-hand side of \eqref{eq:equi} equals
$\sum_{N\gamma\le T/2^{n}+O(\sqrt T)}\tau^{*}(\gamma)\,\tau(1-\pi^{n-4}\gamma)$, whose main
term is smaller by a factor $\tfrac12$ because
$\sum_{\mathfrak n}\tau^{*}(\mathfrak n)N\mathfrak n^{-s}=(1-2^{-s})\zeta_K(s)^{2}$ while
$\sum_{\mathfrak n}\tau(\mathfrak n)N\mathfrak n^{-s}=\zeta_K(s)^{2}$.
\end{rk}

\medskip
We shall see later that the terms of the divisir sum with $v(\mu)=2$ requires careful treatment. We introduce the following parametrisation. If $v(\mu)=2$, then $\mu=2\nu$ with $\nu$ odd, hence
$\nu\equiv1\pmod\pi$, and $\mu$ determines a unique $t\in\Z[i]$ with
$\nu=1+\pi t$; conversely every $t\in\Z[i]$ arises in this way. Since
$2=-i\pi^{2}$ we have $\mu-2=2\pi t$ and $\mu+2=2(2+\pi t)=2\pi(t-i\pi)$, so
that
\begin{align}
\tau^{*}(\mu-2)=\tau^{*}(t),\qquad
\tau^{*}(\mu+2)=\tau^{*}(t-i\pi),\qquad
e(\mu)=6+v(t)+v(t-i\pi),
\label{eq:param}
\end{align}
and $N\mu\le T$ forces $Nt\le T$ for all $T$ large enough.

\begin{lemma}\label{lem:levels}
Let $e\ge6$. The level set $L_{e}:=\{\mu\in\Z[i]:v(\mu)=2,\ e(\mu)=e\}$ is empty
unless $e=6$ or $e\ge9$; it is a single residue class modulo $\pi^{4}$ when
$e=6$, and a union of exactly two residue classes modulo $\pi^{e-3}$ when
$e\ge9$. Consequently
\begin{align}
q_{e}:=\Pb(L_{e})=
\begin{cases}
2^{-4}, & e=6,\\
2^{4-e}, & e\ge9,
\end{cases}
\label{eq:qe}
\end{align}
and, with $W$ as in \eqref{eq:Wvalues},
\begin{align}
\sum_{e\ge6}W(e)\,q_{e}=\frac32,
\qquad
\sum_{e>E}W(e)\,q_{e}\ll E\,2^{-E}\quad(E\ge9).
\label{eq:series}
\end{align}
\end{lemma}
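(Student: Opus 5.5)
We work entirely in the parametrisation \eqref{eq:param}. For $\mu$ with $v(\mu)=2$ we write $\mu=2(1+\pi t)$, and the map $\mu\mapsto t$ is an affine bijection from the class $\mu\equiv 2\pi^{2}\cdot(\text{unit})$, i.e. $v(\mu)=2$, onto $\Z[i]$. By \eqref{eq:param} we have $e(\mu)=6+v(t)+v(t-i\pi)$, so describing $L_e$ reduces to describing the set of $t$ with prescribed $v(t)+v(t-i\pi)=e-6$. We then pull this back through the affine map: a residue class of $t$ modulo $\pi^{m}$ corresponds to a residue class of $\mu$ modulo $\pi^{m+3}$, because $\mu=2+2\pi t=2-i\pi^{3}t$.

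\emph{Key steps.} We follow the case split in the proof of Lemma~\ref{lem:e}.
\begin{itemize}
\item[(1)] If $v(t)=0$, then $v(t-i\pi)=0$ and $e=6$. The condition $v(t)=0$ is the single class $t\equiv 1\pmod{\pi}$, which corresponds to a single class of $\mu$ modulo $\pi^{4}$. Its measure is $2^{-4}$.
\item[(2)] If $v(t)=m\ge2$, then $v(t-i\pi)=1$ and $e=7+m$. The condition $v(t)=m$ is one class modulo $\pi^{m+1}$, namely $t\equiv\pi^{m}\pmod{\pi^{m+1}}$.
\item[(3)] If $v(t)=1$, we write $t=\pi s$ and $s=i(1+\pi r)$. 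Then $e=9+v(r)$, and $v(r)=n$ is one class of $r$ modulo $\pi^{n+1}$. This gives one class of $s$ modulo $\pi^{n+2}$ and one class of $t$ modulo $\pi^{n+3}$.
\end{itemize}
For a given $e\ge9$, exactly two branches contribute: branch (2) with $m=e-7\ge2$, and branch (3) with $n=e-9\ge0$. These give one class of $t$ modulo $\pi^{e-6}$ and one class modulo $\pi^{e-6}$ respectively, so each yields one class of $\mu$ modulo $\pi^{e-3}$. The two classes are distinct because $v(t)$ differs between them. The values $e=7,8$ would need $m=0,1$ in branch (2), but those values of $m$ belong to the other branches, so $L_7=L_8=\emptyset$. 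Each class modulo $\pi^{e-3}$ has measure $2^{3-e}$, so $q_e=2\cdot2^{3-e}=2^{4-e}$. As a sanity check, $q_6+\sum_{e\ge9}q_e=\tfrac1{16}+\tfrac1{16}=\tfrac18=\Pb(\lambda=2)$.

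\emph{Series.} With $W(6)=6$ and $W(e)=4e-22$ for $e\ge9$ (Proposition~\ref{prop:W}), we get
\[
\sum_{e\ge6}W(e)q_e=\tfrac{6}{16}+\sum_{e\ge9}(4e-22)2^{4-e}.
\]
Substituting $e=9+n$, the tail becomes $\tfrac1{32}\sum_{n\ge0}(4n+14)2^{-n}=\tfrac1{32}(8+28)=\tfrac98$. The total is therefore $\tfrac38+\tfrac98=\tfrac32$, which agrees with $\tfrac18\cdot12$ from Proposition~\ref{prop:EW}. For the tail bound, $\sum_{e>E}(4e-22)2^{4-e}\ll\sum_{e>E}e\,2^{-e}\ll E2^{-E}$ by comparison with a geometric series.

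\emph{Main obstacle.} The only delicate point is the bookkeeping of moduli through the change of variables. We have to confirm that "$v(t)=m$" is exactly one class modulo $\pi^{m+1}$ and not a union of classes; this holds because the residue field is $\mathbb F_2$, so the only nonzero residue modulo $\pi$ is $1$. We also have to confirm that the shift $\mu=2-i\pi^{3}t$ raises the modulus by exactly $3$. Both facts are immediate from $N\pi=2$, and the rest is the arithmetic above.
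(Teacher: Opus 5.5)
Your proposal is correct and follows essentially the same route as the paper. It uses the same three-way split on $v(t)$ (with $t=i\pi(1+\pi r)$ when $v(t)=1$), the same observation that $\mu=2-i\pi^{3}t$ turns a class of $t$ modulo $\pi^{k}$ into a class of $\mu$ modulo $\pi^{k+3}$, and the same substitution $e=9+n$ in the series; the one slip is the phrase ``$\mu\equiv 2\pi^{2}\cdot(\text{unit})$'', which should read $\mu=2\nu$ with $\nu$ odd (equivalently $\mu\equiv 2\pmod{\pi^{3}}$), since $v(2\pi^{2})=4$, though nothing in your argument depends on it.
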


\begin{proof}
By \eqref{eq:param} and the proof of Lemma~\ref{lem:e}, we get: $v(t)=0$ implies $e=6$; $v(t)=m\ge2$ implies $e=7+m$; and
$v(t)=1$, in which case $t=i\pi(1+\pi r)$ for a unique $r\in\Z[i]$, gives
$e=9+v(r)$. Each of the conditions $v(t)=0$, $v(t)=m$ and
$\bigl(v(t)=1,\ v(r)=n\bigr)$ describes a single residue class of $t$ modulo
$\pi$, $\pi^{m+1}$ and $\pi^{n+3}$ respectively. Since $\mu=2+2\pi t=2-i\pi^{3}t$,
a residue class of $t$ modulo $\pi^{k}$ is a residue class of $\mu$ modulo
$\pi^{k+3}$; this yields the stated moduli $\pi^{4}$ for $e=6$ and $\pi^{e-3}$
for $e=7+m\ge9$ and for $e=9+n$, the two branches contributing one class each
when $e\ge9$. Taking Haar measures gives \eqref{eq:qe}.

For \eqref{eq:series}, write $e=9+n$ and use $W(6)=6$, $W(e)=4e-22$ for $e\ge9$:
\[
\sum_{e\ge6}W(e)q_{e}
=\frac{6}{16}+\frac{1}{32}\sum_{n\ge0}(4n+14)2^{-n}
=\frac38+\frac{36}{32}=\frac32 ,
\]
and the tail estimate is immediate from $W(e)q_{e}\ll e\,2^{-e}$.
\end{proof}

The next two lemmas supply the uniform upper bound needed to bound the tail of the sum over $e$.

\begin{lemma}\label{lem:tausq}
For $x\ge2$,
\[
\sum_{0<N\nu\le x}\tau(\nu)^{2}\ \ll\ x(\log x)^{3},
\]
the implied constant being absolute.
\end{lemma}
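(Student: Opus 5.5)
The plan is to obtain the bound $\sum_{N\nu\le x}\tau(\nu)^2\ll x(\log x)^3$ from Dirichlet series and elementary hyperbola-type counting over ideals of $\Z[i]$. Because the sum runs over elements $\nu$ while $\tau$ depends only on the ideal $(\nu)$, it is exactly $4$ times the corresponding sum over nonzero ideals $\mathfrak n$ with $N\mathfrak n\le x$. So it suffices to prove
\[
\sum_{N\mathfrak n\le x}\tau(\mathfrak n)^2\ll x(\log x)^3 .
\]

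The first step is a convolution identity for $\tau^2$. Since $\tau$ is multiplicative, we have $\tau(\p^a)^2=(a+1)^2$. On the Dirichlet series side,
\[
\sum_{\mathfrak n}\tau(\mathfrak n)^2N\mathfrak n^{-s}=\zeta_K(s)^4/\zeta_K(2s).
\]
Equivalently, as an elementary identity, $\tau^2=\tau_4*\beta$, where $\beta$ is supported on squares and is given by $\beta(\mathfrak m^2)=\mu_K(\mathfrak m)$. I would verify this locally at each prime, where it reduces to the check $\sum_{a\ge0}(a+1)^2X^a=(1+X)/(1-X)^3=(1-X^2)/(1-X)^4$.

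To avoid convolution entirely, it is simpler to use the pointwise bound $\tau(\mathfrak n)^2\le\tau_4(\mathfrak n)$. This holds because, for every $a\ge0$,
\[
(a+1)^2\le\binom{a+3}{3}.
\]
That inequality is elementary: it holds at $a=0,1,2$, and for larger $a$ the right side grows cubically.

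It then remains to show $\sum_{N\mathfrak n\le x}\tau_4(\mathfrak n)\ll x(\log x)^3$. This is the number of ordered quadruples of ideals $(\mathfrak a_1,\dots,\mathfrak a_4)$ with $N(\mathfrak a_1\mathfrak a_2\mathfrak a_3\mathfrak a_4)\le x$. I would bound it by induction on the number of factors, starting from Lemma~\ref{lem:counts}(1), which gives $\#\{\mathfrak a:N\mathfrak a\le y\}\ll y$ for $y\ge1$, and using Lemma~\ref{lem:counts}(2), which gives $\sum_{N\mathfrak a\le y}N\mathfrak a^{-1}\ll\log y+1$. Suppose $\#\{(\mathfrak a_1,\dots,\mathfrak a_r):N(\prod\mathfrak a_i)\le y\}\ll y(\log y+1)^{r-1}$. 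Summing over $\mathfrak a_{r+1}$ with $N\mathfrak a_{r+1}\le y$ then gives
\[
\ll\sum_{N\mathfrak a\le y}\frac{y}{N\mathfrak a}(\log y+1)^{r-1}\ll y(\log y+1)^r .
\]
Taking $r=4$ and $y=x\ge2$ yields $x(\log x)^3$ with an absolute constant, since no parameters appear.

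The only step needing care is the pointwise inequality $(a+1)^2\le\binom{a+3}{3}$. Checking it directly: at $a=3$ we get $16\le20$, and in general $\binom{a+3}{3}-(a+1)^2=(a+1)\bigl[(a+2)(a+3)-6(a+1)\bigr]/6=(a+1)a(a-1)/6\ge0$. With that verified, the whole argument is routine and no serious obstacle is expected. The element-versus-ideal factor of $4$ and the lower range of $x$ are harmless.
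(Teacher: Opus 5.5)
Your proposal is correct and follows the paper's proof: you use the pointwise bound $\tau^{2}\le\tau_{4}$ coming from $(a+1)^{2}\le\binom{a+3}{3}$, which you verify explicitly via $\binom{a+3}{3}-(a+1)^{2}=(a+1)a(a-1)/6\ge0$, and then iterate Lemma~\ref{lem:counts}(1),(2) over the factors of $\tau_{4}$ to get $\sum_{N\nu\le x}\tau_{4}(\nu)\ll x(\log x)^{3}$. The Dirichlet-series identity $\zeta_K(s)^4/\zeta_K(2s)$ that you mention along the way is correct but is not needed for the argument.
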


\begin{proof}
Both $\tau^{2}$ and the fourfold divisor function $\tau_{4}$ are multiplicative,
and on prime powers $(a+1)^{2}\le\binom{a+3}{3}$ for every $a\ge0$; hence
$\tau^{2}\le\tau_{4}$. Writing $\tau_{4}=\tau_{3}*\mathbf 1$ and iterating
Lemma~\ref{lem:counts}(1),(2) gives $\sum_{N\nu\le x}\tau_{4}(\nu)\ll x(\log
x)^{3}$.
\end{proof}

\begin{lemma}\label{lem:tail}
For every integer $E\ge9$ and every $T\ge3$,
\begin{align}
0\ \le\!\!\sum_{\substack{N\mu\le T,\ v(\mu)=2\\ e(\mu)>E}}\!\!
W\bigl(e(\mu)\bigr)\,\tau^{*}(\mu-2)\,\tau^{*}(\mu+2)
\ \ll\ E\,2^{-E/2}\,T(\log T)^{3},
\label{eq:tail}
\end{align}
with an absolute implied constant.
\end{lemma}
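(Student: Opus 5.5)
We have to bound the tail sum over $\mu$ with $v(\mu)=2$ and $e(\mu)>E$. The plan is to decompose it by the value of $e$, apply Cauchy--Schwarz on each level set $L_e$ of Lemma~\ref{lem:levels}, and sum the results over $e>E$.

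\emph{Step 1: decompose by level.} The left side of \eqref{eq:tail} equals
\[
\sum_{e>E}W(e)\,\Sigma_e,\qquad \Sigma_e:=\sum_{\mu\in L_e,\ N\mu\le T}\tau^*(\mu-2)\tau^*(\mu+2).
\]
Nonnegativity is clear, since $W\ge0$ and $\tau^*\ge0$.

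\emph{Step 2: Cauchy--Schwarz on each level.} By Lemma~\ref{lem:levels}, for $e\ge9$ the set $L_e$ is a union of two residue classes modulo $\pi^{e-3}$. Cauchy--Schwarz gives
\[
\Sigma_e\le \#\{\mu\in L_e:N\mu\le T\}^{1/2}\Bigl(\sum_{N\mu\le T}\tau^*(\mu-2)^2\tau^*(\mu+2)^2\Bigr)^{1/2}.
\]
\begin{itemize}
\item The first factor is controlled by Lemma~\ref{lem:lattice}. It is at most
\[
2\bigl(\pi T\,2^{3-e}+O(\sqrt{T}\,2^{-(e-3)/2}+1)\bigr)\ll 2^{-e}T+1.
\]
\item For the second factor, use $\tau^*\le\tau$ and $ab\le\tfrac12(a^2+b^2)$ to get $\tau(\mu-2)^2\tau(\mu+2)^2\le \tfrac12(\tau(\mu-2)^4+\tau(\mu+2)^4)$.
\end{itemize}

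Here is the main obstacle. Lemma~\ref{lem:tausq} controls only $\tau^2$, not $\tau^4$, and a naive $\tau^4$ bound gives $T(\log T)^{15}$, far worse than the claimed $(\log T)^3$.

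\emph{Step 2 (repaired): exploit the parametrisation.} The better route is to use \eqref{eq:param}. On $L_e$ we have $\tau^*(\mu-2)=\tau^*(t)$ and $\tau^*(\mu+2)=\tau^*(t-i\pi)$, with $\mu=2-i\pi^3 t$ and $Nt\ll T$. So $\Sigma_e$ becomes a sum over $t$ restricted to residue classes of modulus $\pi^{e-6}$. Apply Cauchy--Schwarz in the form
\[
\Sigma_e\le\Bigl(\sum_{t}\tau^*(t)^2\Bigr)^{1/2}\Bigl(\sum_{t}\tau^*(t-i\pi)^2\Bigr)^{1/2},
\]
with both sums over $t$ in the relevant classes and $Nt\ll T$.

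Each restricted second-moment sum is then bounded. The class constraint forces $v(t)$ or $v(t-i\pi)$ to be large:
\begin{itemize}
\item In the branch $v(t)=m=e-7$, write $t=\pi^m t'$. Since $\tau^*(t)=\tau^*(t')$ and $Nt'\le T2^{-m}$, Lemma~\ref{lem:tausq} gives $\sum\tau^*(t)^2\ll 2^{-e}T(\log T)^3$.
\item The companion factor $\sum\tau^*(t-i\pi)^2$ is bounded trivially over all $t$ with $Nt\ll T$, by Lemma~\ref{lem:tausq}, giving $\ll T(\log T)^3$.
\item The branch $v(t)=1$ with large $v(r)$ is handled the same way. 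There $t-i\pi=i\pi^2 r$ with $v(r)=e-9$, and one substitutes $r=\pi^{e-9}r'$ in the second factor.
\end{itemize}
In either branch the product of the two factors is $\ll 2^{-e}T^2(\log T)^6$. Taking the square root gives
\[
\Sigma_e\ll 2^{-e/2}\,T(\log T)^3,
\]
uniformly in $e$.

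\emph{Step 3: sum over $e$.} Using $W(e)=4e-22\ll e$ from Proposition~\ref{prop:W},
\[
\sum_{e>E}e\,2^{-e/2}\ll E\,2^{-E/2}.
\]
This yields \eqref{eq:tail}.

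Only finitely many $e$ have nonempty contribution, because $e\ll\log T$. This is convenient to note, but it is not needed for the bound.
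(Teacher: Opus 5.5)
Your repaired Step 2 is correct and is essentially the paper's proof: you pass to the parameter $t$ via \eqref{eq:param} and split the levels $e>E$ into the family $v(t)=e-7$ and the family $v(t)=1$, $v(r)=e-9$. You then apply Cauchy--Schwarz so that one factor runs over the rescaled variable $t/\pi^{m}$ (resp.\ $r/\pi^{n}$), of norm $\ll T2^{-e}$, while the other is bounded by the full second moment of Lemma~\ref{lem:tausq} via an injection, and finally sum against $W(e)\ll e$; your first attempt via the lattice count and the fourth moment of $\tau$ is rightly discarded, and the final argument does not depend on it.
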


\begin{proof}
By \eqref{eq:param} and the proof of Lemma~\ref{lem:levels}, the terms with
$e(\mu)>E$ split into the family $v(t)=m$ with $m\ge E-6$, on which $e=7+m$, and
the family $v(t)=1$, $v(r)=n$ with $n\ge E-8$, on which $e=9+n$.

Consider the first family and fix $m\ge E-6\ge3$. Write $t=\pi^{m}u$ with $u$
odd, so that $\tau^{*}(t)=\tau(u)$; since $m\ge2$ the element
$\pi^{m-1}u-i$ is odd and $t-i\pi=\pi(\pi^{m-1}u-i)$, whence
$\tau^{*}(t-i\pi)=\tau(\pi^{m-1}u-i)$. The constraint $Nt\le T$ reads
$Nu\le T2^{-m}$. The map $u\mapsto\pi^{m-1}u-i$ is injective and
$N(\pi^{m-1}u-i)\ll T$ on this range, so Cauchy--Schwarz and
Lemma~\ref{lem:tausq} give
\[
\sum_{\substack{Nt\le T\\ v(t)=m}}\tau^{*}(t)\,\tau^{*}(t-i\pi)
\le\Bigl(\sum_{Nu\le T2^{-m}}\!\!\tau(u)^{2}\Bigr)^{1/2}
\Bigl(\sum_{Nu\le T2^{-m}}\!\!\tau(\pi^{m-1}u-i)^{2}\Bigr)^{1/2}
\ll\frac{T}{2^{m/2}}(\log T)^{3}.
\]
Since $W(7+m)=4m+6$, summing over $m\ge E-6$ contributes
$\ll E\,2^{-E/2}T(\log T)^{3}$.

For the second family fix $n\ge E-8$ and write $t=i\pi(1+\pi r)$ with
$r=\pi^{n}w$, $w$ odd. Then $\tau^{*}(t)=\tau(1+\pi^{n+1}w)$ and
$t-i\pi=i\pi^{2}r$, so $\tau^{*}(t-i\pi)=\tau(w)$, while $Nt\le T$ forces
$Nw\ll T2^{-n}$. The same application of Cauchy--Schwarz and
Lemma~\ref{lem:tausq}, now with the injection $w\mapsto1+\pi^{n+1}w$, bounds the
inner sum by $\ll T2^{-n/2}(\log T)^{3}$; as $W(9+n)=4n+14$, summing over
$n\ge E-8$ again contributes $\ll E\,2^{-E/2}T(\log T)^{3}$.
\end{proof}

\subsection*{Proof of Theorem \ref{thm:BT}}
By the identity \eqref{eq:identity},
\begin{align}
\bigl|\B_{T}\bigr|
=4\sum_{\substack{4<N\mu\le T}}W\bigl(e(\mu)\bigr)\,
\tau^{*}(\mu-2)\,\tau^{*}(\mu+2),
\end{align}
and we evaluate the right-hand side by splitting the range of $\mu$ according to
$\lambda=v(\mu)$:
\begin{align}
\sum_{\mu}=\sum_{v(\mu)=0}+\sum_{v(\mu)=1}+\sum_{v(\mu)\ge3}+\sum_{v(\mu)=2}.
\label{eq:split4}
\end{align}
By Lemma~\ref{lem:e} the weight $W(e(\mu))$ is constant on the first three
ranges, equal to $W(0)=1$, $W(2)=2$ and $W(4)=4$ respectively; only the range
$v(\mu)=2$ carries a weight that varies, over infinitely many levels.

\medskip
\emph{Step 1: the ranges of constant weight.}
The sets $\{v(\mu)=0\}$, $\{v(\mu)=1\}$ and $\{v(\mu)\ge3\}$ are unions of
residue classes modulo $\pi$, $\pi^{2}$ and $\pi^{3}$, of $\Pb$-measure
$\tfrac12$, $\tfrac14$ and $\tfrac18$ by Lemma~\ref{lem:dist}(i). Applying
Lemma~\ref{lem:equi} to each class and summing over the classes in each range,
\begin{align}
\sum_{v(\mu)=0}\!\!\tau^{*}(\mu-2)\tau^{*}(\mu+2)
&=\tfrac12\cdot\tfrac{\pi}{8G}T(\log T)^{2}+O(T\log T),\\
\sum_{v(\mu)=1}\!\!\tau^{*}(\mu-2)\tau^{*}(\mu+2)
&=\tfrac14\cdot\tfrac{\pi}{8G}T(\log T)^{2}+O(T\log T),\\
\sum_{v(\mu)\ge3}\!\!\tau^{*}(\mu-2)\tau^{*}(\mu+2)
&=\tfrac18\cdot\tfrac{\pi}{8G}T(\log T)^{2}+O(T\log T).
\end{align}
Weighting by $W(0)$, $W(2)$, $W(4)$, these three ranges contribute to
$\tfrac14|\B_{T}|$ the amount
\begin{align}
\Bigl(1\cdot\tfrac12+2\cdot\tfrac14+4\cdot\tfrac18\Bigr)\frac{\pi}{8G}T(\log T)^{2}
+O(T\log T)
=\tfrac32\cdot\frac{\pi}{8G}T(\log T)^{2}+O(T\log T).
\label{eq:step1}
\end{align}

\medskip
\emph{Step 2: the range $v(\mu)=2$.}
Put
\begin{align}
\Sigma_{2}(T):=\sum_{v(\mu)=2}W\bigl(e(\mu)\bigr)\,\tau^{*}(\mu-2)\tau^{*}(\mu+2)
=\sum_{e\ge6}W(e)\!\!\sum_{\substack{N\mu\le T\\ \mu\in L_{e}}}\!\!
\tau^{*}(\mu-2)\tau^{*}(\mu+2),
\label{eq:sig2}
\end{align}
with $L_{e}$ as in Lemma~\ref{lem:levels}, and let $E$ be an integer with
$E\ge9$, to be chosen at the end. We treat the levels $e\le E$ by the
equidistribution estimate and the levels $e>E$ in a single block.

By Lemma~\ref{lem:levels} each $L_{e}$ is a union of at most two residue classes
modulo a power of $\pi$, of total measure $q_{e}$; Lemma~\ref{lem:equi}, whose
error term is uniform in the modulus and the residue, therefore gives
\begin{align}
\sum_{\substack{N\mu\le T\\ \mu\in L_{e}}}\tau^{*}(\mu-2)\tau^{*}(\mu+2)
=q_{e}\,\frac{\pi}{8G}\,T(\log T)^{2}+O\bigl(T\log T\bigr).
\label{eq:inner}
\end{align}
Summing \eqref{eq:inner} over the levels $6\le e\le E$, weighted by $W(e)$, and
using $\sum_{6\le e\le E}W(e)\ll E^{2}$ together with \eqref{eq:series},
\begin{align}
\sum_{6\le e\le E}W(e)\!\!\sum_{\substack{N\mu\le T\\ \mu\in L_{e}}}\!\!
\tau^{*}(\mu-2)\tau^{*}(\mu+2)
=\tfrac32\cdot\frac{\pi}{8G}T(\log T)^{2}
+O\Bigl(E^{2}T\log T+E2^{-E}T(\log T)^{2}\Bigr).
\label{eq:head}
\end{align}
The levels $e>E$ are bounded by Lemma~\ref{lem:tail}, and combining this with
\eqref{eq:head} we obtain, uniformly in $E\ge9$,
\begin{align}
\Sigma_{2}(T)=\tfrac32\cdot\frac{\pi}{8G}T(\log T)^{2}
+O\Bigl(E^{2}\,T\log T+E2^{-E}\,T(\log T)^{2}+E2^{-E/2}\,T(\log T)^{3}\Bigr).
\label{eq:sig2asym}
\end{align}
Choose $E=\bigl\lceil6\log\log T\bigr\rceil$, so that
$2^{-E/2}\le(\log T)^{-3\log2}$ with $3\log2=2.079\ldots>2$. The three error
terms in \eqref{eq:sig2asym} are then
$O\bigl(T\log T(\log\log T)^{2}\bigr)$, $O(T)$ and
$O\bigl(T(\log T)^{0.93}\log\log T\bigr)$ respectively, whence
\begin{align}
\Sigma_{2}(T)=\tfrac32\cdot\frac{\pi}{8G}T(\log T)^{2}
+O\bigl(T\log T(\log\log T)^{2}\bigr).
\label{eq:step2}
\end{align}

\medskip
\emph{Conclusion.} Adding \eqref{eq:step1} and \eqref{eq:step2} and multiplying
by $4$,
\begin{align}
\bigl|\B_{T}\bigr|
=4\Bigl(\tfrac32+\tfrac32\Bigr)\frac{\pi}{8G}T(\log T)^{2}
+O\bigl(T\log T(\log\log T)^{2}\bigr)
=\frac{3\pi}{2G}T(\log T)^{2}+O\bigl(T\log T(\log\log T)^{2}\bigr),
\end{align}
which is the assertion of the theorem. 
\qed

\subsection*{Proof of \thmref{Rcount}}
   First we note that the $\mathcal B_T$ is the subset, taken from $\rm SL(2,\z[\textit{i}])$. That means we need to divide by $2$ for getting elements in  $\rm PSL(2,\z[\textit{i}])$. Also, by \lemref{lem: exact}, every loxodromic reversible class with infinite cyclic
centraliser contains exactly $8$ special forms. The remaining classes are those whose
centraliser is $\z\oplus\z/3\z$; by the previous lemma these are the powers of at most
three primitive elements, so only $O(\log T)$ of them have $N\mu\le T$ and they do not
affect the asymptotic. 
   Therefore, we have
   \[ |R_T|\sim \frac{|\mathcal B_T|}{8\cdot 2}=\frac{3\pi}{32G}T(\log T)^{2}+O\bigl(T\log T(\log\log T)^{2}\bigr).\]
\qed

\section*{Declarations} 

{\bf Ethical Approval}. Not applicable.

{\bf Competing Interest}. Not applicable.

{\bf Funding}. Not applicable.

{\bf Authors' Contributions.} All authors have contributed equally. 

{\bf Availability of data and materials}. Not applicable.

\medskip {AI declaration.}
The first drafts of this paper were prepared before large language model (LLM) tools became widely available. At a later stage, the authors used Claude (Anthropic) to assist in checking parts of the proofs, tightening some arguments, and correcting an error in a previous draft. All mathematical statements and proofs were independently reviewed and verified by the authors, who take full responsibility for the content of the paper.

    \bibliographystyle{plain}
	\bibliography{ref}

\end{document}